\documentclass[11pt]{article}

\usepackage[a4paper,margin=2.4cm]{geometry}
\usepackage[T1]{fontenc}
\usepackage[utf8]{inputenc}
\usepackage{lmodern}
\usepackage{microtype}
\usepackage{amsmath,amssymb,amsthm,mathtools,bm}
\usepackage{algorithm}
\usepackage{algpseudocodex}
\usepackage{booktabs}
\usepackage{tabularx}
\usepackage{pdflscape}
\usepackage{graphicx}
\usepackage{tikz}
\usetikzlibrary{arrows.meta}
\usepackage{siunitx}
\usepackage{xcolor}
\usepackage[colorlinks=true,allcolors=blue!55!black]{hyperref}
\usepackage[capitalise,noabbrev]{cleveref}
\usepackage[backend=biber,style=numeric-comp,sorting=none,giveninits=true,maxbibnames=9]{biblatex}
\newtheorem{theorem}{Theorem}
\newtheorem{proposition}{Proposition}
\newtheorem{corollary}{Corollary}
\newtheorem{lemma}{Lemma}
\theoremstyle{definition}
\newtheorem{definition}{Definition}
\newtheorem{remark}{Remark}

\newcommand{\R}{\mathbb{R}}
\newcommand{\infconv}{\mathbin{\square}}
\newcommand{\conj}[1]{#1^{*}}
\newcommand{\argminop}{\operatorname*{arg\,min}}
\newcommand{\E}{\mathbb{E}}                 %
\newcommand{\Qexp}{Q}                       %
\newcommand{\cvar}[1]{\operatorname{CVaR}_{#1}} %
\newcommand{\pkg}{\texttt{FLToptim}}

\newcommand{\annualEightNodeGurobiSpeedup}{1.50}
\newcommand{\annualGapMax}{0.1045\%}

\newcommand{\annualMemFactorThreeToTenRange}{$3.09$--$5.93$}
\newcommand{\annualMultiEnergyFltRss}{300}
\newcommand{\annualMultiEnergyFltWall}{2.45}
\newcommand{\annualMultiEnergyGap}{+0.0793\%}

\newcommand{\annualMultiEnergyGurobiRss}{680}

\newcommand{\annualMultiEnergyGurobiWall}{2.17}

\newcommand{\annualMultiEnergyHighsRss}{884}

\newcommand{\annualMultiEnergyHighsSpeedup}{6.38}
\newcommand{\annualMultiEnergyHighsWall}{15.62}

\newcommand{\annualMultiEnergyMemFactorRange}{$2.27$--$2.95$}
\newcommand{\annualOneNodeFltRss}{59}
\newcommand{\annualOneNodeGurobiSpeedup}{16.1}
\newcommand{\annualOneNodeHighsSpeedup}{17.3}
\newcommand{\annualTenNodeGurobiSpeedup}{1.77}
\newcommand{\annualThirtyNodeFltRss}{550}
\newcommand{\annualThirtyNodeFltWall}{15.23}
\newcommand{\annualThirtyNodeGap}{+0.1045\%}
\newcommand{\annualThirtyNodeGurobiMemFactor}{6.68}
\newcommand{\annualThirtyNodeGurobiRss}{3{,}673}

\newcommand{\annualThirtyNodeGurobiSpeedup}{1.07}
\newcommand{\annualThirtyNodeGurobiWall}{16.35}
\newcommand{\annualThirtyNodeHighsMemFactor}{5.92}
\newcommand{\annualThirtyNodeHighsRss}{3{,}256}

\newcommand{\annualThirtyNodeHighsWall}{195.38}

\newcommand{\annualThirtyNodeMemFactorRange}{$5.92$--$6.68$}
\newcommand{\annualThreeNodeGurobiSpeedup}{5.62}
\newcommand{\annualThreeToTenGapMax}{0.0459\%}
\newcommand{\catLawCampaignDate}{2026-08-13}
\newcommand{\catLawCampaignVersion}{0.27.0}
\newcommand{\catLawDoubleCat}{233}
\newcommand{\catLawDoubleOneDGurobiTime}{609}
\newcommand{\catLawDoubleOneDRatio}{25.0}
\newcommand{\catLawDoubleOneDTime}{24.3}
\newcommand{\catLawDoubleTwoDGurobiTime}{1.75}
\newcommand{\catLawDoubleTwoDRatio}{13.6}
\newcommand{\catLawDoubleTwoDTime}{23.8}
\newcommand{\catLawFCheckCatHigh}{488}
\newcommand{\catLawFCheckCatLow}{159}
\newcommand{\catLawFCheckMeasuredHigh}{243{,}542}
\newcommand{\catLawFCheckMeasuredLow}{25{,}760}
\newcommand{\catLawFCheckPredictedHigh}{238{,}144}
\newcommand{\catLawFCheckPredictedLow}{25{,}281}
\newcommand{\catLawLosslessCampaignDate}{2026-08-15}

\newcommand{\catLawLossyFaceRatioRange}{$13.9$--$14.8$}

\newcommand{\catLawLossyTimeFactor}{70.1}
\newcommand{\catLawMemWallCatHigh}{949}
\newcommand{\catLawMemWallCatLow}{600}

\newcommand{\catLawMemWallMachineGb}{38}

\newcommand{\catLawOneDCatMax}{488}
\newcommand{\catLawOneDCatMin}{5}
\newcommand{\catLawOneDExactnessMax}{$6.9\,10^{-15}$}

\newcommand{\catLawOneDGainRange}{$15.1$--$30.2$}
\newcommand{\catLawRefusedCat}{363}
\newcommand{\catLawRefusedSubopt}{$6.4\,10^{-9}$}
\newcommand{\catLawTwoDCatMax}{488}
\newcommand{\catLawTwoDCatMin}{159}
\newcommand{\catLawTwoDExactnessMax}{$1.1\,10^{-11}$}

\newcommand{\catLawTwoDLossRange}{$13.5$--$107$}

\newcommand{\crossEightZoneGapFactor}{0.79}
\newcommand{\crossEightZoneMonolithSeconds}{24.4}
\newcommand{\crossEightZoneNewSlopesRound}{1218}
\newcommand{\crossEightZoneRoundsExtrap}{93}
\newcommand{\crossEightZoneRoundsProbed}{11}
\newcommand{\crossEightZoneSeenOnceShare}{59.8\%}

\newcommand{\crossEightZoneUnits}{22}

\newcommand{\crossThreeZoneRounds}{9}

\newcommand{\ddimAmpFourFive}{$2.6\times$}
\newcommand{\ddimAmpThreeFour}{$2.9\times$}
\newcommand{\ddimAmpTwoThree}{$3.8\times$}
\newcommand{\ddimByteGateCells}{25}

\newcommand{\ddimDGeqThreeSolveHigh}{$0.155\times$}
\newcommand{\ddimDGeqThreeSolveLow}{$1.3\,10^{-5}$}
\newcommand{\ddimDTwoYearBSBest}{$2.39\times$ ($k=12$)}
\newcommand{\ddimDTwoYearBSWorst}{$0.822\times$ ($k=6$)}

\newcommand{\ddimFmaxDFive}{444}
\newcommand{\ddimFmaxDFour}{168}
\newcommand{\ddimFmaxDThree}{57}
\newcommand{\ddimFmaxDTwo}{15}
\newcommand{\ddimGateBCells}{19}
\newcommand{\ddimGenReductionHigh}{$239\times$}
\newcommand{\ddimGenReductionLow}{$4.6\times$}
\newcommand{\ddimProbeCount}{1600}

\newcommand{\ddimWallCount}{4628}

\newcommand{\fltScheduleVsMdaRange}{$6.54$--$7.14$}

\newcommand{\fltValueFunctionVsMdaRange}{$2.62$--$3.17$}

\newcommand{\jitterBuiltFacesHigh}{243{,}542}
\newcommand{\jitterBuiltFacesLow}{25{,}760}
\newcommand{\jitterBuiltOverRealizedHigh}{$4.1\,10^{4}$}
\newcommand{\jitterBuiltOverRealizedLow}{$4.3\,10^{3}$}
\newcommand{\jitterDiverseGroupingCostPct}{23\%}
\newcommand{\jitterEconLevelsHigh}{18}
\newcommand{\jitterEconLevelsLow}{7}

\newcommand{\jitterGateExactMax}{$8.2\,10^{-11}$}
\newcommand{\jitterRealizedFHigh}{6}
\newcommand{\jitterRealizedFLow}{4}

\newcommand{\jointSizingFltFractionRange}{$0.24$--$0.32$}
\newcommand{\jointSizingFltSpeedupRange}{$3.1$--$4.1$}
\newcommand{\jointSizingInstances}{4}
\newcommand{\jointSizingOracleSpeedupRange}{$3.8$--$16.2$}
\newcommand{\jointSizingRelAgreement}{$\le5.8\,10^{-7}$}
\newcommand{\kCurveBarrierPureAdvantageRange}{$3.5$--$5.8\times$}
\newcommand{\kCurveBarrierPureObjAgreement}{$4.2\,10^{-9}$--$2.4\,10^{-7}$}
\newcommand{\kCurveBarrierPureSecondsKSixteen}{16.6}
\newcommand{\kCurveBarrierPureSecondsKSixtyFour}{61.2}
\newcommand{\kCurveBarrierPureSecondsKThirtyTwo}{35.1}

\newcommand{\kCurveCrossoverKAllSeeds}{16}
\newcommand{\kCurveCrossoverPushIters}{1{,}392{,}517}

\newcommand{\kCurveCutsRangeAllSeeds}{13--21}
\newcommand{\kCurveDefaultBarrierIters}{24}
\newcommand{\kCurveDefaultBarrierSeconds}{33.9}

\newcommand{\kCurveFrontalFailModeKSixtyFour}{\texttt{internal\_solver\_error}}

\newcommand{\kCurveFrontalSolver}{Gurobi}

\newcommand{\kCurveGapTarget}{$5\,10^{-7}$}
\newcommand{\kCurveGurobiVersion}{13.0.2}

\newcommand{\kCurvePerSolveRange}{0.242--0.316}
\newcommand{\kCurveRatioKFour}{$0.501\times$}
\newcommand{\kCurveRatioKSixteen}{$1.35\times$}

\newcommand{\kCurveRatioKThirtyTwo}{$1.92\times$}

\newcommand{\kCurveSeedsCount}{3}
\newcommand{\kCurveWallSeedsReproduced}{3}
\newcommand{\kCurveWarmColdGapKSixtyFour}{$1.7\,10^{-7}$}
\newcommand{\kernelCertifiedExactHigh}{13.4}
\newcommand{\kernelCertifiedExactLow}{12.1}
\newcommand{\kernelCertifiedFloatHigh}{12.3}
\newcommand{\kernelCertifiedFloatLow}{11.2}
\newcommand{\kernelHorizonHigh}{15{,}000}
\newcommand{\kernelHorizonLow}{5{,}000}
\newcommand{\kernelRelGap}{$1.1\,10^{-15}$}
\newcommand{\kernelScalingMaxN}{100{,}000}
\newcommand{\kernelScalingMaxTime}{0.0631}
\newcommand{\kernelScalingSpan}{100}
\newcommand{\kernelScalingUsHigh}{0.631}
\newcommand{\kernelScalingUsLow}{0.576}
\newcommand{\kernelSpeedupHigh}{93}
\newcommand{\kernelSpeedupLow}{35}
\newcommand{\kernelSpeedupRange}{$35$--$93$}

\newcommand{\mdaHorizonHigh}{15{,}000}
\newcommand{\mdaHorizonLow}{5{,}000}

\newcommand{\mdaVsGurobiRange}{$4.89$--$13.7$}

\newcommand{\mrEightDsmBands}{8}
\newcommand{\mrEightDsmPinError}{$3.8\,10^{-10}$}
\newcommand{\mrEightDsmRho}{10}
\newcommand{\mrEightDsmValue}{5.7}

\newcommand{\mrEightDsmWindows}{365}
\newcommand{\mrEightGap}{1.97}

\newcommand{\mrEightLevelViolation}{$2.5\,10^{-7}$}

\newcommand{\mrEightStorages}{34}

\newcommand{\pairAlphabetHHigh}{16}
\newcommand{\pairAlphabetHLow}{3}
\newcommand{\pairAlphabetVHigh}{18}
\newcommand{\pairAlphabetVLow}{3}
\newcommand{\pairFaceOverCellHigh}{11.3}
\newcommand{\pairFaceOverCellLow}{2.7}

\newcommand{\pkgCampaignVersion}{0.27.0}
\newcommand{\pkgVersion}{0.27.1}

\newcommand{\rampFwGap}{$+1.05\%$}
\newcommand{\rampFwWall}{9.1}
\newcommand{\rampGurobiWall}{0.56}
\newcommand{\rampHighsWall}{1.7}

\newcommand{\rampJointGap}{$1.1\,10^{-14}$}

\newcommand{\rampJointWall}{0.37}
\newcommand{\rampStorageOnlyWall}{0.03}
\newcommand{\reuseCurveSolves}{61}
\newcommand{\reuseCurveVsExactGrid}{3.1}

\newcommand{\reuseFamilyK}{100}
\newcommand{\reuseGridExactSize}{161}

\newcommand{\reuseLayers}{60}
\newcommand{\reuseRelGapMax}{$6.5\,10^{-15}$}
\newcommand{\reuseSpeedupCold}{57.9}
\newcommand{\reuseSpeedupWarm}{3.06}
\newcommand{\sandwichActiveAlphabetNetwork}{9}
\newcommand{\sandwichFvsKSatHigh}{244}
\newcommand{\sandwichFvsKSatLow}{208}
\newcommand{\sandwichGateWorst}{$1.9\,10^{-15}$}
\newcommand{\sandwichLossyBuiltHigh}{25{,}516}
\newcommand{\sandwichLossyBuiltLow}{7{,}696}
\newcommand{\sandwichLossyBuiltOverRealizedHigh}{473}
\newcommand{\sandwichLossyBuiltOverRealizedLow}{160}
\newcommand{\sandwichLossyRampShareHigh}{21\%}
\newcommand{\sandwichLossyRampShareLow}{18\%}
\newcommand{\sandwichLossyRealizedFHigh}{54}
\newcommand{\sandwichLossyRealizedFLow}{48}
\newcommand{\sandwichLossyStressCat}{1{,}852}
\newcommand{\sandwichLossyStressFMax}{406}
\newcommand{\sandwichLossyTwinFactorHigh}{$2.14\times$}
\newcommand{\sandwichLossyTwinFactorLow}{$1.73\times$}
\newcommand{\sandwichRoundsControl}{2}
\newcommand{\sandwichStressActiveShare}{40\%}
\newcommand{\statEdgeAuditTotal}{$3.74$}

\newcommand{\treeExactRelGapMax}{$3.8\,10^{-15}$}

\newcommand{\treeWallHigh}{13}
\newcommand{\treeWallLow}{5.1}
\newcommand{\twoStageEnergyCap}{4}
\newcommand{\twoStageHorizon}{8{,}760}
\newcommand{\twoStageMaxK}{500}
\newcommand{\twoStageMaxRelGap}{$3.1\,10^{-14}$}
\newcommand{\twoStageMaxSigmaGap}{0}
\newcommand{\twoStagePowerCap}{1}
\newcommand{\twoStagePriceGen}{i.i.d.\ $\mathcal{U}(1,100)$ per hour, seed 7}
\newcommand{\twoStageRecords}{5}
\newcommand{\twoStageSplitTOne}{4{,}380}
\newcommand{\twoStageSplitTTwo}{4{,}380}

\newcommand{\twoStoreBlockSpeedupRange}{$5.5$--$43$}
\newcommand{\twoStoreFacesLossless}{42}
\newcommand{\twoStoreFacesLossy}{762}
\newcommand{\twoStoreFamilyMbLossless}{9.8}
\newcommand{\twoStoreFamilyMbLossy}{178}

\newcommand{\twoStoreGurobiSpeedupRange}{$2.1$--$15.3$}
\newcommand{\twoStoreGurobiSpeedupWithBuild}{61}

\newcommand{\twoStoreReadoutInstances}{51}

\newcommand{\twoStoreUlpMax}{5}

\newcommand{\waterValueEnsembleSize}{11}
\newcommand{\waterValueExactGapSingleReservoir}{$1.4\,10^{-13}$}
\newcommand{\waterValueExpectedGap}{+0.250\%}

\newcommand{\waterValueLooGap}{+3.07\%}

\newcommand{\waterValueMyopicGap}{+57.7\%}

\newcommand{\waterValueWetYearGap}{+69.7\%}

\title{\bfseries Exact value functions and sensitivities for energy storage
by fast Legendre transforms}

\author{Robin Girard\thanks{Mines Paris -- PSL University, Centre PERSEE. \texttt{robin.girard@minesparis.psl.eu}}}
\date{October 2026}

\begin{document}
\maketitle

\begin{abstract}
Long chains of decisions coupled only through a bounded cumulative state ---
the operation of an energy-storage device is the canonical example --- form
convex programs that generic solvers treat as arbitrary sparse models. We
present an exact solver for this longitudinal class that propagates the
complete convex cost-to-reach function through restrictions and infimal
convolutions, each evaluated by fast Legendre--Fenchel conjugation as an
ordered merge of slopes: no state grid, no value approximation, and a
\emph{function}, not a single primal--dual pair, as output. Price-taking
storage instances of \kernelHorizonLow{} to \kernelHorizonHigh{} hourly
steps, whose per-step cost keeps a bounded number of pieces, solve in
milliseconds, \kernelSpeedupRange{} times faster than Gurobi --- and the
complete value function is returned faster than our verified
reimplementation of the specialised RAP--NC algorithm returns one schedule.
The output is what pays: capacity marginals read in one differentiated pass,
exactly where finite differences fail on piecewise-linear objectives and LP
duals are set-valued; recourse assembled, not sampled, on scenario trees;
and a certified sizing loop whose \kCurveGapTarget{} certificate closes in a
cut count independent of the number of weather years and of the seed. We
then widen the carried state. Two storages at one node can be
\emph{eliminated} rather than coordinated, by carrying the value function of
the pair, and the object that carries is combinatorially rigid: we prove
that every breakline the recursion realizes is vertical, horizontal, or
borne by a line through the origin \emph{exactly}, with no genericity and no
restriction on losses or horizon --- corroborated by an exact-rational audit
of \statEdgeAuditTotal{} million realized edges. For any number of
co-located stores sharing one charge--discharge coefficient the same
argument gives a $d$-dimensional theorem: every realized wall is a position
hyperplane or a type-$A$ indifference hyperplane $\{s_i=s_j\}$ through the
origin, nothing else, so the carried subdivision never leaves one
arrangement of fixed directions. Speed is not claimed uniformly but mapped
by a catalogue law: one-dimensional chains win the whole measured axis,
ahead of the frontal solve by \catLawOneDGainRange{}$\times$ and growing,
while the planar block pays the squared slope catalogue it carries and loses
it back by \catLawTwoDLossRange{}$\times$ on rich traced network curves ---
the boundary is not a crossover but the state dimension of the reader. Those
curves are themselves exact: a bounded-variable parametric dual simplex
traces the hourly injection cost of a network dispatch, and one such trace
followed by one solve is globally exact for a single storage inside the
network. The open-source package \pkg{}, every benchmark driver and all data
accompany the paper; every table and every headline figure is generated from
a versioned campaign.
\end{abstract}

\paragraph{Keywords.}
Legendre--Fenchel transform; infimal convolution; dynamic programming;
nested resource allocation; parametric linear programming; convex value
functions; energy storage; network dispatch.

\section{Introduction}
\label{sec:introduction}

Operating an energy-storage device against a known sequence of prices is one
of the simplest optimisation problems in power systems: charge or discharge
within a power limit while the cumulative energy stays between zero and
capacity. With linear conversion losses this is a linear program, routinely
passed to general-purpose solvers such as Gurobi or
HiGHS~\cite{Gurobi,HiGHS}. Yet this elementary instance exposes a large
performance gap. In the deliberately simple experiment of
\cref{tab:intro-storage}, \pkg{} and Gurobi solve the same sparse
price-taking storage problem with continuous random prices and identical
physical bounds: \pkg{} reproduces the optimum to machine precision and is
already \kernelSpeedupRange{} times faster between \kernelHorizonLow{} and
\kernelHorizonHigh{} periods, the
advantage still growing with the horizon. The gain involves no
approximation, no state-space discretisation, no relaxed physical model and
no specialised hardware. It is, however, a property of the instance as much
as of the method: a price taker's per-step cost keeps a bounded number of
pieces, and \cref{sec:experiments} measures how the comparison reverses when
the cost curves themselves grow with the horizon.

\begin{table}[t]
\centering
\caption{Motivating synthetic storage benchmark. Entries are median end-to-end times; Gurobi uses the natural sparse state-of-charge formulation and \pkg{} the flat-array value-function backend. The maximum relative objective difference is $1.1\,10^{-15}$. The protocol, and the specialised comparator planned alongside the two generic LP solvers, are described in \cref{subsec:kernel-bench}. (\pkg{} 0.27.0, machine \texttt{cyrius-2.home}.)}
\label{tab:intro-storage}
\begin{tabular}{lrrr}
\toprule
 & $T=5{,}000$ & $T=10{,}000$ & $T=15{,}000$ \\
\midrule
FLToptim [s] & 0.00324 & 0.00654 & 0.00926 \\
Gurobi [s] & 0.1134 & 0.4222 & 0.8614 \\
Gurobi/FLToptim & $\mathbf{35.1\times}$ & $\mathbf{64.5\times}$ & $\mathbf{93.0\times}$ \\
\bottomrule
\end{tabular}
\end{table}

The gap has a structural explanation. The storage LP is not an arbitrary
LP: its variables form a long chain coupled by one bounded cumulative
state. If the convex cost-to-reach function of that state is represented
explicitly, the Bellman step is, up to interval restrictions, an infimal
convolution, and Legendre--Fenchel conjugation turns convolution into
addition:
\[
(f\infconv g)^{*}=\conj f+\conj g .
\]
For convex piecewise-linear functions on the real line, conjugation
exchanges ordered breakpoints and slopes, so each Bellman step becomes an
ordered merge of two slope lists --- fast algebra on explicit functions
instead of a new optimisation. The core recursion was introduced in our
conference paper~\cite{Girard2014} for the sole optimisation of storage operation.
The present paper develops this simple idea into a complete method for more
realistic settings --- operation and investment in multi-energy systems with
several spatial nodes, and operation under uncertainty on scenario trees ---
and presents the complete open-source implementation, with case studies,
interfaces to energy-system models, and a benchmark against open-source and
commercial solvers.

An important feature of the method is that the solver returns a function, not
only a schedule. The slopes of that function are marginal values of stored
energy; forward and backward value functions expose the exact sensitivity
to capacity and yield a sizing oracle; and the same representation accepts,
as time-varying cost data, the exact injection-cost curves of an hourly
network dispatch, traced by a parametric dual simplex. Because all these
objects are convex piecewise-linear functions of one variable, they
compose: temporal oracles consume spatial curves, and coordination
algorithms exchange both.

The method is exact precisely where a scalar cumulative state separates
past from future --- the longitudinal class formalised in
\cref{sec:longitudinal} --- and the natural question is what happens when
the separator is no longer scalar. The paper answers it for the first step
up. Two storages at one node can be \emph{eliminated} rather than
coordinated, by carrying the value function of the pair; the carried object
is then a subdivision of a price plane, and the paper's main structural
results say that this subdivision is combinatorially rigid --- it never
leaves one \emph{fixed} arrangement of lines whose directions are known in
advance, and the same holds in dimension $d$ for co-located stores sharing
one charge--discharge coefficient (\cref{sec:pair}). What the widening costs
is measured rather than asserted: a catalogue law separates the regime in
which a one-dimensional chain wins the entire measured axis from the one in
which the planar block, paying a face count quadratic in the slope
catalogue it carries, loses on the rich curves that a real network produces.
The boundary between them is not a crossover but the state dimension of the
reader.

Beyond one intertemporal unit per coupled group, exactness stops and the
composition becomes a coordination scheme whose iterates are feasible with
measured gaps. That layer is not this paper's subject --- it buys
correctness rather than speed, and against a frontal barrier on annual
systems it does not win time --- so it is documented in full in
\cref{app:coordination} and the body stops where exactness stops.

\paragraph{Contributions.}
\begin{enumerate}
\item \emph{A fast Legendre-transform engine for explicit convex functions}
(\cref{sec:flt}): canonical piecewise-linear and piecewise-quadratic
representations, five closed operations with proven closure and
per-operation cost, and an arithmetic--degeneracy contract that never
resorts to perturbation.
\item \emph{An exact solver for longitudinal convex programs}
(\cref{sec:longitudinal}): the restriction--convolution recursion, a
deterministic backward-recovery rule valid on flat optimal faces, explicit
worst-case and output-sensitive costs, and a precise position relative to
the optimal serial algorithm of Tseng and Luo and to nested resource
allocation.
\item \emph{Widening the carried state, and two rigidity theorems}
(\cref{sec:pair}): two storages at one node eliminated by carrying the value
function of the pair, an hourly step that is two directional gradient clamps
and one addition with no planar transform inside the loop, and the
structural results that make it tractable. A
\emph{directional-stationarity} theorem: every realized breakline of the
carried subdivision is vertical, horizontal, or borne by a line through the
origin \emph{exactly}, under no genericity assumption and for arbitrary
losses and horizons, so the subdivision is permanently a sub-complex of one
fixed line arrangement --- only the position alphabet is left to
per-instance measurement, and we prove that this asymmetry is structural
rather than an unfinished proof. And, for any number of co-located stores
sharing one charge--discharge coefficient, a $d$-dimensional
\emph{realized-walls} theorem, monotonicity hypothesis lifted: position
hyperplanes and the type-$A$ indifference hyperplanes $\{s_i=s_j\}$ through
the origin, nothing else. Both are corroborated in exact rational
arithmetic. The block's capacity marginals follow by forward-mode
differentiation of the conjugate recursion.
\item \emph{Value functions as investment and network oracles}
(\cref{sec:investment,sec:injection}): an exact forward--backward
factorisation, a marginal-capacity theorem evaluated directly on
value-function slopes, sizing as a Legendre transform in the capacity
variable, joint energy--power sizing by exact Benders cuts in a level
bundle --- whose certificate closes in a cut count independent of the number
of weather years once the master carries the one feasibility frontier that
optimality cuts cannot supply --- and exact hourly injection-cost curves
traced by a bounded-variable parametric dual simplex with basis transport,
one such trace plus one solve being globally exact for a single storage
inside a network.
\item \emph{Expectation is closed in the class, so recourse is exact ---
at one cut and on a tree} (\cref{sec:stochastic}): a positively weighted sum
of cost-to-go functions is again an explicit convex piecewise-linear function
obtained by one ordered merge, which turns a two-stage stochastic operation
problem with $K$ scenarios into $K+1$ value-function passes and one merge ---
no cuts, no state grid, no sampling of the linking state, no convergence loop
--- and returns the per-scenario marginal water values at the optimal boundary
state as a by-product. On a scenario tree the same argument survives its own
repetition: expectation and elimination alternate down the horizon and one
backward sweep leaves the exact conditional cost-to-go at every node
(\cref{prop:tree-exact}), which is the point at which the outer approximation
of stochastic dual dynamic programming becomes unnecessary rather than merely
accurate. A corollary of the same factorisation
(\cref{cor:rolling-exact}) makes those curves operational: a window-by-window
policy driven by the true cost-to-go has \emph{exactly zero} truncation
error, so the gap of a rolling policy is an information error and nothing
else.
\item \emph{An open-source implementation and a progressive benchmark}
(\cref{sec:package,sec:experiments}): the \pkg{} package
and a comparison, under both end-to-end and solve-only accountings, against
sparse HiGHS and Gurobi formulations --- time, peak memory, objective
accuracy, feasibility --- a verified reimplementation of the specialised
RAP--NC comparator, and a catalogue-size law that predicts, instance by
instance, which reader regime wins.
\end{enumerate}

\paragraph{Organisation.}
The paper is organised by the dimension of the carried state. \emph{Part I}
develops the one-dimensional transform: \cref{sec:flt} builds the engine and
\cref{sec:longitudinal} the longitudinal solver, after which three sections
read the value function it returns --- \cref{sec:investment} turns it into
investment oracles for the energy and power capacity of a small number of
devices, \cref{sec:stochastic} averages finitely many futures inside the same
class and derives an exact recourse algorithm, at one cut and then on a
scenario tree, and \cref{sec:injection} supplies the hourly cost curves the
recursion reads, by tracing the exact injection value of a network dispatch.
\emph{Part II} widens the carried state: \cref{sec:pair} is the bivariate
block, its directional-stationarity theorem, its $d$-dimensional extension
and its differentiated capacity marginals. \emph{Part III} is implementation
and evidence: \cref{sec:package} describes the package,
\cref{sec:experiments} reports the computational study --- grouped by the
same two regimes, and closing on the catalogue law that decides between them
--- and \cref{sec:conclusion} concludes. The appendices carry the proofs of
the body's results (\cref{app:proofs,app:stat,app:statd}), the coordination
layer and the system-scale study that this paper does not claim
(\cref{app:coordination}), the parametric-simplex engineering and the
campaign and measurement conventions.

\subsection{Related work and positioning}
\label{subsec:related}

\Cref{tab:positioning} summarises how the method relates to its closest
neighbours; the paragraphs below give the detail.

\begin{table}[t]
\centering
\caption{Positioning among exact and generic alternatives for long
temporally coupled convex programs.}
\label{tab:positioning}
\footnotesize
\begin{tabularx}{\linewidth}{@{}p{2.6cm}Xp{2.5cm}Xp{2.6cm}@{}}
\toprule
method & problem class & complexity & output & key limitation\\
\midrule
\pkg{} (this paper) & longitudinal convex, PWL/PWQ costs
& $O(m{+}n)$ per convolution; output-sensitive totals
(\cref{rem:tseng-luo}) & complete value functions, trajectory, marginal and
parametric curves & scalar cumulative state; convexity\\
\addlinespace
serial algorithm of Tseng--Luo~\cite{Tseng1996} & serially nested
inf-convolutions; series--parallel flows & $\Theta(N\log N)$, optimal &
optimal value & no implementation reported; no parametric output\\
\addlinespace
MDA~\cite{Vidal2019} & RAP--NC, continuous and integer & $O(n\log m)$
linear/quadratic & optimal allocation for fixed bounds & no value
functions\\
\addlinespace
tabulated DP~\cite{Maly1995,Ferrero1998,Brown2008a} & general scalar-state
& $O(T\cdot\text{grid})$ & approximate values & discretisation error\\
\addlinespace
conjugate DP on grids~\cite{Kolarijani2020} & input-affine dynamics,
separable costs & $\tilde O(X{+}Y{+}Z)$ per step & sampled cost-to-go
& discretisation error; policy by enumeration\\
\addlinespace
sparse simplex, interior point~\cite{HiGHS,Gurobi} & general LP/QP & --- &
one primal--dual pair & global model held in memory\\
\bottomrule
\end{tabularx}
\end{table}

\paragraph{Storage scheduling and its generic formulations.}
Deterministic arbitrage and planning models commonly optimise charging,
discharging and state of charge against a known price or system
trajectory~\cite{Korpaas2003,Sioshansi2009}, and linear programming is
attractive because it is exact for the usual price-taking model and easily
absorbs engineering constraints. Modern revised-simplex and interior-point
solvers exploit sparse state equations, presolve and advanced linear
algebra; HiGHS provides a state-of-the-art open-source dual revised
simplex~\cite{HiGHS} and Gurobi is the natural commercial
reference~\cite{Gurobi}. Their drawback is not ignored sparsity --- the
path equations $s_t-s_{t-1}=x_t$ already make a fair LP formulation linear
in the horizon --- but that a general solver must retain and process a
global model to return one primal--dual solution. When the same temporal
structure extends over very long horizons, is solved for thousands of
capacities or sites, or becomes one block of a larger decomposition, the
factorisations and model storage remain substantial, and the complete
cost-to-reach and marginal-value curves are not native outputs of one
solve. Classical dynamic programming attacks the temporal dimension
directly but tabulates the state, with cost and accuracy set by the
grid~\cite{Maly1995,Ferrero1998,Brown2008a}. Approximate and stochastic
dynamic programming address the sequential regime --- bidding or control
under imperfect information along the whole horizon~\cite{Jiang2015,Powell2011};
recent analytical stochastic methods exploit piecewise-linear marginal value
functions but still approximate them on a grid over short operating
horizons~\cite{Zheng2022}. Our earlier conference paper~\cite{Girard2014}
showed that the fast Legendre transform manipulates an exact convex
piecewise-linear representation of the value function instead. The
uncertainty treated in \cref{sec:stochastic} is narrower than the sequential
regime and is solved exactly: recourse over a finite scenario set, whose
expected cost-to-go is assembled from the very value functions that
representation already returns. That places the section next
to the decomposition family rather than the tabulation family --- the
L-shaped method builds the recourse function from below by iterated
subgradient cuts, and stochastic dual dynamic
programming~\cite{PereiraPinto1991} extends the same outer approximation to
multistage trees --- with the approximation becoming unnecessary on a scalar
cumulative state. That last clause is not left as an assertion:
\cref{prop:tree-exact} carries it through the alternation of expectation and
elimination that a tree forces, and \cref{subsec:tree-bench} measures it.
Cut-based methods keep their advantage exactly where ours stops: vector
states, and stages whose block constraints break the longitudinal structure.

\paragraph{Nested resource allocation.}
The longitudinal class coincides with the resource-allocation problem with
nested lower and upper constraints (RAP--NC) studied by Vidal, Gribel and
Jaillet~\cite{Vidal2019}. Their monotonic divide-and-conquer algorithm
(MDA) needs no gradients, differentiability or strict convexity, treats
continuous and integer variables, attains $O(n\log m)$ for linear and
quadratic costs, and solves million-variable instances in seconds; for
general convex oracle costs it returns an $\epsilon$-proximate continuous
solution in $O(n\log m\log(nB/\epsilon))$. It is therefore a direct
specialised competitor for the longitudinal kernel, with a mechanism ---
recursively fixing boundary sums and exploiting monotonicity of the
resulting allocations --- distinct from Bellman elimination and infimal
convolution, and broader than the present implementation for integer
decisions. The distinction is the output: MDA returns an optimal allocation
for fixed nested bounds, whereas \pkg{} propagates the complete forward and
backward value functions that the investment and coordination layers of
\cref{sec:investment,sec:coordination} consume. Wherever only an optimal
schedule is needed, the two are interchangeable
(\cref{rem:oracle-agnostic}); a direct benchmark is specified in
\cref{subsec:kernel-bench}.

\paragraph{Computational convex analysis.}
Fast conjugation predates its use here as an optimisation recursion, and
the field divides into two frameworks that differ in what they store.
\emph{Fast transform} algorithms store sampled points: Lucet's linear-time
Legendre transform takes the finite-difference slopes of a convex hull and
merges them with a prescribed slope set, attaining $O(n+m)$ on the line
and, by factorising the supremum one coordinate at a time, in $\R^d$ as
well~\cite[\S4]{Lucet1997}. What such an algorithm returns is the
\emph{discrete} transform: it converges to the conjugate at rate $O(n^{-2})$
on twice-differentiable data, and the slopes at which the conjugate is
evaluated must be chosen before the computation. These primitives serve in
distance transforms, Hamilton--Jacobi equations, convex envelopes, max-plus
algebra and multifractal analysis~\cite{Lucet2010}. They have also been
turned into dynamic programming. Conjugate duality was recognised early as
a way to trade the Bellman minimisation for an
addition~\cite{KleinMorin1991}; Carpio and Kamihigashi used the linear-time
transform for a fast value iteration~\cite{CarpioKamihigashi2020}; and
Kolarijani and Mohajerin Esfahani~\cite{Kolarijani2020} built a discrete
conjugate dynamic program for input-affine dynamics whose step costs
$\tilde O(X+Y+Z)$ in the sizes of the primal and dual grids. Storage
operation, which our conference paper~\cite{Girard2014} already treated by
exact conjugation, is exactly their separable setting in dimension one, and
their recursion is then the restriction--convolution recursion of
\cref{sec:longitudinal}; what differs is the representation. Their
cost-to-go is sampled on prescribed grids, the result carries
discretisation error bounds, and the policy is recovered afterwards by
enumeration over the inputs --- its extraction from the recursion is listed
there as open, and is what \cref{lem:recovery} supplies. The \emph{piecewise
linear-quadratic} framework stores the function itself --- a finite list of
pieces, each at most quadratic --- and computes the same transforms exactly
and in linear time, with the dual domain emerging from the computation
instead of being prescribed~\cite{LucetBauschkeTrienis2009}; related work
treats infimal convolutions of structured convex quadratic
functions~\cite{Bayon2012}. \Cref{sec:flt} belongs to the second framework,
and the distinction is not cosmetic: no grid, sample or tolerance enters
the definition of any result below. Our contribution is not the first exact
conjugation but its integration --- with restrictions interleaved at every
step, deterministic trajectory recovery, degeneracy handling and retained
value functions --- into a complete longitudinal optimisation engine.

\paragraph{The optimal serial algorithm.}
Tseng and Luo~\cite{Tseng1996} studied nested sums and infimal convolutions
of convex piecewise-linear functions, obtained an $O(N\log N)$ algorithm
for serial nesting with a matching comparison lower bound, and extended the
construction to series--parallel linear-cost network flows. This is the
closest algorithmic ancestor of our work, and it has received little
attention in energy optimisation, where storage is still predominantly
approached through generic LP/MILP formulations, state-discretised DP or
approximate value functions. \Cref{rem:tseng-luo} states precisely what we
claim relative to their bound; the contribution here is
software-and-composition, not asymptotic novelty --- Vidal et
al.~\cite{Vidal2019} match the bound in the linear case with $m=O(T)$.

\paragraph{Monotropic programming and parametric linear programming.}
The class sits within Rockafellar's monotropic optimisation and convex
network-flow framework~\cite{Rockafellar1984}, which clarifies both scope
and limitation: the method exploits a serial elimination order with a
scalar separator, and meshed or multi-state couplings require either a
higher-dimensional value function or a coordination algorithm around
several one-dimensional oracles. On the spatial side, tracing a value
function along a scalar right-hand side is classical parametric linear
programming~\cite{Gal1995,AdlerMonteiro1992}; parametric network flows have
been studied from shortest-path augmentation~\cite{AhujaBatraGupta1984} to
complete parametric minimum-cost-flow paths under strictly convex
quadratics~\cite{KlimmWarode2022}. The number of critical regions can be
exponential for general LPs~\cite{Murty1980}, while total unimodularity
keeps the transport traces used here short and cheap to
update~\cite{Schrijver1986}. In power systems the resulting object is not
new: the locational marginal price is a piecewise-constant, non-decreasing
function of the load, with breakpoints at the \emph{critical load levels}
where the optimal basis changes~\cite{BoLi2009}, and multiparametric
programming partitions the parameter space into the corresponding critical
regions~\cite{JiThomasTong2017}. What is contributed here is therefore not
the curve but its \emph{scale and use}: tracing thousands of closely related
bounded hourly LPs by basis transport, so that a full year of exact
injection-cost curves becomes an input to the dynamic program rather than a
by-product of a single market clearing --- developed in \cref{sec:injection}.

\part{The transform in dimension one}

\section{A fast Legendre-transform engine for explicit convex functions}
\label{sec:flt}

Every algorithm in this paper manipulates closed proper convex functions of
one real variable as explicit, finite data. This section fixes the
representations, the five operations applied to them, and their cost; one
corollary records that positively weighted combinations --- the form taken
by an expectation over finitely many scenarios --- stay in the class at
merge cost. The
arithmetic contract under which the word \emph{exact} carries over to an
implementation is recorded with the package in \cref{sec:package}. The
following sections compose these operations into a longitudinal solver
(\cref{sec:longitudinal}), investment and injection oracles
(\cref{sec:investment,sec:injection}) and coordination algorithms
(\cref{sec:coordination}).

\subsection{Representations and conjugation}
\label{subsec:representations}

Let $f:\R\to\R\cup\{+\infty\}$ be closed, proper and convex. Its
Legendre--Fenchel conjugate is
\begin{equation}
  \conj f(p)=\sup_{x\in\R}\{px-f(x)\}.
  \label{eq:conjugate}
\end{equation}
For such functions $f^{**}=f$, and the subdifferentials are inverse
relations: $p\in\partial f(x)$ if and only if $x\in\partial f^*(p)$. In one
dimension, convexity orders both breakpoint positions and slopes. This
monotonicity is the computational resource used throughout.

A convex piecewise-linear (PWL) function is represented, up to one anchor
value, by its domain endpoints and the ordered jumps of its derivative;
equivalently, one stores a monotone relation between positions and slopes.
Under conjugation the roles of the two coordinates are exchanged:
breakpoints of $f$ become affine pieces of $\conj f$, affine pieces become
breakpoints, and the endpoints of $\operatorname{dom}f$ become the
asymptotic slopes of $\conj f$ (\cref{fig:flt-pipeline}). Repeated slopes
and vertical parts of the subgradient graph are merged canonically; they are
degeneracies of the description, not events to be removed by perturbing the
problem. Canonical merging leaves the represented function unchanged --- a
closed convex function is determined by its epigraph, and merging duplicate
records rewrites the description, not the epigraph.

For convex piecewise-quadratic (PWQ) functions, each open piece carries an
affine derivative $p=ax+b$ with $a\ge0$. Conjugation inverts this monotone
affine relation on quadratic pieces and exchanges linear pieces with slope
jumps. The PWL representation is recovered when every $a$ vanishes.

Both are the convex members of the piecewise linear-quadratic (PLQ) class
--- functions whose domain is a finite union of polyhedra on each of which
the function is at most quadratic --- which is closed under addition,
positive scaling, conjugacy and the Moreau
envelope~\cite[Thm.~11.14]{RockafellarWets1998} and for which exact
linear-time algorithms were introduced by Lucet, Bauschke and
Trienis~\cite{LucetBauschkeTrienis2009}. Two properties of the
representation used here are worth naming, because the operator costs of
\cref{subsec:complexity} rest on them. First, convexity is an
\emph{invariant} rather than a restored property: a function is stored by
the non-negative jumps of its derivative, every operation below adds
non-negative jumps, and no convex hull is ever computed --- the step that
opens the linear-time Legendre transform~\cite{Lucet1997} has no
counterpart in \pkg{}. Second, the representation is \emph{self-dual}:
conjugation transposes the two coordinates of each derivative record, so it
is an in-place linear pass rather than a rebuild.

\subsection{Five closed operations}
\label{subsec:operations}

The longitudinal algorithms use only the following operations:
\begin{align}
\text{pointwise sum:}\quad &(f+g)(x)=f(x)+g(x),\\
\text{restriction:}\quad &f|_{[a,b]}(x)=f(x)+\delta_{[a,b]}(x),\\
\text{reflection/shift:}\quad &(R_yf)(x)=f(y-x),\\
\text{infimal convolution:}\quad &(f\infconv g)(z)=\inf_x\{f(z-x)+g(x)\},\\
\text{minimisation:}\quad &\argminop_x f(x).
\end{align}
Here $\delta_{[a,b]}$ is zero on the interval and $+\infty$ outside.

The central identity is
\begin{equation}
  f\infconv g=(\conj f+\conj g)^{*},
  \label{eq:flt-infconv}
\end{equation}
valid under a standard closedness condition that is automatic for the
polyhedral functions used here whenever the convolution is proper. An
apparently continuous allocation problem therefore becomes: conjugate the
operands, merge their ordered derivative descriptions under addition, and
conjugate back. \Cref{fig:flt-pipeline} follows one convolution through this
pipeline. This is the fast Legendre-transform kernel; at no point does a
grid, a sample or a tolerance enter the definition of the result.

\begin{figure}[t]
\centering
\resizebox{\linewidth}{!}{%
\begin{tikzpicture}[
  x=0.5cm, y=0.5cm,
  axisline/.style={-{Stealth[length=1.6mm]}, thin, black!45},
  fplot/.style={very thick, blue!70!black},
  gplot/.style={very thick, orange!85!black, dash pattern=on 3pt off 2pt},
  hplot/.style={very thick, violet!75!black},
  wall/.style={densely dotted, thin},
  bp/.style={circle, fill, inner sep=1.0pt},
  ep/.style={rectangle, draw, thick, fill=white, inner sep=1.3pt},
  slbl/.style={font=\tiny},
  lbl/.style={font=\scriptsize},
  oplbl/.style={font=\scriptsize, align=center, black!70}
]
\begin{scope}
  \draw[axisline] (-1.6,0) -- (2.4,0) node[below, lbl] {$x$};
  \draw[axisline] (0,-0.25) -- (0,1.9);
  \draw[wall, blue!70!black] (0,0) -- (0,1.5) (1.25,0.875) -- (1.25,1.5);
  \draw[fplot] (0,0) -- (1,0.5) -- (1.25,0.875);
  \node[lbl, blue!70!black] at (1.65,1.1) {$f$};
  \node[slbl, blue!70!black] at (1.02,0.22) {$\tfrac12$};
  \node[slbl, blue!70!black] at (1.6,0.62) {$\tfrac32$};
  \draw[wall, orange!85!black] (-1,1) -- (-1,1.6) (1,1) -- (1,1.6);
  \draw[gplot] (-1,1) -- (0,0) -- (1,1);
  \node[lbl, orange!85!black] at (-1.3,0.85) {$g$};
  \node[slbl, orange!85!black] at (-1.0,0.4) {$-1$};
  \node[slbl, orange!85!black] at (0.38,0.68) {$1$};
  \node[bp, orange!85!black] at (0,0) {};
  \node[ep, orange!85!black] at (-1,1) {};
  \node[ep, orange!85!black] at (1,1) {};
  \node[lbl] at (0.55,-1.3) {(a)\; operands};
\end{scope}
\draw[-{Stealth[length=2mm]}, thick, black!60] (3.4,0.85) -- (4.8,0.85)
  node[midway, above, oplbl] {$(\cdot)^{*}$};
\begin{scope}[shift={(7.4,0)}]
  \draw[axisline] (-2.1,0) -- (2.4,0) node[below, lbl] {$p$};
  \draw[axisline] (0,-0.25) -- (0,1.9);
  \draw[black!60] (-1,0) -- (-1,-0.14) (0.5,0) -- (0.5,-0.14)
                  (1,0) -- (1,-0.14) (1.5,0) -- (1.5,-0.14);
  \node[slbl, black!60] at (-1,-0.46) {$-1$};
  \node[slbl, black!60] at (0.5,-0.46) {$\tfrac12$};
  \node[slbl, black!60] at (1,-0.46) {$1$};
  \node[slbl, black!60] at (1.5,-0.46) {$\tfrac32$};
  \draw[fplot] (-1.9,0) -- (0.5,0) -- (1.5,1) -- (1.95,1.5625);
  \node[lbl, blue!70!black] at (1.5,1.42) {$\conj f$};
  \draw[gplot] (-1.9,0.9) -- (-1,0) -- (1,0) -- (1.9,0.9);
  \node[lbl, orange!85!black] at (-1.1,1.15) {$\conj g$};
  \node[bp, orange!85!black] at (0.75,0) {};
  \node[ep, orange!85!black] at (-1.55,0.55) {};
  \node[ep, orange!85!black] at (1.55,0.55) {};
  \node[lbl] at (0.05,-1.3) {(b)\; conjugates};
\end{scope}
\draw[-{Stealth[length=2mm]}, thick, black!60] (10.1,0.85) -- (11.7,0.85)
  node[midway, above, oplbl] {$+$}
  node[midway, below, oplbl] {merge};
\begin{scope}[shift={(14.3,0)}]
  \draw[axisline] (-2.1,0) -- (2.4,0) node[below, lbl] {$p$};
  \draw[axisline] (0,-0.25) -- (0,1.9);
  \draw[hplot] (-1.6,0.6) -- (-1,0) -- (0.5,0) -- (1,0.5) -- (1.5,1.5)
               -- (1.67,1.88);
  \node[lbl, violet!75!black] at (-1.05,1.45) {$\conj f+\conj g$};
  \draw[very thick, orange!85!black] (-1,-0.16) -- (-1,-0.44);
  \draw[very thick, blue!70!black]   (0.5,-0.16) -- (0.5,-0.44);
  \draw[very thick, orange!85!black] (1,-0.16) -- (1,-0.44);
  \draw[very thick, blue!70!black]   (1.5,-0.16) -- (1.5,-0.44);
  \node[lbl] at (0.15,-1.3) {(c)\; dual sum};
\end{scope}
\draw[-{Stealth[length=2mm]}, thick, black!60] (17.0,0.85) -- (18.4,0.85)
  node[midway, above, oplbl] {$(\cdot)^{*}$};
\begin{scope}[shift={(20.4,0)}]
  \draw[axisline] (-1.6,0) -- (3.0,0) node[below, lbl] {$x$};
  \draw[axisline] (0,-0.25) -- (0,2.1);
  \draw[wall, violet!75!black] (-1,1) -- (-1,1.6) (2.25,1.875) -- (2.25,2.1);
  \draw[hplot] (-1,1) -- (0,0) -- (1,0.5) -- (2,1.5) -- (2.25,1.875);
  \node[lbl, violet!75!black] at (1.4,1.78) {$f\infconv g$};
  \node[slbl, violet!75!black] at (-1.0,0.4) {$-1$};
  \node[slbl, violet!75!black] at (1.02,0.22) {$\tfrac12$};
  \node[slbl, violet!75!black] at (1.75,0.95) {$1$};
  \node[slbl, violet!75!black] at (2.62,1.6) {$\tfrac32$};
  \node[lbl] at (0.9,-1.3) {(d)\; result $f\infconv g=(\conj f+\conj g)^{*}$};
\end{scope}
\end{tikzpicture}%
}
\caption{One infimal convolution evaluated in the dual, on the two operands
$f$ (blue, solid) and $g$ (orange, dashed), whose four piece slopes
$-1,\tfrac12,1,\tfrac32$ are all distinct. Conjugation exchanges the two
coordinates of every derivative record: the slope written on each piece in
(a) becomes an axis position in (b); following $g$, its kink (dot) becomes
the flat piece of $\conj g$ (dot), and the endpoints of
$\operatorname{dom}g$ (squares) become the asymptotic slopes of $\conj g$
(squares). Addition of the conjugates is an ordered merge of the two record
lists: in (c) the ticks mark the four records of the merged list, coloured
by operand --- the two lists interleave. Conjugating back yields
$f\infconv g$, whose pieces replay the merged records in slope order.
Throughout, the objects are finite ordered lists of breakpoints and slopes,
never grids or samples.}
\label{fig:flt-pipeline}
\end{figure}

\subsection{Closure and operator cost}
\label{subsec:complexity}

\begin{proposition}[Closure]
\label{prop:closure}
The five operations map closed proper convex PWL functions to closed proper
convex PWL functions whenever the result is proper, and the infimum defining
$f\infconv g$ is then attained for every point of its domain. The same
statements hold for the implemented convex PWQ class.
\end{proposition}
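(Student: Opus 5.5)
The plan is to treat the four function-valued operations one at a time, working with the epigraph and the derivative-record description of \cref{subsec:representations}, and to dispose of minimisation at the end. A closed proper convex PWL function on $\R$ is exactly a function whose epigraph is a polyhedron, nonempty and not containing a vertical line. Equivalently, its domain is a closed interval, possibly unbounded, and it is the maximum of finitely many affine functions on that interval. I will use this polyhedral characterisation as the working definition. Then sum, restriction and reflection/shift are immediate. For the sum, the domain is the intersection of two closed intervals, and the function is the maximum of pairwise sums of affine pieces, hence convex PWL; it is closed, and it is proper exactly when the domains meet. Restriction is the special case $g=\delta_{[a,b]}$, which is itself closed polyhedral. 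Reflection/shift is composition with the affine bijection $x\mapsto y-x$, which maps a polyhedral epigraph to a polyhedral epigraph.

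For the infimal convolution I will use the epigraph identity $\operatorname{epi}(f\infconv g)\supseteq\operatorname{epi}f+\operatorname{epi}g$, with equality when the infimum is attained. The sum of two polyhedra is a polyhedron, since it is the linear image of their product, and linear images of polyhedra are polyhedra (Minkowski--Weyl, or Fourier--Motzkin elimination). This closedness of polyhedral images is the key point. The function $h$ whose epigraph is $\operatorname{epi}f+\operatorname{epi}g$ is therefore polyhedral and convex, and it coincides with $f\infconv g$ wherever the infimum is attained.

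Attainment is the main step. Fix $z$ with $(f\infconv g)(z)>-\infty$, which holds at every point once the convolution is proper. The map $x\mapsto f(z-x)+g(x)$ is a proper polyhedral convex function bounded below, and a polyhedral convex function that is bounded below attains its infimum. I will prove this in one dimension directly: on each of the finitely many affine pieces of this map over its interval domain, boundedness below forces either a finite minimiser at a breakpoint or endpoint, or a slope of zero on an unbounded piece, which is attained everywhere on that piece. Hence $\operatorname{epi}(f\infconv g)=\operatorname{epi}f+\operatorname{epi}g$ exactly, and the result is closed, convex and PWL. It is proper by hypothesis.

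As a cross-check consistent with \eqref{eq:flt-infconv}, I will note that the same conclusion follows from conjugation. For polyhedral functions, $f^*$ and $g^*$ are polyhedral, and their sum is polyhedral. Its conjugate is polyhedral, and it equals $f\infconv g$ without any closure step, because the polyhedral qualification condition holds whenever the convolution is proper. For minimisation, I will show that the argmin set of a closed proper convex PWL function is either empty or a closed interval, possibly a single point; it is the face where $0\in\partial f$. The convex-function output is the indicator of that interval, which is again in the class.

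For the PWQ class, polyhedrality is unavailable, so I will instead rely on the PLQ closure theorem cited above \cite[Thm.~11.14]{RockafellarWets1998}. Convex PLQ functions are closed under addition, composition with affine maps and conjugacy. Together with \eqref{eq:flt-infconv}, this gives closure of the convolution whenever it is proper. Attainment in the PWQ case follows from a finite-piece argument: the map $x\mapsto f(z-x)+g(x)$ is convex and piecewise quadratic with finitely many pieces, and it is bounded below. On each piece, a convex quadratic that is bounded below either has a minimiser or is constant along its recession direction, so the infimum over each piece, and hence overall, is attained; this is a Frank--Wolfe-type property of quadratics. I expect the main care to be needed in the unbounded-domain cases of this attainment argument. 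These are the cases where the PL or PQ pieces have zero slope, or zero curvature and zero slope, at infinity; the properness hypothesis is exactly what excludes $-\infty$ there.
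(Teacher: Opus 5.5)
Your proof is correct. For sum, restriction, reflection/shift and minimisation it matches the paper's. The difference is in the infimal convolution: you handle it on the primal side, while the paper handles it on the dual side.

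The paper gets polyhedrality from conjugation: $\conj f$ and $\conj g$ are proper polyhedral, so is their sum, and $f\infconv g=(\conj f+\conj g)^*$ by \eqref{eq:flt-infconv}. It then cites \cite[\S19]{Rockafellar1970} for attainment. Your conjugation paragraph is exactly that route, used only as a cross-check. Your primal route is more self-contained and needs no qualification condition.

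The primal route also gives more than you use. With $P=\operatorname{epi}f+\operatorname{epi}g$, one has $(f\infconv g)(z)=\inf\{t:(z,t)\in P\}$ at \emph{every} $z$. Since $P$ is a closed polyhedron, $(z,(f\infconv g)(z))\in P$ whenever the value is finite, and that \emph{is} attainment. Your separate piece-by-piece one-dimensional attainment argument is therefore redundant; it is essentially how the \S19 result the paper cites is proved.

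What the dual route buys is uniformity with the algorithm. It is the record-list argument behind \cref{prop:opcost}, and it carries over to PWQ at the level of representations: conjugation inverts $p=ax+b$ piece by piece, mapping finite record lists to finite record lists. Minkowski sums of PWQ epigraphs are not polyhedral, which is why you have to import \cite[Thm.~11.14]{RockafellarWets1998} instead.

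One point to tighten, which the paper also leaves implicit. Off the polyhedral case, conjugation only gives $(\conj f+\conj g)^*=\operatorname{cl}(f\infconv g)$, so invoking \eqref{eq:flt-infconv} for PWQ requires $f\infconv g$ to be closed. Your attainment argument does not supply this: attainment gives $\operatorname{epi}(f\infconv g)=\operatorname{epi}f+\operatorname{epi}g$, not closedness of that sum. On $\R$ it is a short check:
\begin{itemize}
\item PWQ domains are closed intervals, so $\operatorname{dom}(f\infconv g)=\operatorname{dom}f+\operatorname{dom}g$ is a closed interval.
\item The function is continuous on the interior of that interval.
\item At a finite endpoint, say $z^\ast=b+d$ with $b,d$ the right endpoints of the operands' domains, every decomposition $(z-x,x)$ of a nearby $z$ has $x\in[z-b,d]$ and so is forced to $(b,d)$. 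Lower semicontinuity there then follows from that of $f$ and $g$.
\end{itemize}

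Also state attainment for $z\in\operatorname{dom}(f\infconv g)$ rather than for $(f\infconv g)(z)>-\infty$. Otherwise the minimand may be identically $+\infty$ and is not proper.
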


\begin{corollary}[Closure under positive combinations]
\label{cor:poscomb}
Let $p_w>0$ and let $f_w$ be closed proper convex PWL with $m_w$ derivative
records, $w=1,\dots,K$, with a common point in the intersection of their
domains. Then $\sum_{w=1}^{K}p_wf_w$ is closed proper convex PWL. It is
computed by one $K$-way ordered merge of the derivative record lists and
carries at most $\sum_{w}m_w+O(1)$ records.
\end{corollary}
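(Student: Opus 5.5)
The plan is to prove the three claims of \cref{cor:poscomb} in turn: that $\sum_w p_wf_w$ lies in the class, that one ordered merge computes it, and that the record count is bounded as stated.

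\emph{Closure.} Positive scaling by $p_w>0$ preserves closedness, properness, convexity and piecewise linearity. In the representation it multiplies every derivative jump and every slope by $p_w$ and leaves the breakpoint positions and domain endpoints unchanged. It is therefore a linear pass that keeps the jumps non-negative. Closure under sums then follows by induction on $K$ from the pointwise-sum case of \cref{prop:closure}. The common point in $\bigcap_w\operatorname{dom}f_w$ is exactly what makes each partial sum proper, since every partial sum is finite there and nowhere equals $-\infty$. The sum is also closed, because a finite sum of closed functions that are bounded below locally is lower semicontinuous. The domain of the sum is $\bigcap_w\operatorname{dom}f_w=[\max_w a_w,\min_w b_w]$, a nonempty interval.

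\emph{Computation and record count.} The derivative of $\sum_wp_wf_w$ on the open domain is the sum of the scaled derivatives. Its breakpoints are therefore contained in the union of the breakpoints of the $f_w$ that fall strictly inside the common domain. At each such breakpoint $x$, the jump of the sum equals $\sum_{w:\,x\in\mathrm{bp}(f_w)}p_w\,\Delta_w(x)$, which is non-negative, so convexity is preserved as an invariant and no hull step is needed.

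The algorithm scans the $K$ sorted position lists with a heap of size $K$, or as a balanced tree of pairwise merges. At each position it accumulates the scaled jumps, and coinciding positions are merged canonically into one record. It drops records outside $[\max a_w,\min b_w]$ and folds their contribution into the initial slope of the sum. This initial slope is $\sum_wp_w\,f_w'(\max_w a_w^{+})$, computed by one prefix evaluation of each operand. Finally it sets the anchor value $\sum_wp_wf_w(x_0)$ at the common point $x_0$.

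Each output record corresponds to at least one input record, apart from $O(1)$ records for the new domain endpoints, the initial slope and the anchor. This gives the bound $\sum_wm_w+O(1)$. The cost is $O\bigl(\sum_wm_w\log K\bigr)$ comparisons for the heap merge, or linear in the pairwise-merge form as in \cref{prop:closure}.

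\emph{Main obstacle.} The delicate step is bookkeeping at the boundary, not the interior merge. When the intersection of domains truncates some operand, the records lying outside the truncation must be absorbed into the initial slope rather than discarded. When the common domain degenerates to a single point, the result is an indicator plus a constant, and I would check that this still fits the representation within the $O(1)$ allowance. I would handle both cases by treating domain endpoints themselves as records carrying infinite jumps, so that the restriction is just another merged list, as in the restriction operation of \cref{prop:closure}.
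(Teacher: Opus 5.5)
Your proposal is correct and follows essentially the paper's route. Scaling by $p_w>0$ keeps the class, iterating the pointwise sum of \cref{prop:closure} gives closure with properness from the common point, and the direct $K$-way merge bounds the count by one output record per input record plus $O(1)$ for the domain endpoints; your explicit folding of truncated records into the initial slope is more careful bookkeeping than the paper writes out.

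One side remark is off, although the statement claims no running time. A balanced tree of pairwise merges costs $O\bigl(\sum_w m_w\log K\bigr)$, the same as the heap, not linear time, and the per-merge cost comes from \cref{prop:opcost}, not \cref{prop:closure}.
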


\begin{proposition}[Output-sensitive operator cost]
\label{prop:opcost}
Let $f$ and $g$ be given by canonical ordered representations with $m$ and
$n$ derivative records. Conjugation costs $O(m)$. Pointwise addition and
infimal convolution cost $O(m+n)$ with array-backed storage, and
$O\bigl((m+n)\log(m+n)\bigr)$ with an ordered-map container. Every output
carries at most $m+n+O(1)$ records.
\end{proposition}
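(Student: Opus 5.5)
The plan fixes the representation first and then charges each operation to a single linear scan of its input records. A canonical representation of a closed proper convex PWL $f$ with $m$ derivative records is read as the ordered list of pairs $(x_i,p_i)$ describing the subgradient graph $\{(x,p):p\in\partial f(x)\}$. This graph is a monotone staircase: vertical segments sit at breakpoints, horizontal segments are affine pieces, and the domain endpoints and asymptotic slopes are recorded. We add one anchor value $f(x_0)$. Two facts from \cref{subsec:representations} carry the proof: convexity means both coordinates are nondecreasing along the list, and $p\in\partial f(x)\iff x\in\partial\conj f(p)$. With these in hand, each cost bound reduces to bounding one pass over the records and bounding how many records that pass emits.

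\emph{Conjugation.} By the inverse-relation property, the subgradient graph of $\conj f$ is the transpose of that of $f$. The new record list is therefore the old list with the coordinates swapped, read in the same order. The only extra work is the anchor: $\conj f(p_0)=p_0x_0-f(x_0)$ for any $p_0\in\partial f(x_0)$, which takes $O(1)$. Canonicalising means merging consecutive records that have become collinear or duplicated; this is a single left-to-right scan. The total is $O(m)$, with at most $m+O(1)$ records out (the $O(1)$ accounts for the endpoint and asymptote records).

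\emph{Addition.} On $\operatorname{dom}f\cap\operatorname{dom}g$, the derivative of $f+g$ jumps exactly where $f$ or $g$ jumps, and the jump sizes add. The output breakpoints are thus the union of the two breakpoint sets, each clipped to the common domain. We compute this by a two-pointer merge of the lists sorted by position, accumulating the slopes; the anchor is evaluated at one common point. Each step of the merge consumes at least one input record, so the cost is $O(m+n)$ and the output has at most $m+n+O(1)$ records.

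\emph{Infimal convolution.} We use \eqref{eq:flt-infconv}: conjugate both operands, add by the merge above (now sorted by slope), and conjugate back. These are three linear passes, giving $O(m+n)$, and the record bound is inherited from the addition step. The obstacle is that \eqref{eq:flt-infconv} is only valid when the closedness condition holds. For this step we rely on \cref{prop:closure}: the convolution is proper, polyhedral, and its infimum is attained. The record count is unchanged by this, because the transpose of the merged list is exactly the list of slopes merged in order, as \cref{fig:flt-pipeline} illustrates.

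\emph{Container cost.} With an ordered-map container, the merge becomes insertion of the smaller list into the larger, or more simply a rebuild. Each of the $O(m+n)$ record insertions or lookups costs $O(\log(m+n))$, giving $O\bigl((m+n)\log(m+n)\bigr)$. The output bound does not depend on the container, since it is a property of the represented function.

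The step I expect to need the most care is the canonical-form bookkeeping in the boundary cases: unbounded domains, and a sum or convolution whose domain degenerates to a single point. These are the cases that produce the additive $O(1)$. I would handle them by treating the domain endpoints as sentinel records with infinite slope, as vertical legs of the staircase, so that they flow through the same merge without special-case code.
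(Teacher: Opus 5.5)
Your proposal is correct and follows essentially the same route as the paper. Both arguments rest on the same ingredients: records sorted in position and slope at once; conjugation as one order-preserving coordinate swap; addition as one ordered merge, with a logarithmic factor per ordered-map insertion; infimal convolution as two conjugations and one merge on the slope axis via \eqref{eq:flt-infconv}; and the count of at most one output record per input record, plus $O(1)$ for domain endpoints, which canonical merging can only lower. Your anchor bookkeeping and your sentinel treatment of domain endpoints only make explicit details the paper leaves implicit.
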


The three proofs are collected in \cref{app:proofs}.

\begin{remark}[Horizon totals and the optimal serial algorithm]
\label{rem:tseng-luo}
\Cref{prop:opcost} bounds one operation. Over a horizon of $T$ steps in
which the running value function carries $M_t$ records and the step cost
$k_t$ records, the incremental strategy implemented in \pkg{} --- rewrite
the value function at every step --- costs
$O\bigl(\sum_t(M_{t-1}+k_t)\bigr)$. Since $M_t\le M_{t-1}+k_t+O(1)$, the
worst case is $O(TN)$ with $N=\sum_tk_t$. Tseng and Luo~\cite{Tseng1996}
attain $\Theta(N\log N)$ for the complete serially nested expression with a
global balanced structure, together with a matching comparison lower bound;
we do not claim their bound for the incremental container. In the
experiments of \cref{sec:experiments}, the envelope restrictions interleaved
with every convolution prune $M_t$, and measured totals scale near-linearly
in $T$; \cref{sec:package} describes the array backend that realises the
merge costs above. The same accounting is what the computational study calls
the catalogue law: one chain pays its carried record counts, essentially flat
in the size of the step-cost catalogue, while the planar two-store step of
\cref{subsec:two-store-conjugate} pays the face count of a fixed arrangement,
$F\approx|\mathrm{cat}|^2$ --- \cref{fig:catalogue-law} measures both regimes
across two orders of magnitude of catalogue.
\end{remark}

\section{An exact solver for longitudinal convex programs}
\label{sec:longitudinal}

\subsection{Problem class}
\label{subsec:class}

Let $x_t$ be a local decision with separable convex cost, and let its
running sum be bounded from below and above:
\begin{equation}
\begin{aligned}
\min_{x\in\R^T}\quad &\sum_{t=1}^{T}c_t(x_t),\\
\text{s.t.}\quad &\underline x_t\le x_t\le\overline x_t,\\
&\underline s_t\le s_0+\sum_{\tau\le t}x_\tau\le\overline s_t,
\qquad t=1,\ldots,T.
\end{aligned}
\label{eq:longitudinal}
\end{equation}
We call this class \emph{longitudinal}: arbitrarily many variables, coupled
only through one bounded cumulative state. The solver of this section is
reused unchanged across operations planning; only the data and their
interpretation change:
\begin{itemize}
\item \emph{energy storage} --- $x_t$ a charging power, $s_t$ a state of
charge, its operating regime carried by the step cost (below);
\item \emph{inventory and production smoothing} --- $x_t$ production minus
demand, $s_t$ an inventory; production, overtime and procurement costs;
safety stocks and warehouse limits as envelopes;
\item \emph{reservoir release} --- $x_t$ a release net of known inflows,
$s_t$ a reservoir level; PWL benefit or penalty curves; rule curves as
envelopes;
\item \emph{flexible demand and charging} --- $x_t$ a consumption, $s_t$
the energy served; deadline envelopes from release and due dates; a
terminal singleton for exact delivery;
\item \emph{path flow with inventories} --- $x_t$ a local supply, $s_t$
the flow carried between consecutive nodes; arc capacities as state
bounds, the simplest series--parallel network reading~\cite{Tseng1996}.
\end{itemize}
Storage is the richest recurring example, not the boundary of the method.

\paragraph{Operating physics as step costs.}
For storage the operating regime of a period lives entirely in the shape of
$c_t$, a convex PWL (or PWQ) function of the charging power $x_t$: a linear
$c_t(x)=\pi_t x$ prices a \emph{price taker} at the spot price $\pi_t$; a
\emph{round-trip loss} is the two-slope V with a kink at $x=0$, charging at
$\pi_t/\eta_{\mathrm{in}}$ and discharging at $\pi_t\eta_{\mathrm{out}}$; a
\emph{network tariff} adds one piece on each side of $x=0$ --- a per-MWh
charge on withdrawal and on injection --- still convex PWL; and \emph{price
making} is itself convex PWL when the period's supply is a stack of units with
given marginal costs and available capacities --- charging or discharging
moves storage along the merit order, the exact injection curve of a network LP
(\cref{sec:injection}) --- reducing to the convex quadratic
$c_t(x)=\pi_t x+\tfrac12\delta_t x^2$ (PWQ) only for a smooth affine
inverse-demand law. Self-discharge is the exception:
a geometric decay $s_t=\rho\,s_{t-1}+x_t$ ($\rho\in(0,1]$) is not a cost but
an affine twist of the state recursion, reduced to~\eqref{eq:longitudinal} by
the change of variable $s_t\mapsto s_t/\rho^{t}$, under which the
value-function readout of \cref{subsec:joint-bundle} carries over unchanged.

In the resource-allocation literature the continuous class coincides with
RAP--NC, the allocation problem with nested lower and upper
constraints~\cite{Vidal2019}.

\subsection{Forward recursion}
\label{subsec:forward}

For problem~\eqref{eq:longitudinal}, define the cost of reaching state $s$
after step $t$:
\begin{equation}
V_t(s)=\inf\left\{\sum_{\tau=1}^{t}c_\tau(x_\tau):
s=s_0+\sum_{\tau=1}^{t}x_\tau,\;
\underline x_\tau\le x_\tau\le\overline x_\tau,\;
\underline s_\tau\le s_\tau\le\overline s_\tau\right\}.
\end{equation}
Let $\tilde c_t=c_t+\delta_{[\underline x_t,\overline x_t]}$ and let
$\mathcal R_t f=f+\delta_{[\underline s_t,\overline s_t]}$. Then
\begin{equation}
  V_t=\mathcal R_t\bigl(V_{t-1}\infconv\tilde c_t\bigr),
  \qquad V_0=\delta_{\{s_0\}},
  \label{eq:longitudinal-recursion}
\end{equation}
with terminal optimum $\min_s V_T(s)$. This is exact variable elimination,
whether one calls it dynamic programming or not: every step is one
restriction and one fast infimal convolution, so \cref{prop:closure} keeps
each $V_t$ closed convex PWL (or PWQ) and \cref{prop:opcost} prices each
step. \Cref{fig:recursion} shows one step. The convolution widens the
reachable interval and merges the slopes of the step cost into the value
function; the restriction cuts the result back to the envelope, which is
also what keeps representation sizes small in practice
(\cref{rem:tseng-luo}).

\begin{figure}[t]
\centering
\resizebox{\linewidth}{!}{%
\begin{tikzpicture}[
  x=0.5cm, y=0.5cm,
  axisline/.style={-{Stealth[length=1.6mm]}, thin, black!45},
  vplot/.style={very thick, blue!70!black},
  cplot/.style={very thick, orange!85!black, dash pattern=on 3pt off 2pt},
  rplot/.style={very thick, violet!75!black},
  ghost/.style={very thick, black!25},
  wall/.style={densely dotted, thin},
  bp/.style={circle, fill, inner sep=1.0pt},
  lbl/.style={font=\scriptsize},
  oplbl/.style={font=\scriptsize, align=center, black!70},
  chainnode/.style={font=\scriptsize, draw=black!60, circle, inner sep=1.2pt}
]
\begin{scope}
  \draw[axisline] (-0.7,0) -- (2.8,0) node[below, lbl] {$s$};
  \draw[axisline] (0,-0.25) -- (0,1.9);
  \draw[wall, blue!70!black] (0,1) -- (0,1.6) (2,1) -- (2,1.6);
  \draw[vplot] (0,1) -- (1,0) -- (2,1);
  \node[lbl, blue!70!black] at (2.15,1.5) {$V_{t-1}$};
  \node[lbl] at (1.0,-1.15) {value function};
\end{scope}
\node[oplbl] at (3.6,0.7) {$\infconv$};
\begin{scope}[shift={(5.8,0)}]
  \draw[axisline] (-1.6,0) -- (1.9,0) node[below, lbl] {$x$};
  \draw[axisline] (0,-0.25) -- (0,1.9);
  \draw[wall, orange!85!black] (-1,0.5) -- (-1,1.1) (1,0.5) -- (1,1.1);
  \draw[cplot] (-1,0.5) -- (0,0) -- (1,0.5);
  \node[lbl, orange!85!black] at (1.35,0.95) {$\tilde c_t$};
  \node[lbl] at (0.1,-1.15) {step cost};
\end{scope}
\draw[-{Stealth[length=2mm]}, thick, black!60] (8.1,0.7) -- (9.3,0.7)
  node[midway, above, oplbl] {merge}
  node[midway, below, oplbl] {in dual};
\begin{scope}[shift={(11.2,0)}]
  \draw[axisline] (-1.6,0) -- (3.9,0) node[below, lbl] {$s$};
  \draw[axisline] (0,-0.25) -- (0,1.9);
  \draw[rplot] (-1,1.5) -- (0,0.5) -- (1,0) -- (2,0.5) -- (3,1.5);
  \node[bp, orange!85!black] at (0,0.5) {};
  \node[bp, orange!85!black] at (2,0.5) {};
  \node[lbl, violet!75!black] at (-1.15,1.8) {$V_{t-1}\infconv\tilde c_t$};
  \node[lbl] at (1.1,-1.15) {merged slopes};
\end{scope}
\draw[-{Stealth[length=2mm]}, thick, black!60] (15.6,0.7) -- (16.8,0.7)
  node[midway, above, oplbl] {$\mathcal R_t$}
  node[midway, below, oplbl] {restrict};
\begin{scope}[shift={(18.7,0)}]
  \draw[axisline] (-1.6,0) -- (3.9,0) node[below, lbl] {$s$};
  \draw[axisline] (0,-0.25) -- (0,1.9);
  \draw[ghost] (-1,1.5) -- (0,0.5) (2,0.5) -- (3,1.5);
  \draw[wall, violet!75!black] (0,0.5) -- (0,1.6) (2,0.5) -- (2,1.6);
  \draw[rplot] (0,0.5) -- (1,0) -- (2,0.5);
  \node[lbl, violet!75!black] at (2.5,1.8) {$V_t$};
  \node[lbl, black!60] at (1.0,1.85) {$[\underline s_t,\overline s_t]$};
  \node[lbl] at (1.1,-1.15) {envelope enforced};
\end{scope}
\begin{scope}[shift={(0,-3.6)}]
  \node[chainnode] (v0) at (2.0,0) {$V_0$};
  \node[chainnode] (v1) at (6.0,0) {$V_1$};
  \node[font=\scriptsize] (vd) at (10.0,0) {$\cdots$};
  \node[chainnode] (vT1) at (14.0,0) {$V_{T-1}$};
  \node[chainnode] (vT) at (18.6,0) {$V_T$};
  \draw[-{Stealth[length=1.8mm]}, black!70] (v0) -- (v1)
    node[midway, above, oplbl] {$\mathcal R_1(\,\cdot\infconv\tilde c_1)$};
  \draw[-{Stealth[length=1.8mm]}, black!70] (v1) -- (vd);
  \draw[-{Stealth[length=1.8mm]}, black!70] (vd) -- (vT1);
  \draw[-{Stealth[length=1.8mm]}, black!70] (vT1) -- (vT)
    node[midway, above, oplbl] {$\mathcal R_T(\,\cdot\infconv\tilde c_T)$};
  \draw[-{Stealth[length=1.8mm]}, black!70, dashed]
    (vT) .. controls (14,-1.6) and (7,-1.6) .. (v0)
    node[midway, below, oplbl]
    {backward recovery on the stored functions (\cref{lem:recovery})};
\end{scope}
\end{tikzpicture}%
}
\caption{One step of recursion~\eqref{eq:longitudinal-recursion}. The value
function $V_{t-1}$ is convolved with the restricted step cost $\tilde c_t$
--- in the dual, an ordered merge of slopes --- which widens the reachable
interval and inserts the new slopes $\pm$ those of $\tilde c_t$ (middle);
the envelope restriction then cuts the result back to
$[\underline s_t,\overline s_t]$ (right, cut parts in grey). Below: the
forward sweep stores every $V_t$; the backward pass rereads them to split
each state difference optimally (\cref{lem:recovery}).}
\label{fig:recursion}
\end{figure}

\subsection{Deterministic trajectory recovery}
\label{subsec:recovery}

\begin{lemma}[Backward recovery]
\label{lem:recovery}
Fix $t\le T$ and $s\in\operatorname{dom}V_t$. The set
\begin{equation}
U_t(s)=\argminop_{u}\bigl\{V_{t-1}(u)+\tilde c_t(s-u)\bigr\}
\end{equation}
is a nonempty closed interval, and every $u\in U_t(s)$ satisfies
$V_t(s)=V_{t-1}(u)+\tilde c_t(s-u)$. Consequently, starting from any
$s_T^\star\in\argminop_sV_T(s)$ and selecting, for $t=T,\ldots,1$, any
$s_{t-1}^\star\in U_t(s_t^\star)$, the increments
$x_t^\star=s_t^\star-s_{t-1}^\star$ are optimal
for~\eqref{eq:longitudinal}. In particular, any fixed endpoint rule ---
\pkg{} selects the endpoint nearest to $s_t^\star$, i.e.\ the increment of
smallest magnitude --- is optimal, and no tie-breaking perturbation is
needed on flat optimal faces.
\end{lemma}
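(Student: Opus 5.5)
The proof has three parts. First, $U_t(s)$ is a nonempty closed interval and the infimum is attained. Second, any pointwise selection of minimisers, chained backward from an optimal terminal state, gives an optimal schedule. Third, the endpoint rule is such a selection.

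\emph{Structure of $U_t(s)$.} Fix $s\in\operatorname{dom}V_t$. Because $\operatorname{dom}V_t\subseteq[\underline s_t,\overline s_t]$, the restriction $\mathcal R_t$ adds zero at $s$. So by \eqref{eq:longitudinal-recursion},
\[
V_t(s)=(V_{t-1}\infconv\tilde c_t)(s)=\inf_u\phi_s(u),\qquad
\phi_s(u)=V_{t-1}(u)+\tilde c_t(s-u).
\]
Here $\phi_s$ is the sum of two closed proper convex PWL functions, the second reflected and shifted by $R_s$. It is therefore closed convex PWL, and its domain is bounded because $\tilde c_t$ carries $\delta_{[\underline x_t,\overline x_t]}$, so $\operatorname{dom}\phi_s\subseteq[s-\overline x_t,\,s-\underline x_t]$. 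Proposition~\ref{prop:closure} gives attainment of the infimum at every point of the domain of the convolution, hence $U_t(s)\neq\emptyset$. Alternatively, a closed proper function on a compact interval attains its minimum. The argmin set is a sublevel set of a closed convex function, hence a closed interval. By definition, each $u\in U_t(s)$ satisfies $V_t(s)=\phi_s(u)$.

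\emph{Optimality of the chained selection.} I argue by backward induction. Take $s_T^\star\in\argminop V_T$; it exists because $V_T$ is closed proper convex PWL with bounded domain. Choose $s_{t-1}^\star\in U_t(s_t^\star)$ and set $x_t^\star=s_t^\star-s_{t-1}^\star$. Each choice has finite $\phi$ value, so:
\begin{itemize}
\item $\tilde c_t(x_t^\star)<\infty$, which gives the box constraint on $x_t^\star$;
\item $V_{t-1}(s_{t-1}^\star)<\infty$, which places $s_{t-1}^\star\in\operatorname{dom}V_{t-1}\subseteq[\underline s_{t-1},\overline s_{t-1}]$.
\end{itemize}
This keeps the induction inside the domains. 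At the bottom, $V_0=\delta_{\{s_0\}}$ forces $s_0^\star=s_0$. Telescoping the equalities $V_t(s_t^\star)=V_{t-1}(s_{t-1}^\star)+c_t(x_t^\star)$ gives $\sum_t c_t(x_t^\star)=V_T(s_T^\star)=\min_sV_T(s)$. This is the optimal value of \eqref{eq:longitudinal}, because $V_T$ is by definition the infimum over feasible schedules ending at $s$, and the recursion is exact elimination. The schedule is feasible and attains that value, so it is optimal.

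\emph{Endpoint rule.} Since $U_t(s)$ is a compact interval $[a,b]$, the nearest endpoint to $s_t^\star$ is a well-defined element of $U_t(s_t^\star)$. Equivalently, the rule picks the element of $U_t(s_t^\star)$ minimising $|x_t|$. It is therefore one admissible selection, and the previous step applies to it. No perturbation is needed, because optimality never used uniqueness of the minimiser.

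The main obstacle is the justification that the recursion \eqref{eq:longitudinal-recursion} equals the defining infimum $V_t$ with attainment. This is the inductive identity $V_t=\mathcal R_t(V_{t-1}\infconv\tilde c_t)$, and it rests on splitting the infimum over $(x_1,\dots,x_t)$ into an infimum over $s_{t-1}$ followed by one over $x_t$. I would state this splitting explicitly, since it is what makes the telescoped cost equal the true optimum rather than merely an upper bound.
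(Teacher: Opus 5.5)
Your proof is correct and follows essentially the same route as the paper's: the restriction vanishes on $\operatorname{dom}V_t$, the closure proposition gives attainment and hence a nonempty closed argmin interval, and finiteness at each selected $u$ yields both the increment bounds and the envelope at $t-1$. Telescoping then gives a feasible schedule of cost $\min_s V_T(s)$, of which the endpoint rule is merely one admissible selection.

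The ``obstacle'' you flag is taken by the paper as the exact-elimination identity of the forward recursion. In any case only its easy direction is needed here: any feasible schedule satisfies $V_t(s_t)\le V_{t-1}(s_{t-1})+\tilde c_t(x_t)$, and hence costs at least $\min_s V_T(s)$.
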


The proof is in \cref{app:proofs}.
The forward pass stores the record list of every $V_t$; recovery then costs
one search of a stored list per step. \Cref{alg:longitudinal} summarises the
complete solver.

\begin{algorithm}[t]
\caption{Longitudinal solve by restricted fast infimal convolutions}
\label{alg:longitudinal}
\begin{algorithmic}[1]
\Require costs $c_t$, increment bounds $[\underline x_t,\overline x_t]$,
envelopes $[\underline s_t,\overline s_t]$, initial state $s_0$
\State $V_0\gets\delta_{\{s_0\}}$
\For{$t=1,\dots,T$}\Comment{forward pass: exact variable elimination}
  \State $\tilde c_t\gets c_t+\delta_{[\underline x_t,\overline x_t]}$
  \State $V_t\gets\bigl(V_{t-1}\infconv\tilde c_t\bigr)
         +\delta_{[\underline s_t,\overline s_t]}$
         \Comment{conjugate, merge, conjugate back (\cref{prop:opcost})}
  \State store the record list of $V_t$
\EndFor
\State $s_T^\star\gets$ deterministic selection in $\argminop_s V_T(s)$
\For{$t=T,\dots,1$}\Comment{backward pass (\cref{lem:recovery})}
  \State $s_{t-1}^\star\gets$ deterministic selection in $U_t(s_t^\star)$;
         \quad $x_t^\star\gets s_t^\star-s_{t-1}^\star$
\EndFor
\State \Return trajectory $x^\star$, value functions $V_{0},\dots,V_T$
\end{algorithmic}
\end{algorithm}

\subsection{Forward--backward factorisation}
\label{subsec:forward-backward}

The forward sweep is not the only reading of~\eqref{eq:longitudinal}. Cutting
the horizon at any $t$ splits it into a prefix and a suffix that communicate
only through the state at the cut. Write $F_t=V_t$ for the cost-to-arrive of
\cref{subsec:forward}, and define the cost-to-go
\begin{equation}
B_t(s)=\inf\Bigl\{\sum_{\tau>t}c_\tau(x_\tau):
x_{t+1:T}\text{ feasible from }s_t=s\Bigr\}.
\label{eq:cost-to-go}
\end{equation}
The backward recursion initially contains $s+x$ rather than $s-x$;
reflecting the state, $\widehat B_t(u)=B_t(-u)$, restores exactly the same
restriction and infimal-convolution operations. Both passes therefore run on
the same FLT kernel and are priced by the same \cref{prop:opcost}.

\begin{proposition}[Forward--backward identity]
\label{prop:forward-backward}
Let $J$ be the optimal value of~\eqref{eq:longitudinal}. For every cut $t$,
\begin{equation}
J=\min_{s}\{F_t(s)+B_t(s)\},
\qquad
s_t^\star\in\argminop_{s}\{F_t(s)+B_t(s)\},
\end{equation}
the minimiser set is a nonempty closed interval, and pinning the state at any
such $s_t^\star$ leaves the optimal value unchanged.
\end{proposition}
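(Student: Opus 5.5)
The plan is to prove the identity by splitting the feasible set of~\eqref{eq:longitudinal} at the cut, and to get attainment and the interval structure from the polyhedral closure results of \cref{sec:flt}.

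\textbf{Splitting the infimum.} Write any feasible $x$ as a prefix $x_{1:t}$ and a suffix $x_{t+1:T}$, and set $s_t=s_0+\sum_{\tau\le t}x_\tau$. The constraints of~\eqref{eq:longitudinal} separate into prefix constraints and suffix constraints. The increment bounds are per-variable. The envelope constraints for $\tau\le t$ involve only the prefix, and those for $\tau>t$ involve only $s_t$ and the suffix. The objective separates the same way. Hence
\[
J=\inf_s\Bigl\{\inf_{\text{prefix reaching }s}\sum_{\tau\le t}c_\tau
+\inf_{\text{suffix from }s}\sum_{\tau>t}c_\tau\Bigr\}
=\inf_s\{F_t(s)+B_t(s)\}.
\]
This is the standard exchange of an iterated infimum, valid in $\R\cup\{+\infty\}$. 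The constraint $\underline s_t\le s_t\le\overline s_t$ is already built into $F_t$ through $\mathcal R_t$.

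\textbf{Attainment and interval structure.} By \cref{prop:closure} applied along the recursion~\eqref{eq:longitudinal-recursion}, $F_t$ is closed proper convex PWL (or PWQ). The same holds for $B_t$, via the reflected recursion $\widehat B_t$, which uses the same operations. Their sum is closed convex and piecewise linear- or quadratic on a compact domain $\operatorname{dom}F_t\subseteq[\underline s_t,\overline s_t]$. Assuming $J$ is finite (the problem is feasible), the sum is proper. A proper closed convex function on a nonempty compact interval attains its minimum. Its argmin is a sublevel set, hence closed, convex and bounded: a nonempty closed interval. So the infimum is a minimum.

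\textbf{Pinning the state.} Fix $s_t^\star$ in the argmin. Since the infima defining $F_t(s_t^\star)$ and $B_t(s_t^\star)$ are attained, there exist a prefix reaching $s_t^\star$ with cost $F_t(s_t^\star)$ and a suffix from $s_t^\star$ with cost $B_t(s_t^\star)$. For the prefix this follows by \cref{lem:recovery} run backward from $s_t^\star$. For the suffix it follows symmetrically on the reflected recursion, or directly: a feasible polyhedral program with a convex PWL/PWQ objective bounded below attains its optimum. Concatenating the two pieces gives a feasible $x$ with $s_t=s_t^\star$ and cost $J$. Adding the constraint $s_t=s_t^\star$ can only raise the optimum, and this $x$ shows it does not.

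\textbf{Expected obstacle.} The only delicate point is attainment, namely that the infima in $B_t$ and in the outer minimisation are actually reached rather than merely approached. I would handle it by the polyhedral closedness of \cref{prop:closure} together with the compactness of the envelope. The infeasible case, $J=+\infty$, is excluded or treated as trivial.
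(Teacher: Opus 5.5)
Your proof is correct and follows essentially the same route as the paper: you lower-bound every feasible plan by $F_t(s)+B_t(s)$ at its cut state, then concatenate an attaining prefix and suffix at a minimiser, with attainment and the interval structure of the minimiser set coming from \cref{prop:closure}. The only difference is in how the outer minimum is attained: you use compactness of the envelope, while the paper uses the fact that the closed proper polyhedral sum $F_t+B_t$ is bounded below (by $J$), which does not require finite envelope bounds.
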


The identity is variable elimination in two directions. It also gives a
useful interpretation of slopes: a subgradient of $F_t$ is the marginal cost
of arriving with an additional unit of state, a subgradient of $B_t$ the
marginal future cost of that unit, and their sum measures the pressure of the
two halves of the horizon on the state at the cut. Both sequels consume this
factorisation --- \cref{sec:stochastic} averages cost-to-go functions over
scenarios at a cut, \cref{sec:investment} reads capacity marginals off the
slopes of the two curves.

One consequence is worth isolating, because it is what makes the value
function an \emph{operational} object and not only a computational one.
Production-grade studies rarely solve a year at once: they operate it segment
by segment, pricing each device at the segment boundary with a usage value.

\begin{corollary}[Zero truncation error]
\label{cor:rolling-exact}
Chain segment solves over a partition of $\{1,\dots,T\}$, giving each segment
the true cost-to-go $B_t$ at its right boundary as terminal valuation. The
concatenated schedule attains the optimal value of~\eqref{eq:longitudinal}.
\end{corollary}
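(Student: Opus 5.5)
The plan is to reduce the claim to \cref{prop:forward-backward} by induction over the segments of the partition. Let the partition be $\{1,\dots,t_1\},\{t_1+1,\dots,t_2\},\dots,\{t_{k-1}+1,\dots,T\}$ with $t_k=T$ and $t_0=0$. Segment $j$ starts from the state $\hat s_{t_{j-1}}$ produced by the previous segment and solves
\[
\min\Bigl\{\textstyle\sum_{\tau=t_{j-1}+1}^{t_j}c_\tau(x_\tau)+B_{t_j}(s_{t_j})\Bigr\},
\]
subject to the increment bounds and envelopes of those steps, with $B_T=0$ on $\{T\}$. The invariant I will carry is
\[
\sum_{\tau\le t_{j}}c_\tau(\hat x_\tau)+B_{t_{j}}(\hat s_{t_{j}})=J .
\]
At $j=k$ this is exactly the claim, and its base case $j=0$ reads $B_0(s_0)=J$, which is the definition of $J$.

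The induction step rests on one observation: the optimal value of segment $j$ started from $u$ equals $B_{t_{j-1}}(u)$. This is the dynamic-programming (Bellman) identity for the cost-to-go. The infimum in \eqref{eq:cost-to-go} over $x_{t_{j-1}+1:T}$ splits at $t_j$ into an infimum over the segment variables and an infimum over the tail. The tail depends on the segment only through $s_{t_j}$, because the constraints of \eqref{eq:longitudinal} are separable per step given the state. The inner infimum is therefore $B_{t_j}(s_{t_j})$.

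Given the invariant at $j-1$, we have $B_{t_{j-1}}(\hat s_{t_{j-1}})<\infty$, so segment $j$ is feasible. Its minimum is attained because, after eliminating the states, it is a longitudinal problem whose last step cost absorbs the PWL function $B_{t_j}$. By \cref{prop:closure} every infimum in the recursion is then attained (equivalently, run \cref{lem:recovery} backward on that segment). The attained value is $B_{t_{j-1}}(\hat s_{t_{j-1}})$, which gives
\[
\sum_{\tau=t_{j-1}+1}^{t_j}c_\tau(\hat x_\tau)+B_{t_j}(\hat s_{t_j})=B_{t_{j-1}}(\hat s_{t_{j-1}}).
\]
Adding the accumulated cost $\sum_{\tau\le t_{j-1}}c_\tau(\hat x_\tau)$ to both sides propagates the invariant.

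Feasibility of the concatenated schedule follows directly. Each segment respects its own bounds and envelopes and starts where the previous one ended, so the concatenation is feasible for \eqref{eq:longitudinal}. Since $B_T(\hat s_T)=0$, its cost is $J$. I expect no step to be a real obstacle. The only delicate point is attainment together with finiteness at the boundary states. Finiteness comes from the invariant, and attainment from the closure result (\cref{prop:closure}), which holds once the segment problem is recognised as an instance of the longitudinal class with a PWL terminal cost.
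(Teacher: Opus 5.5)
Your proof is correct and follows essentially the paper's route: an induction over the partition boundaries in which each step splits the remaining horizon at the next cut. The paper invokes \cref{prop:forward-backward} on the problem pinned at the previous boundary state; you use the equivalent Bellman identity $B_{t_{j-1}}(u)=\min\{\text{segment cost}+B_{t_j}(s_{t_j})\}$ and carry an explicit invariant, which treats the later segments more cleanly than the paper's one-line sketch.

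One wording fix: $B_{t_j}$ is a function of the cumulative state $s_{t_j}$, not of $x_{t_j}$, so it cannot be absorbed into the last step cost. Attainment comes instead from minimising the segment's forward value function plus $B_{t_j}$, a closed proper convex PWL sum (\cref{prop:closure}), exactly as in the proof of \cref{prop:forward-backward}.
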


Both proofs are in \cref{app:proofs}.

The corollary is a statement about \emph{error decomposition}, and that is
its use. The gap of a rolling policy driven by an estimate $\widehat B$
splits into an \emph{information} error --- $\widehat B$ against $B$ --- and
a \emph{truncation} error, and \cref{cor:rolling-exact} says the second term
is exactly zero. A policy whose water values are fitted rather than computed
cannot exhibit that split, because it has no exact $B$ to measure against.

\begin{remark}[The usage-value curve is a by-product, and it is isotone by
construction]
\label{rem:usage-value}
$B_t$ \emph{is} the usage-value curve that adequacy studies estimate: its
one-sided derivatives are the state-dependent marginal values of stored
energy, in currency per unit of energy, and they decrease in the state by
convexity. Practice obtains such curves by regressing a cloud of
perfect-foresight duals and then imposing monotonicity and a scarcity floor
by hand~\cite{Wolfgang2009}; here monotonicity is not imposed but inherited
from convexity, and the curve is the output of the same backward pass that
\cref{prop:forward-backward} already requires. \Cref{sec:stochastic} replaces
the scarcity prior by an expectation over futures, which is the principled
version of the same correction.
\end{remark}

\subsection{Complexity and memory}
\label{subsec:solver-cost}

Each forward step is one merge, priced by \cref{prop:opcost}; the horizon
totals, and the pruning role of the interleaved envelope restrictions, are
those analysed in \cref{rem:tseng-luo}. When each period contributes a
bounded number of records, as in the price-taking and merit-order storage
models, the input size is $N=\sum_tk_t=O(T)$. Recovery rereads all forward
value functions, hence $O(\sum_tM_t)$ memory --- contiguous arrays in the
flat backend of \cref{sec:package}; the measured scaling of the complete
solver is reported in \cref{sec:experiments}.

\subsection{What the class excludes}
\label{subsec:exclusions}

Unit commitment, minimum up/down constraints and discrete lot sizes destroy
convexity. Ramping constraints couple consecutive increments in addition to
the cumulative state and require an enlarged state or a separate
coordination. Multiple interacting inventories increase the separator
dimension --- exactly by one per extra inventory, which is why two of them are
still eliminable and \cref{subsec:two-store-conjugate} does eliminate them,
while three are not addressed here. General meshed network constraints do not admit a
one-dimensional serial elimination. These problems may still use the
longitudinal solver as a subproblem --- \cref{sec:coordination} does exactly
this, and a ramping generator is the cleanest instance: its ramp increment
$\Delta g_t\in[-R_{\mathrm{dn}},R_{\mathrm{up}}]$ and cumulative output $g_t$
under box limits are \emph{themselves} a longitudinal chain, so the recursion
prices the unit as a block. That a single ramp-constrained unit admits an
exact dynamic-programming treatment is not new --- it is the setting of
Frangioni and Gentile~\cite{FrangioniGentile2006}, who solve the nonlinear
single-unit commitment problem with ramping, commitment binaries included;
what the observation buys here is only that such a unit enters the
decomposition as one more scalar-state chain, at no change of state
dimension. What the single-chain recursion cannot do is
\emph{couple} that block to storage or to the network --- and note that this is a
statement about the \emph{single} chain: a \emph{pair} of co-located stores is
coupled exactly, by carrying the bivariate value function
(\cref{subsec:two-store-conjugate}), which is elimination and not coordination. Co-located with a
storage, or joined to one across a line, the fleet and the store meet only
through the node's kinked merit-order cost, and the pair then exhibits a
\emph{block-coordinate lock}: each chain is exactly optimal given the other,
yet a fixed point of the alternation need not be the joint optimum --- the
joint optimum is itself a fixed point, but an alternation started cold can
settle at a strictly suboptimal one, because moving either unit alone need
not cross the merit-order kink that a joint move would cross
(\cref{subsec:alternation}). Pricing the block is therefore necessary but not
sufficient; the joint refinement that removes the lock is the coordination
construction of \cref{subsec:masters}, not the recursion above, and it returns
a certificate rather than an unverified fixed point --- at a cost that confines
it to small groups (\cref{subsec:masters,subsec:ramps-bench}).

\begin{remark}[Where exactness fails without convexity]
\label{rem:nonconvex}
Convexity, not piecewise linearity, is the load-bearing hypothesis: the
recursion~\eqref{eq:longitudinal-recursion} is valid variable elimination
for arbitrary costs, and a nonconvex PWL function is still PWL, but the
kernel guarantees of \cref{sec:flt} fail in three distinct ways. First,
the canonical representation no longer exists --- a nonconvex derivative
is not a monotone position--slope relation --- and \pkg{} therefore
rejects such input at construction. Second, conjugation retains only the
convex envelope ($f^{**}=\operatorname{conv}f$); a single global
minimisation would survive, since
$\min f=\min\operatorname{conv}f$, but restrictions do not commute with
the envelope, so from the first restriction applied after a convexified
convolution the propagated value can fall strictly below the true optimum,
and \cref{lem:recovery} fails: $U_t(s)$ need not be an interval, and a
recovered trajectory can cost strictly more than the claimed
$\min_sV_T(s)$. Third, propagating the true nonconvex function instead
forfeits \cref{prop:opcost}: an infimal convolution of nonconvex PWL
operands is a pointwise minimum of shifted sums carrying up to $mn$
pieces, so representations grow multiplicatively along the horizon rather
than additively. The exclusions above are therefore boundaries of the fast
kernel, not of dynamic programming.
\end{remark}

\section{Investment in energy and power capacity}
\label{sec:investment}

The solver of \cref{sec:longitudinal} returns more than the minimiser of one
instance: at every cut it retains the complete convex cost of connecting the
past to the future. This section turns that output into investment oracles
for a single storage. Energy and power capacity enter
\eqref{eq:longitudinal} through right-hand sides, so their marginal
operating values are exact duals recoverable from one solve: the energy
marginal directly from the value-function slopes of the forward--backward
factorisation of \cref{subsec:forward-backward}
(\cref{subsec:param-value,subsec:capacity-parameter}), the power
marginal from a water-value chain read off the optimal trajectory
(\cref{subsec:joint-bundle}); sizing in the
energy capacity is then a Legendre transform along an explicit slope curve
(\cref{subsec:sizing}), and joint energy--power sizing is driven by exact
Benders cuts in a level bundle (\cref{subsec:joint-bundle}). Every object
exchanged is an ordered list of breakpoints and slopes --- precisely what
the engine of \cref{sec:flt} manipulates; the spatial counterpart of these
oracles, the injection-cost curve of one network hour, is developed in
\cref{sec:injection}.

\subsection{The parametric operating value}
\label{subsec:param-value}

Consider a storage with power $p$, energy capacity $E$ and convex per-step
costs $c_t$. Its optimal operating cost is
\begin{equation}
J(E,p)=\min_x\left\{\sum_{t=1}^{T}c_t(x_t):
-p\le x_t\le p,\quad 0\le\sum_{\tau\le t}x_\tau\le E\right\}.
\label{eq:param-value}
\end{equation}
The initial level can be included by shifting the cumulative bounds. This is
an instance of~\eqref{eq:longitudinal} in which the two capacities enter only
as right-hand sides --- $p$ in the increment bounds, $E$ in the cumulative
ones. The forward--backward factorisation of \cref{subsec:forward-backward}
therefore applies at every cut, with the two curves now carrying $(E,p)$
through those right-hand sides:
\begin{equation}
J(E,p)=\min_{s\in[0,E]}\{F_t(s)+B_t(s)\}
\label{eq:param-fb}
\end{equation}
by \cref{prop:forward-backward}. The oracles of this section read the
capacity marginals off the slopes of $F_t$ and $B_t$: a subgradient of $F_t$
is the marginal cost of arriving with an additional unit of energy, a
subgradient of $B_t$ the marginal future cost of that unit.

\subsection{Capacity as a right-hand-side parameter}
\label{subsec:capacity-parameter}

Both $E$ and $p$ enter~\eqref{eq:param-value} through right-hand sides. By
convex perturbation theory, $J$ is convex, non-increasing and PWL in $(E,p)$
when the local costs are PWL. Let $\mu_t^+\ge0$ denote the multiplier of the
upper state bound $s_t\le E$, and $\nu_t^+,\nu_t^-\ge0$ those of the power
bounds. At points of differentiability,
\begin{equation}
\frac{\partial J}{\partial E}=-\sum_t\mu_t^+,
\qquad
\frac{\partial J}{\partial p}=-\sum_t(\nu_t^++\nu_t^-).
\label{eq:rhs-partials}
\end{equation}
We call
\begin{equation}
M_E(E,p)=-\frac{\partial J}{\partial E}=\sum_t\mu_t^+
\label{eq:marginal-capacity}
\end{equation}
the marginal operating value of energy capacity. Convexity of $J$ implies
that $M_E$ is non-increasing in $E$.

The multipliers can be recovered without resolving a dual LP. Perturb only
the upper state bound at time $t$. By \cref{prop:forward-backward}, the
corresponding one-dimensional value is the minimum of $G_t=F_t+B_t$ on the
perturbed interval. The multiplier is thus the inward slope at the upper
endpoint.

\begin{theorem}[Marginal capacity from value-function slopes]
\label{thm:marginal-capacity}
Let $F_t$ and $B_t$ be built at capacity $E$. With one-sided derivatives
chosen consistently with the required subgradient,
\begin{equation}
M_E(E,p)=
\sum_{t:E\in\operatorname{dom}F_t\cap\operatorname{dom}B_t}
\max\{0,-F_t'(E)-B_t'(E)\}.
\label{eq:marginal-slope}
\end{equation}
Times at which $E$ is unreachable are excluded by the domain condition.
\end{theorem}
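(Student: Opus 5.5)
The plan is to prove the theorem one time index at a time, then sum. For fixed $t$, perturb only the upper state bound at that time: replace $s_t\le E$ by $s_t\le E+\varepsilon$ and leave every other bound, including the capacity bounds at the other times, at $E$. Call the resulting optimal value $J_t(\varepsilon)$. By convex perturbation theory for the polyhedral program~\eqref{eq:param-value}, $J_t$ is convex PWL in $\varepsilon$. The multiplier $\mu_t^+$ of the constraint $s_t\le E$ (chosen in the subdifferential consistently with the one-sided convention of the statement) equals $-J_t'(0^+)$.

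Next, read $J_t$ off the forward--backward factorisation. The perturbation touches only the restriction at time $t$, which is the last restriction in $F_t$ and is absent from $B_t$, whose infimum runs over $\tau>t$. So the functions $F_t$ and $B_t$ built without the time-$t$ envelope, called $\widehat F_t=V_{t-1}\infconv\tilde c_t$ and $B_t$, are unchanged. \Cref{prop:forward-backward} then gives
\[
J_t(\varepsilon)=\min_{s\in[0,E+\varepsilon]}\bigl\{\widehat F_t(s)+B_t(s)\bigr\}.
\]
Write $G_t=\widehat F_t+B_t$, which is convex PWL by \cref{prop:closure}, and note that $G_t=F_t+B_t$ on $[0,E]$. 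Minimising a convex function of one variable over $[0,E+\varepsilon]$ has right derivative in $\varepsilon$ at $0$ equal to $\min\{0,G_t'(E^+)\}$ when the unconstrained-side slope is used, or, in the inward convention of the statement, $-\max\{0,-G_t'(E)\}$. The reason is that if $G_t$ is decreasing at $E$, enlarging the interval lowers the minimum at rate $|G_t'|$, and otherwise the constraint is slack and contributes zero. This yields $\mu_t^+=\max\{0,-F_t'(E)-B_t'(E)\}$ for every $t$ with $E\in\operatorname{dom}F_t\cap\operatorname{dom}B_t$.

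If $E$ lies outside this intersection, the state cannot sit at $E$ at time $t$ in any feasible schedule. By convexity of the feasible set of $s_t$, the constraint $s_t\le E$ is then not active near the optimum, so $\mu_t^+=0$ and those times drop out. Summing over $t$ and using~\eqref{eq:rhs-partials}--\eqref{eq:marginal-capacity} gives~\eqref{eq:marginal-slope}.

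The main obstacle is justifying the summation: $M_E$ is a derivative with respect to moving all upper bounds together, whereas each term above comes from moving one bound alone. At points of differentiability the gradient of the multi-parameter value function $(E_1,\dots,E_T)\mapsto J$ exists and equals the vector of multipliers. The plan is to invoke the chain rule along the diagonal, which is exact because $J$ is polyhedral and hence directionally differentiable with a linear directional derivative there. At kinks, "consistently chosen one-sided derivatives" must be taken as the right derivatives of $G_t$ at $E$, corresponding to the $E^+$ direction. The delicate case is a time $t$ where $E$ is the right endpoint of $\operatorname{dom}B_t$, so that $B_t'(E^+)=+\infty$. Here I would argue that the term is then $0$, which is consistent with $\max\{0,\cdot\}$, or else use the left derivatives uniformly to match the $E^-$ direction.
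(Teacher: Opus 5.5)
Your route is the paper's own. You perturb the single ceiling $s_t\le E$, read the perturbed value as $\min_{[0,E+\varepsilon]}(\widehat F_t+B_t)$ through \cref{prop:forward-backward}, differentiate a one-dimensional minimum over a growing interval, sum over $t$, and dispose of unreachable times by convexity. The per-$t$ computation and the domain guard are sound. One precision matters for what follows. The right derivative in $\varepsilon$ uses the outward slope $\widehat G_t'(E^{+})$, and the left derivative uses the inward slope $G_t'(E^{-})$. The quantities $\max\{0,-\widehat G_t'(E^{+})\}$ and $\max\{0,-G_t'(E^{-})\}$ are the \emph{smallest} and \emph{largest} values of $\mu_t^+$ over the optimal dual set; they are not two conventions for one number.

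The gap is the summation, the step you flag yourself. Let $\widetilde J(E_1,\dots,E_T)$ be the value with separate ceilings. The chain rule along the diagonal needs $\widetilde J$ to be differentiable at $E\mathbf 1$, and differentiability of $E\mapsto J(E,p)$ does not give that. Where $\widetilde J$ is not differentiable, its directional derivative is only sublinear: $\widetilde J'(E\mathbf 1;\mathbf 1)\le\sum_t\widetilde J'(E\mathbf 1;e_t)$. So uniform right derivatives underestimate $-J'(E^{+})$, and uniform left derivatives overestimate $-J'(E^{-})$.

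This happens in the model class. Take $T=3$, $s_0=0$, $E=1$, power bound $10$, $c_1\equiv0$, $c_2(x)=\max\{4x,6x\}$ (a lossy hour) and $c_3(x)=10x$. Then $J(E,p)=-10E$ near $E=1$, so $J$ is differentiable with $M_E=10$. But relaxing $s_1\le E$ alone gains at rate $4$: you must sell the extra unit in hour $2$. Relaxing $s_2\le E$ alone also gains at rate $4$: buy the unit in hour $2$ at $6$ and sell it at $10$. The ceiling $s_3\le E$ is slack. Your recipe therefore returns $8$, and uniform left derivatives return $12$.

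The optimal duals are $\mu_1^+=w_2$ and $\mu_2^+=10-w_2$ with $w_2\in[4,6]$. The loss kink of the idle full hour does not pin its water value, so the multiplier mass is shared between two adjacent binding cuts. Mixing the right derivative at $t=1$ with the left at $t=2$ happens to recover $10$, but picking that mix is a joint decision, not a per-constraint one.

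The missing idea is that the per-$t$ terms must be read off \emph{one} dual selection. The water-value chain of \eqref{eq:water-value-readout} does exactly this, and it is the cure the paper applies to $M_P$ in \cref{subsec:joint-bundle}. Otherwise the claim must be restricted to points where every $\mu_t^+$ is unique, and there your chain-rule argument is correct. The paper's sketch passes over this step with the same assertion (exactness ``at points of differentiability of $J(\cdot,p)$''), so the example bears on it as well.
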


\begin{proof}[Proof sketch]
Fix $t$ with $E\in\operatorname{dom}F_t\cap\operatorname{dom}B_t$ and
perturb the single constraint $s_t\le E$ to $s_t\le E+\varepsilon$,
$\varepsilon\ge0$, leaving all other bounds unchanged. By
\cref{prop:forward-backward} applied at the cut $t$, the perturbed value is
$\min_{s\in[0,E+\varepsilon]}G_t(s)$, where on $(E,E+\varepsilon]$ the
function $G_t$ prolongs the pre-restriction pieces stored during the two
passes. This value is convex in $\varepsilon$, and its one-sided derivative
at $\varepsilon=0$ equals $-\max\{0,-F_t'(E)-B_t'(E)\}$: enlarging the
interval lowers the minimum exactly when $G_t$ still decreases at its upper
endpoint. This quantity is $-\mu_t^+$ for the multiplier of the constraint
$s_t\le E$, with the one-sided choice matching the selected subgradient at
kinks of $J$. Summing the per-constraint derivatives over $t$ --- exact at
points of differentiability of $J(\cdot,p)$, and with consistent one-sided
choices elsewhere --- gives \eqref{eq:marginal-slope}; unreachable times
contribute nothing, which is the domain guard.
\end{proof}

The domain guard is essential. A common ceiling does not imply that the
device can reach it at every time, and attributing a multiplier to an
unreachable endpoint creates spurious capacity value. Equation
\eqref{eq:marginal-slope} is evaluated on the same ordered slope maps as the
optimisation itself, so offsets discarded during the recursion are
irrelevant. \Cref{sec:experiments} validates the resulting marginal values
against finite differences and LP duals. One construction of this paper
escapes the forward--backward pair: the two-store block of
\cref{sec:pair}. Its marginals are obtained by differentiating the
bivariate recursion itself, once that recursion is available
(\cref{subsec:marginal-joint}).

\subsection{Sizing as a Legendre transform in the capacity variable}
\label{subsec:sizing}

Adding linear investment costs gives the convex sizing problem
\begin{equation}
\min_{E,p\ge0}\ J(E,p)+\kappa_EE+\kappa_Pp.
\label{eq:sizing-problem}
\end{equation}
Fix $p$ and write $J_p=J(\cdot,p)$, extended by $+\infty$ to negative
capacities. Then
\begin{equation}
\min_{E\ge0}\{J_p(E)+\kappa_EE\}=-\conj{J_p}(-\kappa_E),
\label{eq:sizing-conjugate}
\end{equation}
so sizing is one more Legendre transform, taken in the capacity variable
rather than the state; since $J_p$ is non-increasing, $\conj{J_p}$ is
finite precisely on slopes $-\kappa_E\le0$, the price cone. The inversion
rule of \cref{subsec:representations} is the sizing rule:
$E^\star\in\partial\conj{J_p}(-\kappa_E)$, i.e.\
$-\kappa_E\in\partial J_p(E^\star)$ --- build until the marginal operating
value $M_E$ falls to the investment price.

No closed form of $\conj{J_p}$ is available, since $E$ enters every
restriction of the recursion and each evaluation of $J_p$ is one FLT pass.
But a pass at $E$ returns $J_p(E)$ together with the supporting slope
$-M_E(E,p)$ of \cref{thm:marginal-capacity}, hence, by Fenchel's
equality~\cite[\S23]{Rockafellar1970}, one exact record of $\conj{J_p}$;
and intersections of successive support lines jump from one affine piece
to the next. A finite walk thus assembles $\conj{J_p}$ on the visited
slope range without sampling a capacity grid, and its output answers the
fixed-$p$ problem for \emph{every} investment cost in that range at once:
$\kappa_E\mapsto E^\star$ is the staircase inverse of $M_E$, its jumps are
the break-even prices, and a new $\kappa_E$ is a lookup --- the reuse
quantified in \cref{subsec:parametric-reuse}. In the lossless linear
arbitrage case with constant power the staircase is explicit, with layers
at $E=kp$; with nonlinear local costs or time-varying power the
breakpoints are data-dependent but the oracle is unchanged. Minimising
over $p$ afterwards is a second one-dimensional conjugate, of the convex
partial minimum $p\mapsto\min_E\{J(E,p)+\kappa_EE\}$, so the joint
problem~\eqref{eq:sizing-problem} stays inside the engine's univariate
class; \cref{subsec:joint-bundle} makes it algorithmic.

This route is not claimed to beat a capacity-co-optimising LP in every
regime. A large seasonal optimum may require several increasingly
expensive FLT passes, whereas capacity is only one additional variable in
the LP. Its advantage is instead compositional: it uses the same operating
model, identifies exact breakpoints of the represented problem, and
returns the complete price-parametric solution needed for investment
analysis.

\subsection{Joint sizing: exact Benders cuts and a level bundle}
\label{subsec:joint-bundle}

In both variables, \eqref{eq:sizing-problem} is a two-stage investment
problem: a master chooses $(E,p)$ and an operating subproblem prices the
choice, its value being exactly $J$. An oracle call at $(E_k,p_k)$ that
returns $J(E_k,p_k)$ together with a subgradient
$g_k\in\partial J(E_k,p_k)$ --- componentwise $(-M_E,-M_P)$
by~\eqref{eq:rhs-partials} --- yields the cut
\begin{equation}
J(E,p)\;\ge\;J(E_k,p_k)+\big\langle g_k,(E,p)-(E_k,p_k)\big\rangle,
\label{eq:benders-cut}
\end{equation}
tight at the query point: one support plane of the polyhedral function $J$,
the two-variable analogue of the records \cref{subsec:sizing} walks in one
dimension. Because the oracle is exact, these are Benders optimality
cuts~\cite{Benders1962} without relaxation gap, and finitely many of them
reproduce $J$ on any box. That oracle is the coordination scheme of
\cref{sec:coordination} in whichever regime the instance puts it: the exact
single-unit projection of \cref{prop:one-storage} where each group holds one
intertemporal unit, the joint block of \cref{subsec:masters} otherwise.

\pkg{} minimises \eqref{eq:sizing-problem} over the accumulating cut model
with a level bundle~\cite{LemarechalNemirovskiiNesterov1995}
(\cref{alg:level-bundle}): the model minimum is a lower bound, the best
evaluated point an upper bound, and the next query is the Euclidean
projection --- a two-variable QP --- of the incumbent onto a level set of
the model, which stabilises the search without any stepsize schedule.
Since $J$ is polyhedral, the two bounds meet after finitely many cuts.

\begin{algorithm}[t]
\caption{Level-bundle sizing on the exact operating oracle}
\label{alg:level-bundle}
\begin{algorithmic}[1]
\Require oracle returning $J$ and a subgradient, prices
$(\kappa_E,\kappa_P)$, box $[0,E^+]\times[0,p^+]$, level parameter
$\lambda\in(0,1)$, tolerance $\varepsilon$
\State evaluate the oracle at the start point; initialise the cut model
\Repeat
  \State $\mathrm{lb}\gets\min$ of $\kappa_EE+\kappa_Pp+\text{model}$
         over the box \Comment{LP}
  \State $\mathrm{ub}\gets$ best evaluated investment cost;
         incumbent $\gets$ its argument
  \State project the incumbent onto
         $\{\kappa_EE+\kappa_Pp+\text{model}\le
         \mathrm{lb}+\lambda(\mathrm{ub}-\mathrm{lb})\}$ \Comment{QP}
  \State evaluate the oracle there; add the cut~\eqref{eq:benders-cut}
\Until{$\mathrm{ub}-\mathrm{lb}\le\varepsilon\,|\mathrm{ub}|$}
\end{algorithmic}
\end{algorithm}

Two structural facts sharpen the method. For a price-taking store with
linear hourly prices, $J$ is positively homogeneous of degree one ---
scaling $(E,p)$ scales every feasible schedule --- so in the duration
$d=E/p$ the objective is $p\,[J(d,1)+\kappa_Ed+\kappa_P]$: the joint
problem collapses to the one-dimensional search of \cref{subsec:sizing} in
$d$, the scale sitting at zero or at the power bound according to the sign
of the bracket, and no two-dimensional method is needed. Homogeneity breaks
when storage moves the prices, but the same reduction on the unstored
system's hourly marginal prices still yields a duration estimate that
warm-starts the bundle. Second, the cuts must be exact, and $M_P$
resists the route of \cref{thm:marginal-capacity}: no sum of per-step
envelopes can equal $M_P$, because power bounds bind in charge--discharge
\emph{pairs}. With two steps, prices $c_1<c_2$ and the bang--bang optimum,
$M_P=c_2-c_1>0$, yet relaxing either step's bound alone gains nothing ---
the extra megawatt charged cannot be sold, the extra megawatt to sell could
not have been charged. Chord estimates of $M_P$ fail for the same reason,
and extreme $(E,p)$ corners turn them into invalid cuts.

The pairing is resolved by \emph{sharing} one dual selection across all
steps. Recover the balance multipliers $w_t$ --- the water values ---
from a solved trajectory by complementary slackness: $w$ is constant on
blocks of steps separated by binding state cuts, may jump only upward at
a cut where the store is full and only downward where it is empty, is
anchored by the terminal state ($w=0$ interior, $w\ge0$ empty, $w\le0$
full), is pinned into $\partial c_t(x_t^\star)$ by every
power-interior step, and satisfies $w_t\ge c_t'(p)$ at saturated charges
and $w_t\le c_t'(-p)$ at saturated discharges (one-sided derivatives
taken from inside the box, as in \cref{thm:marginal-capacity}). These
conditions are dual feasibility plus complementarity at $x^\star$; by
strong duality complementary slackness holds between \emph{every} optimal
primal and \emph{every} optimal dual, so any admissible chain extends to an
optimal dual, whatever tie-breaking produced $x^\star$. Two classical facts
in storage form: the balance dual is the marginal value of stored energy,
the \emph{water value} of reservoir scheduling~\cite{StageLarsson1961,%
PereiraPinto1991,CruiseFlatleyGibbensZachary2019}, and under primal
degeneracy the exact marginal is the entire optimal dual face rather than
one basis, whose extreme points return the one-sided shadow
prices~\cite{JansenDeJongRoosTerlaky1997,AdlerMonteiro1992}. Consequently
\begin{equation}
M_P=\!\!\sum_{t:\,x_t^\star=+p}\!\!\big(w_t-c_t'(p)\big)
+\!\!\sum_{t:\,x_t^\star=-p}\!\!\big(c_t'(-p)-w_t\big),
\qquad
M_E=\!\!\sum_{\text{full cuts}}\!\!\big(w_{t+1}-w_t\big),
\label{eq:water-value-readout}
\end{equation}
with a terminal jump added when the horizon ends at a bound, and
\emph{every} admissible chain yields a valid joint subgradient --- all a
cut needs, also at kinks of $J$, where the extreme chains produce the two
one-sided derivatives. Matched charge--discharge pairs inside a block
cancel $w$ and contribute their price spread; unmatched saturated legs
couple neighbouring blocks through the jumps, which are the $\mu$'s of
\eqref{eq:marginal-capacity}. A feasible chain is selected by one
backward--forward interval sweep, so the oracle is one value-function
pass plus $\mathcal O(T)$ postprocessing: pure longitudinal algebra, no
linear program. Against exact LP bound duals and one-sided finite
differences, the readout reproduces $(M_E,M_P)$ and both one-sided
derivatives on linear, lossy and multi-piece local costs, including the
degenerate corners that defeated the chord. It is oblivious to the operating
physics carried by the local costs (\cref{subsec:class}): under the
self-discharge change of variable there the water values inherit a geometric
$\rho^{t}$ factor and \eqref{eq:water-value-readout} applies unchanged in the
rescaled state, so exact $(M_E,M_P)$ carry over to lossy and self-discharging
devices --- all handled by \pkg{}.

\Cref{tab:joint-sizing} exercises \cref{alg:level-bundle} with three
operating oracles --- a warm-started single-node monolith (Gurobi or the
licence-free HiGHS), whose bound duals return $(M_E,M_P)$, and the FLT
value-function oracle of \eqref{eq:water-value-readout} --- against
capacity-and-power co-optimising LPs (again Gurobi and HiGHS), on a
price-responsive single node whose seasonal component crosses several merit
steps, so that one signal yields a daily and a seasonal regime from the
investment prices alone. These are research-harness measurements in the
sense of \cref{subsec:parametric-reuse}. All five methods agree to
\jointSizingRelAgreement{} relative (the bundle tolerance; the daily
instances to machine precision), near-flat optimal faces explaining the
residual spread of the reported optimisers. The value-function oracle is
the fastest instantiation on all \jointSizingInstances{} instances --- ahead
of a monolith operating oracle (Gurobi or HiGHS) and of both co-optimising
LPs --- at \jointSizingFltFractionRange{}$\times$ Gurobi's wall time (a
\jointSizingFltSpeedupRange{}$\times$ speed-up), while returning the exact
dual pair $(M_E,M_P)$ at every visited point; isolating the oracle from the
solver, even a Gurobi-backed monolith oracle is
\jointSizingOracleSpeedupRange{}$\times$ slower than the value-function
readout. These are bounded-catalogue single-node instances; against richer
traced curves the same oracle loses the time comparison outright
(\cref{subsec:sizing-scope}), and the speed-up is not claimed beyond this
regime.

\begin{table}[t]
\centering
\caption{Joint $(E,p)$ sizing on a price-responsive single node (four-technology merit order, value of lost load), one signal, two investment-price regimes. Wall seconds, median of 3 (research harness); bundle times include oracle construction, cut counts in parentheses. Objectives agree to $\le5.8\,10^{-7}$ relative. The three rightmost columns all run \cref{alg:level-bundle} and differ only in the operating oracle: a warm-started Gurobi or HiGHS monolith whose bound reduced costs return $(M_E,M_P)$, versus the FLT value-function readout of~\eqref{eq:water-value-readout}. The first two columns instead solve the joint $(E,p)$ co-optimising LP directly (Gurobi, HiGHS). (\pkg{} 0.27.0.)}
\label{tab:joint-sizing}
\begin{tabular}{llccccc}
\toprule
& & \multicolumn{2}{c}{co-optimising LP} &
\multicolumn{3}{c}{\cref{alg:level-bundle}} \\
\cmidrule(lr){3-4}\cmidrule(lr){5-7}
$T$ & regime ($d^\star$) & Gurobi & HiGHS &
\shortstack{Gurobi\\monolith} & \shortstack{HiGHS\\monolith} &
\shortstack{FLT value\\function} \\
\midrule
5\,000 & daily (4.0\,h) & 0.22 & 0.47 & 0.27 (9) & 0.29 (9) & 0.07 (15) \\
5\,000 & seasonal (143\,h) & 0.24 & 0.47 & 1.05 (14) & 0.57 (14) & 0.07 (14) \\
10\,000 & daily (3.8\,h) & 0.48 & 0.86 & 0.82 (16) & 0.91 (16) & 0.14 (16) \\
10\,000 & seasonal (323\,h) & 0.49 & 2.87 & 1.94 (16) & 1.32 (16) & 0.12 (12) \\
\bottomrule
\end{tabular}
\end{table}

\subsection{Scope: few capacities, not a system expansion model}
\label{subsec:sizing-scope}

The oracles above price the capacities of a \emph{small} number of devices, and
that is the claim. Nothing here asserts that they scale to the many capacities of
a full multi-node, multi-commodity expansion model, and the reader should not
read one into them.

The obstruction is not the oracle but the master it feeds. A capacity master over
a system expansion model is driven by the number of Benders iterations, and that
number grows with the number of capacities faster than the per-iteration saving
compensates: the same arithmetic that governs the cross-node block of
\cref{subsec:masters} governs it here, because both replace one large program by
many solves of a program that is not much smaller. On ring expansions of a dozen
areas we measure a frontal barrier winning outright against either decomposition
variant. Where the decomposition does earn its keep is the direction this section
develops --- a handful of intertemporal units, priced exactly, with the complete
capacity curve as a by-product rather than one sizing point. The
\emph{scenario} axis obeys the same catalogue law as everything else. A
multi-scenario sizing LP is block-diagonal with only the design variables
coupling its blocks, so the frontal barrier scales linearly in the number of
weather years wherever it fits in memory; against traced network-node curves,
whose catalogues run to several hundred slopes, each oracle call pays the
arrangement and the secant-plane route does not close the time gap at any
measured size. On a price-taking instance with a bounded catalogue the same
route wins, and the verdict survives a tight certificate --- provided the
master carries two ingredients a plain cutting-plane loop lacks. Optimality
cuts alone plateau at a gap exactly equal to the capex of the reservoir, the
asset whose \emph{operating} marginal is zero at the optimum: the cuts
faithfully report a flat $\partial\mathrm{Op}/\partial E$, the master
extrapolates and collapses the capacity to zero, and no optimality cut can
exclude the infeasible point it lands on. One feasibility frontier --- $E$ at
least the minimal feasible tank of the electrolyser rating, available in
closed form --- closes the certificate, and a level-bundle stabilisation
makes the closure robust to the starting point. So equipped, the route
reaches a certified \kCurveGapTarget{} gap in \kCurveCutsRangeAllSeeds{} cuts
\emph{independently} of $K$ --- a factor sixteen in scenarios --- and of the
seed (\kCurveSeedsCount{} independent draws), at a per-cut cost linear in
$K$ (\kCurvePerSolveRange~s per block solve throughout). It holds no speed
advantage against a fairly configured frontal, and the comparator lesson is
worth the print. In default
\kCurveFrontalSolver~\kCurveGurobiVersion{} settings the frontal loses its
lead as $K$ grows (\kCurveRatioKFour{} at $K=4$, \kCurveRatioKSixteen{} at
$K=\kCurveCrossoverKAllSeeds$, \kCurveRatioKThirtyTwo{} at $K=32$) and fails
outright at $K=64$ on all \kCurveWallSeedsReproduced{} seeds
(\kCurveFrontalFailModeKSixtyFour): the barrier converges to the optimum in
\kCurveDefaultBarrierIters{} iterations and \kCurveDefaultBarrierSeconds~s,
then the crossover forced by the concurrent optimiser runs away
(\kCurveCrossoverPushIters{} push iterations) into its memory limit.
Disabling it (\texttt{Method=2, Crossover=0}) repairs the frontal
completely: the crossover-free barrier returns the same optimum (relative
agreement \kCurveBarrierPureObjAgreement) in
\kCurveBarrierPureSecondsKSixteen{}/\kCurveBarrierPureSecondsKThirtyTwo{}/\kCurveBarrierPureSecondsKSixtyFour~s
at $K=16/32/64$, ahead of the route by
\kCurveBarrierPureAdvantageRange{} throughout. What survives the fair
comparator is not speed but shape: the route's certificate cost is flat in
$K$ and seed, it degrades predictably where the default frontal's wall is
one setting away, and it never performs the crossover that builds that
wall. The remaining caveat is stated as measured: at $K=64$ under default
settings no external judge survives, and exactness there rests on the
certified bound, on warm and cold starts agreeing to
\kCurveWarmColdGapKSixtyFour, and on the crossover-free barrier reproducing
the incumbent.

That is the regime the experiments of \cref{sec:experiments} report, and the
system-scale question is left open rather than answered badly.

\section{Operation under uncertainty: exact recourse and scenario trees}
\label{sec:stochastic}

The solver of \cref{sec:longitudinal} does not return the optimal cost of one
boundary state; it returns the cost of \emph{every} boundary state at once,
as an explicit convex piecewise-linear function. Once that object is
available for each of finitely many futures, taking an expectation over them
requires no new operation: by \cref{cor:poscomb} a positively weighted sum of
cost-to-go functions is again a function of the same class, obtained by one
ordered merge. The recourse function of two-stage stochastic programming is
therefore not \emph{approximated} here --- by cuts, by a state grid, or by
sampling the linking state --- but \emph{assembled}. This section states that
consequence, its exactness proof and its cost at one cut
(\crefrange{subsec:two-stage-problem}{subsec:two-stage-relation}), and then
shows that the same argument survives the repetition: on a scenario tree,
expectation and elimination alternate all the way down the horizon and
exactness propagates through the alternation (\cref{subsec:trees}).

\subsection{Two-stage recourse with a scalar boundary state}
\label{subsec:two-stage-problem}

Stage~1 covers periods $t=1,\dots,T_1$ under known data. Stage~2 covers
periods $t=T_1+1,\dots,T_1+T_2$ under one of $K$ scenarios $w$ --- prices,
inflows, availabilities, bounds --- occurring with probabilities $p_w>0$,
$\sum_wp_w=1$. Both stages are longitudinal in the sense
of~\eqref{eq:longitudinal}: the same scalar cumulative state carries through,
confined to $[0,E]$ (a time-varying envelope changes nothing below). Write
\begin{equation}
  \sigma=s_0+\sum_{t\le T_1}x_t
  \label{eq:boundary-state}
\end{equation}
for the \emph{boundary state} reached at the end of stage~1, and let
\begin{equation}
  V_2^w(\sigma)=\min\Bigl\{\sum_{t>T_1}c_t^w(x_t):\
  x_{T_1+1:T_1+T_2}\text{ feasible under scenario }w\text{ from }
  s_{T_1}=\sigma\Bigr\}
  \label{eq:stage2-value}
\end{equation}
be the optimal stage-2 cost of scenario $w$ started from $\sigma$. The
two-stage program with recourse is
\begin{equation}
  \min_{x_{1:T_1}}\ \sum_{t\le T_1}c_t(x_t)
  \;+\;\E_w\bigl[V_2^w(\sigma)\bigr],
  \label{eq:two-stage}
\end{equation}
the minimisation running over stage-1 decisions whose running state stays
inside the envelope. Non-anticipativity is structural: the stage-1 decisions
carry no index $w$, and the scenarios communicate with them only
through~\eqref{eq:boundary-state}.

\subsection{Assembling the expected cost-to-go}
\label{subsec:two-stage-algorithm}

Let $F=V_{T_1}$ be the forward value function of stage~1 produced by
recursion~\eqref{eq:longitudinal-recursion}, and let $B^w$ be the cost-to-go
of stage~2 under scenario $w$, produced by the mirrored backward recursion on
that scenario's data --- the same restrictions and infimal convolutions after
reflecting the state (the general forward--backward factorisation is
developed in \cref{subsec:forward-backward}). Write
$\Qexp=\sum_wp_wB^w$ for the expected cost-to-go.
\Cref{alg:two-stage} is then the whole method.

\begin{algorithm}[t]
\caption{Exact two-stage recourse on a longitudinal chain}
\label{alg:two-stage}
\begin{algorithmic}[1]
\Require stage-1 data $(c_t)_{t\le T_1}$, scenario stage-2 data
$(c_t^w)_{t>T_1}$ and probabilities $(p_w)_{w\le K}$, envelopes, $s_0$
\State $F\gets$ forward value function of stage~1 at the cut $T_1$
       \Comment{one forward pass (\cref{alg:longitudinal})}
\For{$w=1,\dots,K$}\Comment{embarrassingly parallel}
  \State $B^w\gets$ cost-to-go of stage~2 under scenario $w$, as a function
         of $\sigma$ \Comment{one backward pass}
\EndFor
\State $\Qexp\gets\sum_{w}p_wB^w$
       \Comment{one $K$-way ordered merge (\cref{cor:poscomb}); no cuts, no
       grid}
\State $\sigma^\star\gets$ deterministic selection in
       $\argminop_{\sigma\in[0,E]}\{F(\sigma)+\Qexp(\sigma)\}$
\State recover the stage-1 schedule pinned at $\sigma^\star$, and each
       scenario's stage-2 schedule out of $\sigma^\star$
       (\cref{lem:recovery})
\State \Return $\sigma^\star$, the schedules, the curves
       $\{B^w\}_{w\le K}$ and $\Qexp$
\end{algorithmic}
\end{algorithm}

\begin{proposition}[Exactness of the assembled recourse]
\label{prop:two-stage-exact}
The optimal value of~\eqref{eq:two-stage} equals
\[
\min_{\sigma\in[0,E]}\ \bigl\{F(\sigma)+\Qexp(\sigma)\bigr\},
\]
the minimiser set is a nonempty closed interval, and the schedules recovered
by \cref{alg:two-stage} are optimal for~\eqref{eq:two-stage}.
\end{proposition}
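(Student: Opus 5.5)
The plan is to split the minimisation in~\eqref{eq:two-stage} at the boundary state and recognise each half as a function already built by the recursion. The first step rewrites the two-stage objective as a minimisation over pairs $(x_{1:T_1},\sigma)$ linked by the constraint~\eqref{eq:boundary-state}. Minimising first over stage-1 schedules that reach a fixed $\sigma$ gives exactly $V_{T_1}(\sigma)=F(\sigma)$, by the definition of the forward value function in \cref{subsec:forward}. The expectation term depends on $x_{1:T_1}$ only through $\sigma$, so it passes through this inner infimum unchanged. The optimal value therefore equals $\inf_\sigma\{F(\sigma)+\sum_wp_wV_2^w(\sigma)\}$.

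The second step identifies $V_2^w$ with the backward cost-to-go $B^w$ computed on scenario $w$'s data; this is~\eqref{eq:cost-to-go} read at the cut $T_1$. By \cref{cor:poscomb}, $\Qexp$ is then closed proper convex PWL, provided the domains $\operatorname{dom}B^w$ share a point. If they do not, $\Qexp\equiv+\infty$ and the problem is infeasible, so I would assume feasibility throughout. Restricting $\sigma$ to $[0,E]$ costs nothing, because $\operatorname{dom}F\subseteq[0,E]$ by the final restriction $\mathcal R_{T_1}$.

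The third step concerns attainment and the shape of the minimiser set. The function $F+\Qexp$ is a sum of closed proper convex PWL functions on the line, and it is proper under feasibility. Its domain is a compact interval contained in $[0,E]$, so the minimum is attained. The set of minimisers is a sublevel set of a closed convex function, hence a nonempty closed interval; this is the same argument as in \cref{prop:forward-backward}.

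The last step checks the recovered schedules. Fix any $\sigma^\star$ in the minimiser set. \Cref{lem:recovery} applied to the stored $V_t$ of stage~1 and started at $s_{T_1}^\star=\sigma^\star$ returns a feasible stage-1 schedule of cost $F(\sigma^\star)$. The mirrored recovery on each scenario, run on the reflected state $\widehat B$, returns feasible stage-2 schedules of cost $B^w(\sigma^\star)$. The resulting policy is non-anticipative and has expected cost $F(\sigma^\star)+\Qexp(\sigma^\star)$, which equals the optimum.

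I expect the main obstacle to be the mirrored recovery on the backward chain. It needs a version of \cref{lem:recovery} for the reflected recursion with a pinned initial state rather than a free terminal state. I would handle this by re-indexing stage~2 as a forward longitudinal chain from the initial state $\sigma^\star$ and applying \cref{alg:longitudinal} to that chain directly. Its optimal value is $B^w(\sigma^\star)$ by definition, so optimality follows without a separate lemma. A secondary care point is the case where $\operatorname{dom}B^w$ differs across scenarios, which is the common-point hypothesis of \cref{cor:poscomb}. An optimal $\sigma^\star$ lies in $\operatorname{dom}\Qexp=\bigcap_w\operatorname{dom}B^w$, so every scenario's recovery is feasible from it.
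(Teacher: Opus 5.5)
Your proof is correct and follows essentially the paper's argument: condition on the boundary state to obtain $F(\sigma)+\sum_w p_w B^w(\sigma)$, use \cref{cor:poscomb} (with properness from feasibility) and \cref{prop:closure} for the sum, minimise over $[0,E]$ to get a nonempty closed interval of minimisers, and concatenate a recovered stage-1 prefix with recovered stage-2 suffixes. Your one deviation, re-solving each scenario's stage~2 forward from $\sigma^\star$, is valid but unnecessary. The telescoping in the proof of \cref{lem:recovery} works from any point of the domain, not only from an argmin, and it applies verbatim to the reflected backward chain started at $-\sigma^\star$; the paper's appeal to \cref{lem:recovery} for both schedules implicitly relies on exactly this.
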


The argument is the one the whole paper turns on, and it is worth stating in
one sentence before the bookkeeping: the boundary state is the \emph{only}
variable linking the two stages, so conditioning on it makes the objective
separate into $F(\sigma)+\sum_wp_wB^w(\sigma)$, and the expectation of
finitely many convex PWL functions is again one of them --- obtained by a
single ordered merge (\cref{cor:poscomb}), with no approximation anywhere.
The proof, which checks attainment and properness at each step, is in
\cref{app:proofs}.

The result is exact in the strict sense used throughout the paper: no state
grid, no sampling of $\sigma$, no outer convergence loop, and no tolerance in
the definition of $\Qexp$. The only approximation left in the model is the
one the modeller chose --- the finite scenario set itself.

\paragraph{Cost.}
\Cref{alg:two-stage} performs $K+1$ value-function passes --- one forward
over $T_1$ periods and one backward over $T_2$ periods per scenario, each
priced by \cref{prop:opcost} with the horizon totals of
\cref{rem:tseng-luo} --- plus one $K$-way ordered merge whose output carries
at most $\sum_wm_w+O(1)$ records, where $m_w$ is the record count of $B^w$
(\cref{cor:poscomb}), plus one pointwise sum and one minimisation. The $K$
scenario passes share no data and are embarrassingly parallel. The
comparison point is the deterministic equivalent of~\eqref{eq:two-stage},
whose constraint matrix carries one copy of the stage-2 block per scenario
and therefore grows by a factor $K$; here the growth is confined to the merge
that assembles the recourse function.

\begin{remark}[Per-scenario water values]
\label{rem:scenario-water-values}
The algorithm returns the curves $B^w$, not only their weighted sum. The
one-sided derivatives $(B^w)'(\sigma^\star)$ are the marginal water values of
the individual scenarios at the optimal boundary state, in currency per unit
of stored energy; the derivative of $\Qexp$ at $\sigma^\star$ is their
$p$-weighted average. Their dispersion across $w$ measures how much the
scenarios disagree about the worth of a stored unit at the cut --- a
value-of-information signal available at no extra cost, and one that
cut-based solvers do not return, since a cut aggregates the scenarios before
the marginal is read.
\end{remark}

\subsection{Relation to cut-based stochastic programming}
\label{subsec:two-stage-relation}

\Cref{alg:two-stage} is the L-shaped
method~\cite{VanSlykeWets1969,BirgeLouveaux2011} --- and, on a scalar state,
the exact limit of stochastic dual dynamic
programming~\cite{PereiraPinto1991,Shapiro2011,Dowson2021} --- with the
recourse function computed exactly, one pass per scenario, instead of
approximated from below by iterated subgradient cuts: the master problem
disappears, because the exact expected recourse function of a scalar linking
variable is cheaper to build than a sequence of its supporting hyperplanes.
Non-anticipativity likewise needs none of the consensus iteration of
progressive hedging~\cite{RockafellarWets1991}, being enforced exactly by the
single linking state $\sigma$.

Off the scalar state the positioning splits in two, and only the second half
fails. \emph{The composition stays exact one dimension up.} With two co-located
inventories the bivariate object of \cref{subsec:two-store-conjugate} plays the
role of $B^w$: each scenario's stage-2 cost-to-go is still an exact convex
piecewise linear function of the boundary state, of size bounded by the
arrangement rather than by the horizon, and $\Qexp=\sum_wp_wB^w$ is still
\emph{assembled} --- no cut, no state grid, no sampling of $\sigma$, no outer
loop and no tolerance. Nothing in the exactness argument of
\cref{prop:two-stage-exact} used the dimension of $\sigma$. It can be gated
against the deterministic equivalent exactly as in the scalar case; that
campaign is not yet among the generated tables.

\emph{The cost law does not.} Linearity in $K$ is a \emph{scalar state} result:
in one dimension the breakpoint set of a sum is the \emph{union} of the
breakpoint sets (\cref{cor:poscomb}), so the merge is linear in $\sum_wm_w$,
whereas in two dimensions the same pointwise sum is affine on the
\emph{overlay} of the scenarios' subdivisions, an \emph{arrangement} and not a
union, and the growth becomes quadratic in $K$. The mechanism is a duality one,
which is why the failure is structural and not an artefact of the data: a
breakline of the \emph{conjugate} is a merit-order slope times a storage gain,
both properties of the park, so the catalogue is shared across scenarios and no
scenario ever contributes a new slope or a new direction, while its Legendre
dual is a \emph{primal} edge supported by a level, that is by an integral of
the trajectory over the block, which is a property of the \emph{weather}: every
scenario contributes a new position, and the pairwise intersection of the
scenarios' primal line sets is measured empty.

Cut-based methods therefore regain their advantage in the \emph{cost law} and
not in exactness: they pay for dimension in cuts rather than in representation
size. In absolute terms the assembled object stays small at the $K$ a scenario
tree actually carries, so the construction remains usable there; what does not
transport is the claim of linear growth. The same caveat applies when block
constraints break the longitudinal structure of a stage, so that no exact
one-pass cost-to-go exists to be averaged.

\subsection{Exact multistage operation on scenario trees}
\label{subsec:trees}

The construction above merges once, at one cut, and the merged object is
consumed by a single minimisation. Multistage operation is not that:
information arrives repeatedly, decisions taken after each arrival must be
feasible for every continuation, and the expectation operator therefore has to
alternate with elimination all the way down the horizon. The question is
whether that alternation preserves exactness --- whether the object merged at
a node, itself the output of an elimination that already used a merge, still
is the true conditional expected cost-to-go. It does, for the scalar
longitudinal class, and the resulting sweep visits every node once.

\paragraph{Stage elimination on a tree.}
Data now live on a rooted tree: node $\nu$ carries a block of hours $H_\nu$
with its own scenario data and children $C(\nu)$ with conditional
probabilities $p_c$, $\sum_{c\in C(\nu)}p_c=1$. The state of charge flows
through the tree: the state reached at the last hour of $\nu$ is the state
entered by every child of $\nu$. Let $B_\nu(\sigma)$ be the optimal expected
cost of the subtree rooted at $\nu$, started from state $\sigma$ at the first
hour of $\nu$. Backward stage elimination from the leaves computes every
$B_\nu$ exactly: at each node, eliminate the node's hours by restricted
convolutions against the terminal function
$\Qexp_\nu=\sum_{c\in C(\nu)}p_c\,B_c$.

\begin{algorithm}[t]
\caption{Tree sweep: exact value functions at every node}
\label{alg:tree-sweep}
\begin{algorithmic}[1]
\Procedure{TreeSweep}{$\nu$}
  \If{$C(\nu)=\emptyset$}
    \State $\Qexp_\nu \gets$ terminal valuation (zero by default)
  \Else
    \For{$c\in C(\nu)$} \Call{TreeSweep}{$c$} \EndFor
    \State $\Qexp_\nu \gets \sum_{c\in C(\nu)} p_c\,B_c$
    \Comment{one ordered merge (\cref{cor:poscomb})}
  \EndIf
  \State $B_\nu \gets$ backward pass over $H_\nu$ with terminal function
  $\Qexp_\nu$ \Comment{\cref{subsec:forward-backward}}
\EndProcedure
\State \Call{TreeSweep}{root}; the root minimisation pinned at the initial
state gives the first-stage decision, and recovery descends the tree
(\cref{lem:recovery}).
\end{algorithmic}
\end{algorithm}

\begin{figure}[t]
\centering
\begin{tikzpicture}[
    >={Stealth[length=2mm]},
    tnode/.style={draw, rounded corners=1.5pt, inner sep=3pt, font=\small},
    prob/.style={font=\scriptsize, fill=white, inner sep=1pt},
    lbl/.style={font=\small},
    sub/.style={font=\footnotesize, gray!90}]
  \node[tnode] (r)  at (0,0)      {$\nu_0$};
  \node[tnode] (a)  at (2.6,1.2)  {$\nu_1$};
  \node[tnode] (b)  at (2.6,-1.2) {$\nu_2$};
  \node[tnode] (aa) at (5.2,1.8)  {$\nu_3$};
  \node[tnode] (ab) at (5.2,0.6)  {$\nu_4$};
  \node[tnode] (ba) at (5.2,-0.6) {$\nu_5$};
  \node[tnode] (bb) at (5.2,-1.8) {$\nu_6$};
  \draw[->] (r) -- (a) node[prob, midway] {$p_{\nu_1}$};
  \draw[->] (r) -- (b) node[prob, midway] {$p_{\nu_2}$};
  \draw[->] (a) -- (aa) node[prob, midway] {$p_{\nu_3}$};
  \draw[->] (a) -- (ab) node[prob, midway] {$p_{\nu_4}$};
  \draw[->] (b) -- (ba) node[prob, midway] {$p_{\nu_5}$};
  \draw[->] (b) -- (bb) node[prob, midway] {$p_{\nu_6}$};
  \draw[->, gray] (5.4,2.55) -- (0.2,2.55)
    node[midway, above, sub] {one backward sweep, leaves $\to$ root};
  \node[lbl] at (2.6,-2.6)
    {$\Qexp_\nu=\sum_{c\in C(\nu)} p_c\,B_c$ \ (merge), then $B_\nu\gets$
     backward pass over the hours $H_\nu$};
\end{tikzpicture}
\caption{One backward sweep of the tree: at each node, the children's
value functions merge into $\Qexp_\nu$ (weighted pointwise sum), then the
node's hours are eliminated backward. Every node ends up carrying its
exact expected cost-to-go --- what SDDP approximates by sampled cuts.}
\label{fig:tree}
\end{figure}

\begin{proposition}[Exactness on the tree]
\label{prop:tree-exact}
Let the tree be finite, let every node block be longitudinal in the sense of
\cref{subsec:class} with closed proper convex PWL step costs, and let the
state be confined to $[0,E]$ throughout. Then for every node $\nu$ the
function $B_\nu$ returned by \cref{alg:tree-sweep} is closed proper convex PWL
and equals the optimal expected cost of the subtree rooted at $\nu$, for
\emph{every} entry state $\sigma\in[0,E]$ simultaneously. In particular the
optimal value of the extensive form started from $s_0$ is
$B_{\mathrm{root}}(s_0)$, and the schedules recovered by descending the tree
from the root are feasible, non-anticipative and optimal.
\end{proposition}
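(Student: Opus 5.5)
The plan is to argue by structural induction on the tree, from the leaves up. The hypothesis at node $\nu$ has two parts. First, $B_\nu$ is closed proper convex PWL, and its domain contains every entry state from which the subtree is feasible. Second, $B_\nu(\sigma)$ equals the optimal expected cost of the extensive form of the subtree rooted at $\nu$, for every $\sigma\in[0,E]$, with $+\infty$ encoding infeasibility. Once this holds at the root, evaluating it at $s_0$ gives the value statement, and recovery gives the schedules.

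The leaf case is a single chain. For a leaf, $\Qexp_\nu$ is the terminal valuation, which by default is zero restricted to $[0,E]$. The subtree problem is then exactly the suffix problem behind \eqref{eq:cost-to-go} on the block $H_\nu$. The backward pass is the reflected restriction--convolution recursion of \cref{subsec:forward-backward}, so \cref{prop:closure} gives closedness and convexity, and exact variable elimination (the backward reading of \eqref{eq:longitudinal-recursion}) gives equality with the subtree optimum.

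The inductive step has three parts.
\begin{enumerate}
\item \emph{Merging the children.} Assume the hypothesis for every child $c\in C(\nu)$. By \cref{cor:poscomb}, $\Qexp_\nu=\sum_c p_cB_c$ is closed proper convex PWL, provided the children's domains have a common point. If they do not, $\Qexp_\nu\equiv+\infty$; this correctly encodes infeasibility of the subtree from every state, and the proposition must then be read with $B_\nu\equiv+\infty$.
\item \emph{Decomposing the extensive form.} In the subtree at $\nu$, non-anticipativity means that the decisions on $H_\nu$ carry no child index. Every child subtree interacts with the node block only through the last state $s_{\mathrm{end}}$ of $H_\nu$, and the subtrees of distinct children share no variables. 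Fixing the node-block decisions $x_{H_\nu}$, the remaining problem therefore separates into independent child problems. Its optimum is $\sum_c p_c\cdot(\text{optimum of child }c\text{ from }s_{\mathrm{end}})=\Qexp_\nu(s_{\mathrm{end}})$, by the induction hypothesis and because an infimum over a product of independent feasible sets splits as a sum. This is the interchange used in \cref{prop:two-stage-exact}.
\item \emph{Eliminating the node's hours.} The subtree optimum is now the longitudinal problem on $H_\nu$ with terminal cost $\Qexp_\nu$. The backward pass of \cref{alg:tree-sweep} computes exactly this for every entry state, again by exact elimination and \cref{prop:closure}.
\end{enumerate}

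For the recovery part I would proceed by descending induction from the root. The root minimisation pinned at $s_0$ is attained, because the function is closed PWL on a compact interval. Applying \cref{lem:recovery} within $H_{\mathrm{root}}$, with the terminal function folded into the last step, yields node-block decisions that attain $B_{\mathrm{root}}(s_0)$, and these decisions end at a state $\sigma'$ with $\sigma'\in\operatorname{dom}\Qexp_{\mathrm{root}}$. Hence $\sigma'\in\operatorname{dom}B_c$ for every child $c$, and we recurse into each child from $\sigma'$. The resulting schedule is non-anticipative by construction, since each node's decisions depend only on the path to that node. It is feasible because every selected state lies in the relevant domains. It is optimal because, at every node, the values attained add up along the step-2 decomposition.

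The main obstacle is the infinite-value bookkeeping in the second step of the inductive argument: interchanging the infimum and the expectation when some child is infeasible from some states, and making sure attainment holds so that recovery never lands outside a child's domain. I would handle this by working with extended-valued functions throughout. Attainment of each child infimum (from \cref{prop:closure}) makes the split an equality of minima rather than infima. The $+\infty$ conventions then make infeasible states contribute $+\infty$ on both sides, so the equality holds on all of $[0,E]$.
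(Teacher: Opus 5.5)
Your proposal is correct and follows the paper's proof essentially step for step: induction on the height of the tree, the children merged into $\Qexp_\nu$ by \cref{cor:poscomb}, separation of the subtree problem through the scalar exit state, one backward pass over $H_\nu$, and recovery by descending the tree with \cref{lem:recovery} inside each block. The paper phrases that last step as a forward pass pinned at the entry state followed by an exit selection in $\argminop_u\{F^{\nu}(u)+\Qexp_\nu(u)\}$, which is what folding the terminal function into the last step amounts to. Your explicit $+\infty$ bookkeeping, including the caveat that properness of $B_\nu$ needs the children's domains to meet, just spells out the feasibility assumption behind the paper's parenthetical ``any feasible subtree plan supplies an exit state at which every child's continuation is finite''.
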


The proof --- an induction on the height of the tree, value part and recovery
--- is in \cref{app:proofs}.

\begin{remark}[Cost in tree size: an accounting, measured, not a theorem]
\label{rem:tree-cost}
Weighted sums concatenate breakpoint lists (output at most additive in the
inputs, \cref{cor:poscomb}) and each hour's restricted convolution adds $O(1)$
records (\cref{prop:opcost}), which suggests a running time of
$\widetilde O\bigl(\sum_\nu |H_\nu|\bigr)$ --- linear, up to logarithmic
factors, in the total number of tree-hours, the same growth as the
deterministic-equivalent LP's \emph{data} with the FLT constant per hour. We
state this as an accounting and not as a proposition: a tight worst-case
bound would need breakpoint accounting across the merges, which we do not
carry out, and the measured support (\cref{subsec:tree-bench}) is flat wall
time over trees whose absolute times sit below the resolution at which an
empirical growth exponent is meaningful.
\end{remark}

\begin{remark}[Per-node water values]
\label{rem:node-water-values}
The sweep does not return only the root decision: it leaves an explicit
$B_\nu$ at every node. Its one-sided derivatives are the marginal water values
conditional on the history that reaches $\nu$, in currency per unit of stored
energy, and the derivative of $\Qexp_\nu$ is their $p$-weighted average; their
dispersion across the children of $\nu$ measures how much the branches
disagree about the worth of a stored unit at that cut. These are exactly the
curves that \cref{cor:rolling-exact} consumes as terminal valuations, so the
tree sweep is also the machine that produces the policy a rolling operation
would run.
\end{remark}

\begin{remark}[Reuse]
\label{rem:tree-reuse}
The sweep leaves an exact $B_\nu$ at every node. Changing stage-1 data ---
costs, bounds, initial state --- reuses every downstream $B_\nu$ unchanged,
the tree analogue of the parametric reuse measured in
\cref{subsec:parametric-reuse}. The same holds for rolling re-solves as the
tree is reshaped near the root.
\end{remark}

\begin{remark}[Offsets and anchors]
\label{rem:offsets}
Slopes are exact as returned, and every minimisation performed by
\cref{alg:tree-sweep} needs slopes only. When the optimal \emph{value} itself
is required --- for a comparison against a deterministic-equivalent objective,
as in \cref{subsec:tree-bench} --- values are pinned per node with one anchor
solve.
\end{remark}

\paragraph{Positioning: what SDDP samples, the sweep assembles.}
Stochastic dual dynamic programming~\cite{PereiraPinto1991,Shapiro2011,Dowson2021}
replaces the cost-to-go of each stage by an outer polyhedral approximation,
built by alternating sampled forward passes with backward collection of
subgradient cuts and refined until a statistical upper bound and the
deterministic lower bound agree within a tolerance; its efficiency rests on
sharing cuts among nodes of the same stage, which is legitimate under stagewise
independence of the uncertainty or, in the policy-graph formulation, across the
states of a Markov lattice~\cite{Dowson2021}. On a scalar cumulative state none
of that machinery is needed to obtain the cost-to-go, because the cost-to-go
\emph{is} a convex piecewise-linear function of one variable and one backward
pass returns it in full: the polyhedral outer approximation becomes the
function itself, the sampling loop a single deterministic sweep, the cut pool
per stage one explicit value function per node; the statistical stopping rule
disappears, leaving no tolerance in the algorithm; and no stagewise
independence is required, because nothing is shared between nodes --- each node
pays for its own block and merges its own children (\cref{prop:tree-exact}).

The price is why the two methods do not compete on the same instances. The
sweep visits every node of the tree and stores its function, so its cost grows
with the tree itself --- for a fixed branching factor, exponentially in the
number of stages --- whereas an SDDP iteration costs the sampled paths rather
than the tree, which is what makes very deep horizons tractable there. The
sweep is exact on the tree it is given; building that tree, by scenario
generation or reduction, is the approximation the modeller keeps, and it is the
only one left in this subsection. Its error is quantified and controlled in its
own literature~\cite{HoylandWallace2001,Pflug2001,HeitschRomisch2003}, and the
exactness claimed here is conditional on the tree those methods deliver.

Two neighbouring filiations mark the boundary: value functions computed
offline and then used to coordinate subsystems, the mechanism of approximate
spatiotemporal decomposition for microgrid operation~\cite{Zhu2019STD}, from
which the difference is not the architecture but that the coordinating object
is computed exactly and in one pass rather than learned; and the mixed
spatial--temporal decompositions of Carpentier, Chancelier, De Lara and
Pacaud~\cite{CarpentierChancelierDeLaraPacaud2020}, which attack the same
product of a network and a horizon by decomposing along both axes and
exchanging price or resource signals between the pieces, tolerate vector states
and genuine multistage information structures --- which this construction does
not --- and pay for that generality with an approximation that
\cref{prop:tree-exact} does not need on a scalar state.

No head-to-head timing against an SDDP implementation~\cite{Dowson2021} is
claimed: on the finite trees of \cref{subsec:tree-bench} such a race would
oppose one exact sweep to a sampling loop with a statistical stopping rule,
and the two do not answer to the same tolerance semantics. The difference this
paper relies on is structural --- exactness on the given tree, and the per-node
curves of \cref{rem:node-water-values} as output where the
deterministic-equivalent LP returns a single primal--dual pair and SDDP a cut
pool --- and \cref{subsec:tree-bench} measures it as such.

\subsection{Where the free part stops: risk aversion, and the scope of the
whole section}
\label{subsec:risk-scope}

\begin{remark}[CVaR stays in the class, but leaves the single merge]
\label{rem:cvar}
Expectation is risk-neutral, and adequacy studies often want tail weighting.
This is where the free part of the construction stops, and the boundary is
sharp. Unlike the expectation, $\cvar{\alpha}$ is not a positive combination
of the $B^w$: in the Rockafellar--Uryasev variational
form~\cite{RockafellarUryasev2000} it is a \emph{maximum} over a set of such
combinations. It therefore stays in the class --- a pointwise maximum of
finitely many convex PWL functions is convex PWL, so
$\sigma\mapsto\cvar{\alpha}(B^w(\sigma);p)$ is again convex PWL in $\sigma$,
affine on the common breakpoint refinement --- but it does not come out of one
ordered merge: the identity of the $\alpha$-tail changes wherever scenario
curves cross, which is $O(K^2)$ candidate crossings per interval pair, and the
crossing structure must be maintained interval by interval. Implementable, and
heavier than the merge by exactly that factor. The algorithmic core of this
paper therefore stays with $\Qexp=\E_w[B^w]$.
The variational form makes the construction explicit, but we do not carry it
out: $\cvar{\alpha}$ is not implemented in \pkg{} at the version measured
here, and no risk-averse claim is made in this paper.
\end{remark}

\Cref{rem:cvar} is the risk-axis twin of the dimension-axis paragraph above:
in both, exactness survives and the \emph{cost law} does not. Stating the two
together fixes the scope of the section. What this paper does \emph{not}
develop, and what it does not claim: a rolling two-stage model-predictive
policy, for which no measurement exists here; and valued multi-node blocks,
where a PWL terminal valuation is attached per unit inside a coordinated
network solve. The second is not a gap in the exposition but an open
construction: the coordination layer of \cref{sec:coordination} carries
schedules, not per-unit valuations, and nothing here says how the two compose.

\section{Where the hourly curves come from: exact network injection costs}
\label{sec:injection}

The longitudinal solver of \cref{sec:longitudinal} looks along time. Its
spatial counterpart fixes one hour of a network dispatch and asks what a
scalar injection costs there; the answer is again a convex piecewise-linear
curve, emitted as the ordered breakpoints and slopes that the engine of
\cref{sec:flt} consumes directly. This section builds that spatial oracle
(\cref{subsec:injection-trace}), states the coupled dispatch problem it serves
(\cref{subsec:coupled-model}), and composes the two oracles
(\cref{sec:coordination}). The curves it traces are also the data on which the
catalogue law of \cref{subsec:catalogue-law} is measured: a year of hourly
network dispatch is the rich-catalogue end of that axis.

The composition is carried here to one intertemporal unit per coupled group
(\cref{prop:one-storage}), which is where exactness stops; the coordination
layer a system with several such units requires --- its lock, the certified
joint block that removes it, and the annual system-scale measurements --- is
recorded in \cref{app:coordination}.

\subsection{The scalar injection value and its parametric trace}
\label{subsec:injection-trace}

Let $z$ collect the hour's generation, flows, curtailment and load
shedding, and let a scalar $y$ modify the balance along direction $h$:
\begin{equation}
\phi(y)=\min_z\{q^\top z:Az=d+yh,\ \ell\le z\le u\}.
\label{eq:spatial-value}
\end{equation}
The parameter can be an injection at one electricity node or a sparse
direction touching several commodity balances. The value $\phi$ is convex
and piecewise linear; each affine piece is an optimality interval and its
slope is $-h^\top\lambda$, where $\lambda$ is an optimal balance
multiplier~\cite{Gal1995,AdlerMonteiro1992}. This scalar CPWL function is
the interface between the hourly network and the longitudinal solver.

A grid of independent LP solves can miss narrow critical regions and pays a
general-solver setup cost at every sample. \pkg{} instead walks adjacent
optimal bases with a bounded-variable parametric dual simplex
(\cref{alg:trace}). The idea is classical: related network-flow algorithms
parameterise shipped flow and augment along shortest
paths~\cite{AhujaBatraGupta1984}, while more recent methods compute complete
parametric minimum-cost-flow paths for strictly convex piecewise-quadratic
costs~\cite{KlimmWarode2022}. Our contribution is the implementation for
thousands of closely related bounded hourly LPs: arbitrary sparse
directions, basis transport across hours and coordination sweeps, and
predictive basis atlases, all realised as batched C++ kernels
(\cref{sec:package,app:simplex}). The trace emits the exact ordered
breakpoints and slopes that the Legendre engine consumes directly.
Hour-chunk parallelism (\cref{subsec:curve-engineering}) is an optional
accelerator layered on top of these serial kernels: it changes wall time
only, never the curve computed, so every exactness statement below and every
single-storage timing holds single-threaded.

\begin{algorithm}[t]
\caption{Parametric trace of the injection value $\phi$ on $[y^-,y^+]$}
\label{alg:trace}
\begin{algorithmic}[1]
\Require hourly LP data $(A,d,q,\ell,u)$, direction $h$, start basis $B$
(atlas entry, previous hour, or cold start)
\State reoptimise $B$ at $y\gets y^-$ with the bounded-variable dual simplex
\While{$y<y^+$}
  \State record the slope $-h^\top\lambda(B)$ from the basis multipliers
  \State ratio test along $h$: largest $\Delta$ keeping $B$ primal feasible
         on $[y,y+\Delta]$
  \If{$y+\Delta\ge y^+$}
    \State record the final piece; \textbf{break}
  \EndIf
  \State $y\gets y+\Delta$; record the breakpoint; rank-one basis update
         (dual pivot)
\EndWhile
\State \Return ordered breakpoints and slopes of $\phi$, consumed directly
by \cref{alg:longitudinal}
\end{algorithmic}
\end{algorithm}

For the transport matrices used below, total unimodularity makes the
rank-one basis update especially simple and provides a structural check on
the trace~\cite{Schrijver1986}. No polynomial claim is made for arbitrary
LPs: the number of critical regions can be exponential~\cite{Murty1980}.
The method is output-sensitive and effective here because the annual 8-node
curves contain only $2.35$ affine pieces per hour on average.

\subsection{The coupled dispatch model}
\label{subsec:coupled-model}

The isolated longitudinal problem is exact and fast because one scalar state
separates past from future. A realistic energy system adds a different
coupling: at each hour, generation and transport link the injections of many
nodes, while storage links successive hours. We now embed the FLT solver as a
temporal oracle, consuming the spatial curves of
\cref{subsec:injection-trace}.

Let $z_t$ collect the spatial variables of hour $t$ and let $x_{r,t}$ be the
reservoir-side increment of storage $r$. The co-optimised dispatch is
\begin{equation}
\begin{aligned}
\min_{z,x}\quad &\sum_{t=1}^{T}q_t^\top z_t,\\
\text{s.t.}\quad&A_tz_t=d_t+\sum_{r\in\mathcal S}h_rL_r(x_{r,t}),
\qquad \ell_t\le z_t\le u_t,\\
&s_{r,t}=s_{r,t-1}+a_{r,t}+x_{r,t},\\
&0\le s_{r,t}\le E_r,\qquad
\underline p_r\le x_{r,t}\le\overline p_r.
\end{aligned}
\label{eq:coupled-dispatch}
\end{equation}
Here $a_{r,t}$ is an inflow and $L_r$ maps reservoir-side control to grid
load. For a device with round-trip efficiency $\eta_r$,
\begin{equation}
L_r(x)=\begin{cases}x,&x\ge0,\\ \eta_rx,&x<0.\end{cases}
\end{equation}
The spatial model couples nodes but not hours; each storage model couples
hours but has a scalar state. A frontal LP exposes both structures to one
simplex or barrier solve. Our alternative alternates exact projections onto
one structure at a time and exchanges convex value curves.

\subsection{Composing the two oracles}
\label{sec:coordination}
\label{subsec:alternation}

The two oracles compose in one direction immediately. Suppose the system
contains one storage $r$. Trace the exact hourly value~\eqref{eq:spatial-value}
in the direction $h_r$ and compose it with the efficiency map,
$\psi_t(x)=\phi_t(L_r(x))$; with free curtailment $\phi_t$ is non-decreasing,
so $\psi_t$ is convex PWL, and the dispatch projected onto $x$ is
$\min_x\sum_t\psi_t(x_t)$ under the storage bounds --- an instance
of~\eqref{eq:longitudinal}.

\begin{proposition}[Exactness for one storage]
\label{prop:one-storage}
If the hourly curves are traced over the full feasible injection interval,
one parametric spatial pass followed by one FLT solve returns a global
optimum of~\eqref{eq:coupled-dispatch} with one storage. Both eliminations
are exact projections: $z_t$ appears in no other hour's constraints, so
eliminating it hour by hour is exact and leaves $\sum_t\psi_t(x_t)$, which
\cref{alg:longitudinal} then solves exactly (\cref{lem:recovery}).
\end{proposition}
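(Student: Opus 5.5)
The proof is a two-stage exact projection: partial minimisation over $z$ hour by hour, then the longitudinal solve in $x$. Write the objective of \eqref{eq:coupled-dispatch} with one storage $r$ as a joint minimisation over $(z,x)$. For fixed feasible $x$, the constraints on $z$ split into $T$ independent blocks, since $z_t$ appears only in hour $t$'s balance and bounds. Hence
\[
\min_{z,x}\sum_t q_t^\top z_t=\min_x\sum_t\min_{z_t}\{q_t^\top z_t:A_tz_t=d_t+h_rL_r(x_t),\ \ell_t\le z_t\le u_t\},
\]
which is $\min_x\sum_t\phi_t(L_r(x_t))=\min_x\sum_t\psi_t(x_t)$ over the storage constraints. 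Equality of the nested and joint infima is elementary. Attainment of each inner minimum at every $x_t$ in the traced interval follows because the hourly LP is feasible and bounded there: $\phi_t$ is finite on that interval, and a finite LP value is attained.

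Next I verify that the reduced problem is an instance of \eqref{eq:longitudinal} with closed proper convex PWL step costs. Take $\tilde c_t=\psi_t+\delta_{[\underline p_r,\overline p_r]}$, with envelopes $[0,E_r]$ shifted by the cumulative inflows $\sum_{\tau\le t}a_{r,\tau}$. The point needing care is convexity of $\psi_t=\phi_t\circ L_r$, because $L_r$ is concave and kinked at $0$ for $\eta_r\le1$. Composition rules do not apply directly, so I argue on slopes. On $x\ge0$, $\psi_t'=\phi_t'(x)$. On $x<0$, $\psi_t'=\eta_r\phi_t'(\eta_r x)$. Across $0$, the left slope $\eta_r\phi_t'(0^-)$ must not exceed the right slope $\phi_t'(0^+)$. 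Since $\phi_t$ is non-decreasing under free curtailment, $\phi_t'(0^-)\ge0$, so $\eta_r\phi_t'(0^-)\le\phi_t'(0^-)\le\phi_t'(0^+)$. Monotonicity within each half follows from convexity of $\phi_t$. Piecewise linearity is immediate, and the breakpoints on the negative side are rescaled by $1/\eta_r$.

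The main obstacle is the hypothesis that the trace covers the \emph{full} feasible injection interval. If $\phi_t$ were known only on a subinterval, the reduced problem would be a restriction and could miss the optimum. I would define the traced interval as $\operatorname{dom}\phi_t$ intersected with $h_rL_r([\underline p_r,\overline p_r])$ and note that outside it $\phi_t=+\infty$. The reduced problem therefore loses no feasible point, and its feasibility set in $x$ coincides with the projection of the feasible set of \eqref{eq:coupled-dispatch}.

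Finally, \cref{alg:longitudinal} returns an exact optimal $x^\star$ by \cref{prop:closure} and \cref{lem:recovery}. For each $t$, choose $z_t^\star$ as the optimal basis solution at the parameter $L_r(x_t^\star)$, which \cref{alg:trace} stores for the optimality interval containing that parameter. The pair $(z^\star,x^\star)$ is feasible, and its cost equals $\sum_t\psi_t(x^\star_t)$, the minimum of the projected problem. By the projection identity above, this is the global optimum of \eqref{eq:coupled-dispatch}.
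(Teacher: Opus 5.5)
Your proof is correct and follows the paper's own argument: eliminate $z_t$ hour by hour, get convexity of $\psi_t=\phi_t\circ L_r$ from the monotonicity of $\phi_t$ under free curtailment, then apply \cref{alg:longitudinal} with \cref{lem:recovery}; your extra bookkeeping (attainment, the trace domain, inflow-shifted envelopes, recovery of $z^\star$) fills in details the paper leaves implicit. One correction: for $\eta_r\le1$ the map $L_r(x)=\max\{x,\eta_r x\}$ is convex, not concave, so the standard composition rule (convex non-decreasing after convex) applies directly, which is how the paper argues it; your direct slope check is nonetheless valid.
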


This is the cleanest demonstration of composability: a generic network LP is
eliminated hour by hour into PWL costs, and the temporal chain is then
eliminated by Legendre transforms. \Cref{subsec:one-node-bench} instantiates
it at annual scale.

Beyond one intertemporal unit per group the composition leaves that regime. The
curve seen by each unit becomes conditional on the schedules of the others, so
a sweep is block-coordinate descent on a convex but \emph{nonsmooth} reduced
objective, whose coordinatewise optima need not be globally
optimal~\cite{Tseng2001}; co-located high-power devices lock on a shared
merit-order kink, because moving either alone need not cross a kink that a
joint move would cross. A certified joint block removes that lock exactly on
eligible groups, but it buys correctness rather than speed. The construction
and the system-scale measurements are recorded in \cref{app:coordination}.
This paper's own answer for the smallest such block is different in kind:
\cref{sec:pair} \emph{eliminates} the pair instead of coordinating it.

\begin{remark}[Solver-agnostic and function-specific layers]
\label{rem:oracle-agnostic}
\Cref{prop:one-storage} depends on an exact longitudinal solver, not on
Legendre transforms specifically: once the spatial model is projected into the
separable hourly costs $\psi_t$, the temporal block is an RAP--NC instance
that the MDA algorithm~\cite{Vidal2019} could also solve
(\cref{subsec:related}). The interchangeability extends to every layer that
consumes only an optimal \emph{schedule}, and stops at the layers that consume
a \emph{function} --- the forward--backward capacity sensitivities of
\cref{sec:investment} and the expected cost-to-go of \cref{sec:stochastic}.
\end{remark}

\subsection{Building and consuming exact curves efficiently}
\label{subsec:curve-engineering}

Three implementation details determine whether the decomposition is useful.
First, the parametric simplex traces critical regions rather than repeatedly
calling an LP solver on sampled injections. Second, bases are carried across
hours and traces can be chunked over independent hour ranges. Third, exact
spatial curves may contain many nearly collinear breakpoints that have no
economic effect but inflate subsequent temporal value functions. An
$\varepsilon$-simplification removes a breakpoint only when the induced
vertical error is below a stated tolerance; feasibility is never simplified.
The approximation status and its error budget are therefore separate from
the exact curve trace.

The mechanics of basis reuse --- a shared hourly atlas, storage-specific
anchor bases, a predictor certified or repaired by dual simplex --- and a
pass-by-pass profile of where native simplex time goes are engineering
rather than algorithm design; \cref{app:simplex} records them, together with
the measured effect of incremental primal maintenance. Their headline
consequence is that curve tracing, not the FLT solve, dominates coordination
time as the network grows (\cref{app:coordination}).

\part{The transform in dimension two and beyond}

\section{Carrying a pair of states: the bivariate block}
\label{sec:pair}

Everything so far has carried one cumulative state. The engine of
\cref{sec:flt} conjugates functions of one variable, the solver of
\cref{sec:longitudinal} eliminates a scalar separator exactly, and the three
payoffs that followed --- capacity sensitivities, assembled recourse, and the
composition with a network's hourly curves --- are all readings of a
one-dimensional value function. This part asks what survives when the carried
state becomes a \emph{pair}.

The instance is two storages at one node, coupled only through the hourly
cost of their aggregate injection. The coupling can be \emph{eliminated}, by
carrying the value function of the pair rather than of each store --- the
same variable elimination as \cref{sec:longitudinal}, one dimension up,
tractable because conjugation keeps it combinatorially finite. The competing
treatment --- keeping two one-dimensional recursions and paying a
coordination layer --- is the subject of \cref{app:coordination}, and
\cref{rem:two-store-scope} below bounds where each reading applies.

Two things make this more than an extension exercise, and they are the
results of this part. The object the recursion carries is
\emph{combinatorially rigid}: its breakline directions are fixed in advance
by a theorem, in the plane and, under a common charge--discharge
coefficient, in any dimension. And what that rigidity costs is measurable
and adverse --- the step pays a face count quadratic in the slope catalogue
it reads, which is why the same curve is fast read as a chain and slow read
as a block. Throughout, $\psi_t$ is a convex piecewise-linear hourly cost of
net injection at the node; the traced network curves of \cref{sec:injection}
are its system-scale source, and \cref{subsec:catalogue-law} measures both
readings on the same data.

\subsection{The carried pair and its hourly step}
\label{subsec:two-store-conjugate}

Write $W_t(\ell)$ for the cost-to-go of the pair at levels
$\ell=(\ell_1,\ell_2)$ and $W^\star_t$ for its conjugate, a convex piecewise
linear function of a pair of \emph{water values} $s=(s_1,s_2)$ whose gradients are
levels. The hourly step is exactly
\begin{equation}
W^\star_{t+1} \;=\; \bigl(W^\star_t \text{ clamped on each axis into } [0,E_i]\bigr)
\;+\; G^\star_t ,
\label{eq:bivariate-step}
\end{equation}
because the level box is separable, so $(W+\iota_{\mathrm{box}})^\star
= W^\star\,\square\,\sigma_{\mathrm{box}}$ factorises into two directional
gradient clamps; and because $(W\,\square\,G_t)^\star = W^\star + G^\star_t$, so the
hourly gadget is an \emph{addition}. No planar transform is taken inside the loop.

\begin{remark}[The fixed arrangement, and what it rests on]
\label{hyp:stationarity}
Every breakline that \eqref{eq:bivariate-step} creates belongs to three
fixed pencils: the verticals $\{s_1 = p\,c_1\}$ and horizontals
$\{s_2 = p\,c_2\}$ for $p$ a slope of some $\psi_t$ and $c_i$ a charge or
discharge coefficient, and at most four rays through the origin
$\{s_2 = (c_2/c_1)\,s_1\}$ --- the loci where splitting one extra \si{\mega\watt\hour}
between the two stores is indifferent. \emph{Once the position alphabet is
complete} --- which the implementation decides by a membership guard rather
than assumes --- the carried subdivision is permanently a sub-complex of one
\emph{fixed} line arrangement, and the hourly step is a labelling of its
faces: two directional clamps and one addition per face, with nothing cut,
clipped or rebuilt. The two halves of that picture have different status,
and the paper keeps them apart throughout: the \emph{directions} of the
three pencils are a theorem (\cref{thm:directions}), while the
\emph{positions} of the verticals and horizontals remain a per-instance
measured quantity --- and \cref{prop:two-closed} shows that this asymmetry
is structural rather than a matter of proving harder.
\end{remark}

\subsection{The primal reading, and the direction theorem}
\label{subsec:pair-theorem}

The recursion is carried on the conjugate side, where the objects are
subdivisions of the price plane. Their breakline alphabet, however, is
legible on the \emph{primal} side, and that is what makes it provable. Write
$Z$ for the primal function whose conjugate the recursion carries --- the
cost-to-arrive of the pair in the forward sweep, the cost-to-go in the
backward one --- so that one hour of \eqref{eq:bivariate-step} reads
\begin{equation}
Z'\;=\;\bigl(Z+\iota_{[0,E_1]\times[0,E_2]}\bigr)\;\infconv\;\check G_t ,
\label{eq:pair-primal}
\end{equation}
$\check G_t$ being the hourly gadget, reflected in the sweep that traverses
time backwards. All primal objects along the recursion have compact
non-empty domains, so their conjugates are finite on the whole plane, and
two facts follow that do all the work: a piece of $\conj Z$ active on a set
of positive area is a \emph{vertex} of the primal subdivision, and a
realized breakline --- a positive-length frontier between two maximal affine
regions --- is a primal \emph{edge}, its supporting line being
$\{\langle u-v,\,s\rangle = Z(u)-Z(v)\}$ for the edge's two endpoints
(\cref{app:stat-bridge,app:stat-read}).

A question about the arithmetic of gradients has thereby become a question
about a polyhedral complex. Let
\[
R^{*}\;=\;\bigl\{\,(c_2^{\sigma_2},\,-c_1^{\sigma_1})\;:\;
\sigma_1,\sigma_2\in\{-,+\}\,\bigr\},
\]
at most four unoriented directions, none axis-parallel when the charge and
discharge coefficients are positive, and say that a primal object satisfies
the invariant $\mathbf J$ when \emph{(J1)} every edge of its subdivision is
parallel to $e_1$, to $e_2$, or to a member of $R^{*}$, and \emph{(J2)} the
object is \emph{constant} along every edge whose direction lies in $R^{*}$.
Under $\mathbf J$ the primal reading translates at once: an axis-parallel
edge gives a vertical or a horizontal breakline, while an edge of direction
$d\in R^{*}$ has equal values at its endpoints by (J2), so its breakline is
$\langle u-v,s\rangle=0$ --- a line through the origin \emph{exactly}, of
slope $c_2^{\sigma_2}/c_1^{\sigma_1}$. The constancy clause is therefore not
an accounting device: it is precisely what makes the rays pass through the
origin rather than merely share a direction.

\begin{theorem}[Directional stationarity of the two-store block]
\label{thm:directions}
Assume the hourly curves $\psi_t$ finite, convex, piecewise linear and
non-decreasing (free curtailment, the property already invoked before
\cref{prop:one-storage}), the charge and discharge coefficients
$0<c_i^-\le c_i^+$ fixed across the instance, and the power and level boxes
non-empty and compact. Then for every hour and for every object carried by
\eqref{eq:bivariate-step} --- after each one-sided gradient clamp and after
each gadget addition, in the forward as in the backward sweep, free or
pinned terminal --- every realized breakline of the carried subdivision is
\emph{(a)} vertical, \emph{(b)} horizontal, or \emph{(c)} borne by a line
passing exactly through the origin with slope
$c_2^{\sigma_2}/c_1^{\sigma_1}$, of which there are at most four. No
genericity is assumed, no viewport bound enters, and the statement holds for
any losses and any horizon.
\end{theorem}

The proof is an induction, and its three steps are the three operators of
the recursion; the lemmas are in \cref{app:stat}. The initial objects
satisfy $\mathbf J$: a point, or the level box, whose edges are axis-parallel
and whose (J2) is vacuous. The hourly gadget satisfies it because inside each
quadrant it is $\psi_t$ composed with a linear form, so its affinity regions
are cut by that form's level lines --- an edge of direction exactly
$(c_2^{\sigma_2},-c_1^{\sigma_1})$, along which the gadget is constant, equal
to $\psi_t$ at the kink. A level restriction refines the complex and the only
edges it creates lie on the cut line, which is axis-parallel. The
inf-convolution adds epigraphs, hence adds faces: an edge of the sum is the
sum of the faces its exposing price exposes in each operand, so its direction
is an edge direction of one of them, which is (J1); and exposure at a price
$s_0$ makes the directional slope of each object along $d$ equal
$\langle s_0,d\rangle$, so the operand that contributed the direction being
constant along it forces $\langle s_0,d\rangle=0$ and the sum is constant
along it too, which is (J2). Induction over the hours gives $\mathbf J$ for
every carried object, and the primal reading turns it into (a)--(c).

That last step is what closes the worry \cref{prop:two-closed}(iii) names
below. A clamp band is a \emph{new primal vertex} with coordinates absent
from its inputs, and it may well become co-active later against pieces born
after it --- but the breakline they then realize is an edge of the primal
subdivision, whose directions and constancies are governed by $\mathbf J$,
not by the arithmetic of the gradients that produced the vertex.

\begin{proposition}[Two closed channels, one open]
\label{prop:two-closed}
\emph{(i) The addition is closed.} Let $f,g$ be convex piecewise linear with
finitely many pieces, and write $R_f$, $R_g$ for maximal affine regions of $f$ and
of $g$. The maximal affine regions of $f+g$ are exactly the non-empty
intersections $R_f\cap R_g$ --- the common refinement of the two subdivisions ---
so $\partial(R_f\cap R_g)\subseteq\partial R_f\cup\partial R_g$, and every
breakline of the sum is carried by a breakline of $f$ or of $g$. No general-position
assumption is required, which matters because degeneracy is exactly where the
structural guards fire. \emph{(ii) The linear map is closed.} The gadget's
direction catalogue is fixed, and a per-step rescaling of a state axis is diagonal,
under which the three pencils are invariant: verticals stay vertical, rays through
the origin stay rays through the origin, and only positions move. \emph{(iii) The
clamp is open, and it is the only open channel.} Its closed form gives the
surviving band a gradient component equal to a cross determinant of two existing
gradients divided by a difference of their components --- a rational absent from
the input gradients, which a later difference against another piece can turn into a
breakline direction outside the pencils.
\end{proposition}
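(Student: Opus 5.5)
The three parts are independent, and I would prove them in order, each by a direct computation on the conjugate side in the plane of water values $s=(s_1,s_2)$.

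\emph{Part (i), the addition.} Write $f=\max_i(a_i^\top s+\alpha_i)$ and $g=\max_j(b_j^\top s+\beta_j)$, with maximal affine regions $R_f^i$ and $R_g^j$. On $R_f^i\cap R_g^j$ the sum $f+g$ coincides with $(a_i+b_j)^\top s+\alpha_i+\beta_j$. Conversely, take a region of $f+g$ with nonempty interior. Its gradient is $a_i+b_j$ for the pieces active there, because $\partial(f+g)=\partial f+\partial g$ is exact for finite convex functions and a singleton sum forces both summands to be singletons. Hence a maximal region of $f+g$ with nonempty interior is the union of the cells $R_f^i\cap R_g^j$ sharing that gradient. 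For two distinct cells $(i,j)\neq(i',j')$ that both have interior, $\partial f$ or $\partial g$ differs across the shared boundary, so the sum of subdifferentials is not a singleton there. I therefore expect a genuine breakline, provided a gradient coincidence $a_i+b_j=a_{i'}+b_{j'}$ does not glue the two cells; this coincidence is the degenerate case to handle. I would handle it by the statement actually used, the inclusion of breaklines: every point in the interior of a cell of the common refinement is a differentiability point of both $f$ and $g$. So a breakline of $f+g$, which consists of non-differentiability points of positive length, lies in $\partial R_f\cup\partial R_g$. This part needs no genericity, which is the point of the claim.

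\emph{Part (ii), the linear map.} Take $D=\operatorname{diag}(\lambda_1,\lambda_2)$ with $\lambda_i>0$. Pulling back along $D$ sends the line $\{s_1=p\}$ to $\{s_1=p/\lambda_1\}$ and $\{s_2=q\}$ to $\{s_2=q/\lambda_2\}$. A line $\{\langle u,s\rangle=0\}$ goes to $\{\langle Du,s\rangle=0\}$, which still passes through the origin. The gadget contributes only the fixed catalogue of \cref{hyp:stationarity}. So the three pencils are preserved as families, and only the positions and the ray slopes move, by known factors.

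\emph{Part (iii), the clamp.} I expect this to be the main obstacle. It is the substance of the proposition, and it is an existence claim, so a computation plus one witness should do. I would write the one-axis clamp $W^\star\infconv\sigma_{[0,E_1]}$ in the $s_1$ direction as the operation that replaces the gradient component $\partial_1W^\star$ by $\min\{\max\{\partial_1W^\star,0\},E_1\}$ along each horizontal fibre. Between two adjacent pieces with gradients $g=(g_1,g_2)$ and $h=(h_1,h_2)$ that straddle the clamp level, the surviving band is the affine function matching both pieces at the positions where their first components cross the bound. Solving the two matching conditions gives a band whose second gradient component is
\[
\frac{g_1h_2-g_2h_1}{g_1-h_1}
\]
at the unclamped level, with $E_1$ substituted at the upper level. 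This is the cross-determinant quotient named in the statement. It is generally absent from $\{g_2,h_2\}$ and from the catalogue.

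To show the channel is genuinely open, I would exhibit an explicit small instance, with two pieces and one later gadget piece. In it, the difference between the new band's gradient and a later piece's gradient gives a breakline normal that is not axis-parallel and not one of the four rays. A rational check suffices for this.

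Finally, I would remark why (iii) does not contradict \cref{thm:directions}. That theorem constrains realized breaklines through the primal invariant $\mathbf J$, not through gradient arithmetic. So part (iii) shows only that a conjugate-side induction cannot close, which is the asymmetry \cref{hyp:stationarity} announces.
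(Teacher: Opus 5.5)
Your three computations are the ones the paper uses: the common refinement for the addition; the diagonal pull-back for the rescaling, where your observation that the ray slopes also rescale by known factors is more accurate than the statement's ``only positions move''; and, for the clamp, the band gradient as the point of the segment $[g,h]$ whose clamped coordinate equals the bound. That gives $(g_1h_2-g_2h_1)/(g_1-h_1)$, exactly as the appendix reads the clamp band.

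In (i) you need not fall back to the breakline inclusion, because the gluing you set aside cannot occur. Your own subdifferential remark already shows why. A maximal affine region of $f+g$ is convex. At each of its interior points $f+g$ is differentiable, so $\partial f+\partial g$ is a singleton, and hence both $f$ and $g$ are differentiable there. Their gradients are then constant on that connected open set, so the region lies in a single cell $R_f\cap R_g$. A coincidence $a_i+b_j=a_{i'}+b_{j'}$ is therefore harmless, and the region identity holds outright, with ``non-empty'' read as ``with non-empty interior''.

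The genuine gap is the witness in (iii). You read the last clause as an existence claim and plan an instance in which the band and a later piece meet along a breakline whose normal is neither axis-parallel nor one of the four rays. Read that way, the clause contradicts \cref{thm:directions}. That theorem puts every realized breakline of every carried object on a vertical, a horizontal, or one of the four rays through the origin, so no such instance exists.

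Your closing remark attributes this to the primal invariant $\mathbf J$ and concludes that a conjugate-side induction cannot close. The conjugate side excludes the witness just as well.
\begin{itemize}
\item By (i), every breakline of a sum lies on a line that already carries a breakline of an operand.
\item A one-sided clamp in $s_1$ leaves every breakline on a line of its input, except for new seams that are all horizontal. The frontier runs along existing breaklines, where the band agrees with both adjacent pieces, and any two bands differ only in their second gradient coordinate.
\end{itemize}
So in no instance does the band's new rational become a new direction. A difference between the band's gradient and that of a piece it never meets along a segment is not a breakline, so exhibiting one proves nothing.

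The paper reads (iii) accordingly. The new rational is the second coordinate of a new primal vertex on the cut line. It is a new \emph{position}, which resurfaces in the positions of axis-parallel breaklines, and never a new direction. The out-of-pencil direction named in the clause is the worry that \cref{thm:directions} closes. Part (iii) is therefore established by your closed form together with that identification: drop the witness, and confine ``cannot close'' to the position alphabet.
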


There is therefore \emph{no} closure theorem for the position alphabet in
terms of the park data, and none should be sought there --- while the
directions are closed by \cref{thm:directions}, which is exactly the split
\cref{prop:two-closed}(iii) mechanises: the clamp's open channel is a channel
of positions, never of directions. What survives as empirical is narrow, and
it is what the cost argument actually needs. The gradient of $W^\star_t$ is
constant on a face, so a clamp frontier $\{\partial_a W^\star_t = E_a\}$ is a
union of \emph{existing} edges: no face is ever split, and the region beyond
is replaced in closed form by the linear extension of the kept piece. The
position alphabet is measured small: over a full year of TYNDP data we
measure \pairAlphabetVLow{} to \pairAlphabetVHigh{} distinct verticals and
\pairAlphabetHLow{} to \pairAlphabetHHigh{} horizontals, saturating well
before the end of the horizon --- the recurring economic levels of the hourly
curves, which the slope audit of \cref{subsec:catalogue-law} finds again as
the physical content of the traced catalogues once tracer dispersion is set
aside.

\begin{remark}[What the measured alphabet contained, and a correction to it]
\label{rem:alphabet-artefact}
An earlier census of this alphabet reported a family of ray directions outside
the predicted set and read it as a structural gap. It was an \emph{artefact of
compilation}: the kernel is header-only, hence a single translation unit, and
the compiler contracts \texttt{a*b+c} into a fused multiply--add by default on
the target platform, so adding a diagnostic block changed the arithmetic of the
recursion itself. The duplicate-gradient reduction that keeps the subdivision
canonical tests \emph{exact} equality --- correct as soon as the arithmetic is
reproducible --- so under contraction two copies of one piece stop comparing
equal, survive as two pieces, and the sliver between them is censused as a
breakline. The residual excess that survived freezing the floating-point
semantics is refuted as a feature by the exact-rational audit corroborating
\cref{thm:directions}: over \statEdgeAuditTotal{} million realized edges,
classified in rational arithmetic across adversarial instance families, none
falls outside (a)--(c), and every through-origin ray carries an exactly zero
offset. The lesson is kept because it generalises: a geometric census is an
instrument reading, and this one was wrong in every floating-point build we
measured, by a margin that moves with a single compilation flag.
\end{remark}

\subsection{Beyond two stores: the $d$-dimensional block}
\label{subsec:pair-ddim}

Neither the primal reading nor the invariant is two-dimensional in essence.
When all charge and discharge coefficients equal one common constant --- the
regime that lossless devices realise --- the same three preservation steps
run in $\R^d$, and the monotonicity hypothesis is \emph{lifted}: with linear
load maps the composition is convex whatever the hourly curve does, so
negative prices without free curtailment cost the argument nothing.

What replaces $R^{*}$ is a lattice. Call a linear subspace \emph{admissible}
when its orthogonal complement has a basis of $0/1$ vectors with pairwise
disjoint supports; writing those supports as \emph{zeros} (singletons) and
\emph{blocks} (size $\ge2$), the remaining coordinates being \emph{free},
the decomposition is canonical. Blocks are the coalitions of storages whose
levels the recursion has tied. The invariant $\mathbf J_d$ then reads:
\emph{(J1)} the orientation of every face of the primal subdivision is
admissible, and \emph{(J2)} on every face the gradient is constant within
each block --- water values are equal inside a coalition, free coordinates
being exempt. The one-dimensional admissible subspaces are exactly the axes
$\langle e_i\rangle$ and the roots $\langle e_i-e_j\rangle$, the type-$A$
root system, and that classification is the theorem.

\begin{theorem}[Realized walls of the $d$-store block, common coefficients]
\label{thm:walls-d}
Assume all charge and discharge coefficients equal one common constant, the
hourly curves finite convex piecewise linear (no monotonicity required), and
the power and level boxes non-empty and compact. Then for every hour and
every object carried by the $d$-dimensional recursion --- after each of the
$2d$ one-sided clamps and after each gadget addition, in both sweeps, free
or pinned terminal --- every realized wall of the carried subdivision has
normal \emph{(a)} an axis $e_i$, the wall being a position hyperplane
$\{s_i=\text{const}\}$ with the constant in the per-instance alphabet, or
\emph{(b)} a root $e_i-e_j$, the wall being \emph{exactly} the indifference
hyperplane $\{s_i=s_j\}$ through the origin. No genericity is assumed, and
the statement holds in any window.
\end{theorem}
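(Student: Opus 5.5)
The plan is to replay the proof of \cref{thm:directions} in $\R^d$: establish the invariant $\mathbf J_d$ for every primal object by induction over the operators of the recursion, then translate it into a statement on realized walls through the primal reading of \cref{subsec:pair-theorem}. That reading is not two-dimensional. All primal objects have compact non-empty domains, so the conjugate is finite everywhere. A realized wall, meaning a codimension-one frontier between two maximal affine regions of $\conj Z$, is dual to a primal edge $[u,v]$. Its supporting hyperplane is $\{\langle u-v,s\rangle=Z(u)-Z(v)\}$.

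The first step is a classification lemma for the edges themselves. A one-dimensional admissible subspace is the orthogonal complement of a family of disjoint $0/1$ vectors spanning a $(d-1)$-dimensional space. Counting supports, there are two shapes. Either all $d-1$ vectors are singletons with one coordinate free, which gives an axis $\langle e_i\rangle$. Or there are $d-2$ singletons and one block of size two and no free coordinate, which gives a root $\langle e_i-e_j\rangle$. Any other partition leaves either too many free coordinates or too few vectors. Under (J1) every primal edge is therefore parallel to $e_i$ or to $e_i-e_j$.

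Next, constancy along root edges. If the edge has direction $e_i-e_j$, then $i,j$ form a block of the face, so (J2) gives $\partial_iZ=\partial_jZ$ on it. The derivative of $Z$ along the edge is then zero, hence $Z(u)=Z(v)$. The wall is $\langle e_i-e_j,s\rangle=0$, which is exactly $\{s_i=s_j\}$ through the origin. For axis edges $u-v\parallel e_i$ and the wall is $\{s_i=\mathrm{const}\}$. The constant is whatever the data produce, which is the per-instance alphabet.

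The induction has three parts.

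\emph{Base.} A point or a level box has only axis-parallel faces. All their orientations are coordinate subspaces, which are admissible with zeros only, and (J2) is vacuous.

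\emph{Gadget.} With common coefficient $c$ on both sides, the gadget at hour $t$ is $x\mapsto\psi_t(c\sum_ix_i)$ on the power box. This is convex without monotonicity, because the load map is linear. Its affinity regions are cut by the axis-parallel box facets and by the level sets $\{\sum_ix_i=\text{const}\}$. A face of this complex is the intersection of such a level set with a coordinate face. Its normal space is spanned by the zero singletons of the pinned coordinates and by $\mathbf 1$ restricted to the remaining ones, so the remaining coordinates form one block. Along the face the gradient is $c\,\psi_t'\mathbf 1$ plus box normals, which is constant across the block, so (J2) holds.

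\emph{Restriction by the level box.} This refines the complex by axis hyperplanes. A face intersected with $\{x_i=\text{const}\}$ adds $e_i$ to its normal space. I must check that this keeps admissibility. If $i$ is free it becomes a zero. If $i$ lies in a block $b$, replace $\mathbf 1_b$ by $e_i$ and $\mathbf 1_{b\setminus i}$. The gradient is constant on the shrunken block, and a block that shrinks to a singleton turns into a zero. So $\mathbf J_d$ is preserved.

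The main obstacle is the inf-convolution. Epigraph addition makes each face of $Z\infconv G$ a Minkowski sum $F_Z(s_0)+F_G(s_0)$ of the faces exposed by a common price $s_0$. Its direction space is the sum $L_1+L_2$ of the two direction spaces, and I need $(L_1+L_2)^\perp=L_1^\perp\cap L_2^\perp$ to be admissible.

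Orthogonal complements of admissible subspaces correspond to partial partitions: the zeros and blocks, with free coordinates left out. On an exposed face the exposing price makes the gradient constant within every block. Exposure forces $s_0\perp L_k$, and since $L_k$ is the set of vectors summing to zero on each block and vanishing on zeros, $s_0$ is constant on each block of both operands.

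I expect the intersection to be the subspace of the coarsest common structure. Its blocks are connected components of the union of the two block relations, a zero anywhere in a component forces the whole component to zero, and free coordinates are those free in either operand. I would first prove by linear algebra that $L_1^\perp\cap L_2^\perp$ has exactly this $0/1$ disjoint basis, which is a join in the partition lattice. Then I would show that $s_0$, constant on both families of blocks, is constant on the merged components, which gives (J2) for the sum.

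The delicate point is the zero/free interplay. A coordinate that is a zero in $L_1^\perp$ but free in $L_2$ must drop out correctly. I would handle this by checking the basis claim directly on vectors orthogonal to both $L_1$ and $L_2$.

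Finally, the clamps are primal restrictions of the same level-box type, covered by the restriction step. Sweep direction and terminal pinning only reflect or change the base object, so no genericity enters anywhere.
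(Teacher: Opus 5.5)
Your route is the paper's own: the invariant $\mathbf J_d$, axes and roots as the only one-dimensional admissible subspaces, preservation through the gadget, the level restriction and the inf-convolution, then the primal reading. But the step that carries (J2) through the inf-convolution rests on a false claim, and that step is exactly what puts the root walls through the origin. Exposure by $(s_0,-1)$ does not force $s_0\perp L_1$. It only says that the slope of $f$ on its exposed face $F_f$, along any $\delta\in L_1=\mathrm{or}(F_f)$, equals $\langle s_0,\delta\rangle$. Orthogonality to all of $L_1$ would mean $f$ is constant on $F_f$, and it would make $s_0$ vanish on every free coordinate. That fails already on an axis edge along which the value changes. What you need is the induction hypothesis. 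For a pair $\{i,j\}$ in a block of $L_1$, (J2) of $f$ gives $0=\partial_{e_i-e_j}f|_{F_f}=\langle s_0,e_i-e_j\rangle$, and likewise for $g$ on $F_g$. Telescoping along a chain of operand-block pairs then gives $s_{0,i}=s_{0,j}$ for every pair in a block of $L_1+L_2$. The same exposure identity, applied to $h=f\infconv g$ on $F_h=F_f+F_g$, turns this into $\partial_{e_i-e_j}h|_{F_h}=0$. Exposure alone says nothing about blocks, which is why the induction has to carry (J2) at all.

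Second, your description of the join inverts the roles of zeros and free coordinates. A vector of $L_1^\perp\cap L_2^\perp$ is constant on each connected component of the union of the two block relations. It must vanish on every component containing a coordinate that is free in either operand. A zero of one operand leaves it unconstrained. So it is freeness that spreads through a component, and the zeros of $L_1+L_2$ are exactly the common zeros. Read literally, your rule lets coordinate $2$, a zero of $\langle e_1\rangle$, turn the block $\{1,2\}$ of $\langle e_1-e_2\rangle$ into zeros. It therefore returns $\{0\}$ for $\langle e_1-e_2\rangle+\langle e_1\rangle$, which is in fact $\R^2$ with every coordinate free. This does not damage (J2), which only needs each block of the sum to lie inside one component, but the basis you set out to verify must be the corrected one.

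Finally, the $d$-dimensional primal reading does not transfer verbatim from the plane. You must show that at $(d{-}1)$-almost every point of a realized wall the exposed primal face is one-dimensional. This holds because the prices exposing faces of dimension at least two form a set of dimension at most $d-2$. That face then contains $u$ and $v$ as vertices and is the edge $[u,v]$.
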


\begin{corollary}[Size: an arrangement of fixed directions]
\label{cor:arrangement}
The carried subdivision is at all times a sub-complex of the arrangement of
the $d$ position families $\{s_i\in A_i\}$ and of the $d(d-1)/2$
indifference hyperplanes $\{s_i=s_j\}$; with $N=\sum_i|A_i|+d(d-1)/2$
hyperplanes, at most $O(N^d)$ faces are carried at any time. The status of
the two parts differs, exactly as in two dimensions: the \emph{directions}
--- and the count $d(d-1)/2$ --- are a theorem; the position alphabets $A_i$
are a per-instance quantity, finite and decidable by the membership guard,
\emph{not} bounded by any theorem, so the complexity reading is conditional
on that guard. The corollary bounds the carried \emph{size}, not the work of
one step.
\end{corollary}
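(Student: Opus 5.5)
The plan is to read the corollary off \cref{thm:walls-d} by a purely combinatorial argument on hyperplane arrangements; no further analysis of the recursion is needed.

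\emph{The arrangement.} Fix an hour and one carried object, after a clamp or after a gadget addition. Let $A_i$ be the finite set of constants $c$ for which $\{s_i=c\}$ supports a realized wall at some time. Finiteness of $A_i$ is not a consequence of the theorem: it is exactly what the membership guard decides, and I will carry it as a hypothesis, as the statement does. Let $\mathcal H$ be the family of the $\sum_i|A_i|$ position hyperplanes together with the $d(d-1)/2$ hyperplanes $\{s_i=s_j\}$, so $|\mathcal H|=N$. By \cref{thm:walls-d}, every realized wall, that is every $(d-1)$-dimensional frontier between two maximal affine regions of the carried conjugate, lies in some member of $\mathcal H$. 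Hence the union of all walls is contained in $\bigcup\mathcal H$.

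\emph{Sub-complex.} Take any open full-dimensional cell $C$ of the arrangement $\mathcal H$. It is connected and meets no wall, so it lies in the interior of a single maximal affine region: two points of $C$ in different regions would be joined by a segment inside $C$ that must cross a frontier of positive $(d-1)$-measure. Consequently every maximal region is the closure of a union of arrangement cells. A lower-dimensional face of the carried subdivision is an intersection of closed maximal regions. It is therefore a union of closed faces of $\mathcal H$, because each face of $\mathcal H$ lies either entirely inside or entirely outside any closed union of cells. This gives the sub-complex statement.

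\emph{Counting.} Distinct maximal regions have disjoint interiors, and each contains at least one full-dimensional cell. So the number of carried regions is at most the number of cells of $\mathcal H$, which is at most $\sum_{k=0}^{d}\binom{N}{k}=O(N^d)$ by the classical Buck--Zaslavsky bound. The same injection, face by face using the sub-complex property, bounds carried faces of every dimension by the total face count of the arrangement, again $O(N^d)$ for fixed $d$.

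\emph{Status split and main obstacle.} The remaining clauses are bookkeeping. The directions and the count $d(d-1)/2$ come from part (b) of the theorem and are independent of the instance. The $A_i$ enter only through $N$, so the size bound is conditional on the guard. The bound controls the carried object, not the cost of a clamp or an addition. The step I expect to need the most care is the sub-complex claim for lower-dimensional faces. One must check that the realized frontier structure, a polyhedral subdivision of $\R^d$ whose conjugate is finite everywhere because the primal domains are compact, has no face whose relative interior straddles two arrangement faces. I would settle this by invoking the region-is-a-union-of-cells fact above and the fact that the faces of a polyhedral subdivision are intersections of its closed maximal cells.
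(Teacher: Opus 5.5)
Your proposal is correct and follows the paper's route: \cref{thm:walls-d} puts every realized wall on a hyperplane of the arrangement, so each carried region is a union of arrangement cells, and the face bound is the standard count for $N$ hyperplanes in $\R^d$. Your cell-by-cell argument and the injection of carried faces into arrangement faces spell out what the paper's proof states in one line. One small correction: finiteness of the $A_i$ need not be assumed, since a finite horizon carries finitely many objects, each polyhedral with finitely many walls; what no theorem supplies, and what makes the complexity reading conditional, is a bound on $|A_i|$ in terms of the instance data.
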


The induction is the one above, carried through a lattice rather than a
four-element direction set: admissibility is stable under intersection with
a coordinate hyperplane (a block containing that coordinate simply shrinks)
and under sums, and in a sum every pair inside a block is joined by a chain
of pairs each inside a block of one operand --- which is what transports
(J2) across the inf-convolution. The lemmas, and the reading that turns
$\mathbf J_d$ into (a)--(b), are in \cref{app:statd}. An exact rational
$d=3$ prototype of the recursion corroborates the theorem against a
duality-certified LP judge: zero discrepancy on \ddimProbeCount{} value
probes, and \ddimWallCount{} realized walls all on axes or exact-offset
roots.

At $d=2$ the theorem recovers the lossless case of \cref{thm:directions} ---
the single root $e_1-e_2$ is the diagonal ray $\{s_1=s_2\}$ --- while
lifting the monotonicity that \cref{thm:directions} needs to cover genuinely
kinked load maps; conversely, losses and kinks at $d=2$ are covered only by
\cref{thm:directions}, so neither statement subsumes the other. Once the
coefficients differ, no closure of this kind exists at all: the direction
algebra acquires a proven infinite multiplicative orbit from $d=3$ on, and
the two-store block is exactly the case whose direction theorem survives
losses (\cref{app:statd-seam}).

\subsection{What it costs, and what it returns}
\label{subsec:pair-cost}

The step is $O(F\sigma)$ with $F$ the \emph{face} count of the arrangement and
$\sigma$ the size of the hourly gadget --- not $O(C)$ in the live cell count $C$
--- the cells the recursion's own clipping actually touches ---
which the faces exceed by \pairFaceOverCellLow{} to \pairFaceOverCellHigh{}. The trajectory is recovered by the
same Fenchel decomposition as \cref{subsec:recovery}, one dimension up: for
$s^\star\in\partial W_t(\ell)$ the sum rule gives $\ell=a-u$ with
$a\in\partial \widetilde W^\star(s^\star)$ and $u\in\partial G^\star_t(-s^\star)$,
so every primal constraint holds by construction and no reachability envelope is
needed. Because the price is a vertex of a \emph{fixed} arrangement, the active
set at that vertex is a precomputed incidence table rather than a
tolerance-ranked neighbourhood.

\Cref{tab:two-store-block} hands one instance to four engines; its discussion
belongs to the computational study (\cref{subsec:two-store-bench}). Two
structural facts belong here. The per-hour step is $O(F)$ in the face count
of the fixed arrangement, so every comparison is governed by the slope
catalogue rather than by the pair --- \cref{subsec:catalogue-law} measures
the whole axis in both reader regimes. And the decisive difference is not the
factor: an LP block returns a cost and a schedule, while
\eqref{eq:bivariate-step} returns, at
$\twoStoreFamilyMbLossless$--$\twoStoreFamilyMbLossy$\,\si{\mega\byte}, the whole
family $V^\star_0,\dots,V^\star_{T-1}$ --- the object a stochastic outer layer
(\cref{sec:stochastic}) or a marginal-value readout needs, and one an LP block
cannot produce at any price.

Whether the catalogue the step pays for can be \emph{thinned} is a fair
question, and it has a certified answer: bracketing each hourly curve
between a dropped-piece minorant and a chordal majorant on the thin
catalogue makes the two sweeps bracket the true value functions pointwise,
and refinement terminates on the exact answer with a two-sided certificate
at every round (\cref{prop:bracket}). Measured, the scheme is correct and
finds almost nothing to compress --- the hourly curves are piecewise linear,
so their curvature sits at finitely many kinks and dropping one opens a
first-order gap --- which is why it is reported in \cref{app:proofs} as an
instrument and a certificate rather than a speed-up.

\begin{remark}[Scope, and why coordination is not superseded]
\label{rem:two-store-scope}
\Cref{thm:directions} is a statement about \emph{two} stores at \emph{one}
node in operation. With $S\ge3$ the pencils become pencils of planes, and the
fixed finite arrangement survives only in the common-coefficient regime of
\cref{thm:walls-d}; across a line the
stores no longer meet only through one node's kink. The construction of
\cref{subsec:masters} covers all of those and remains the general remedy. What
\eqref{eq:bivariate-step} shows is that the smallest coupled block --- the one
that motivates the coordination layer of \cref{sec:coordination} --- admits exact
\emph{elimination} rather than exact \emph{coordination}, and that it is
cheaper to eliminate than to coordinate while returning strictly more. The
two are measured against each other in \cref{app:coordination}, where the
joint block is constructed.
\end{remark}

\begin{remark}[How an arrangement size is reported in this paper]
\label{rem:arrangement-size}
Two quantities describe the size of the carried object, and only they are
quoted here: the \emph{face count} $F$ of the fixed arrangement, and the
\emph{deduplicated line alphabet}. Both are properties of the catalogue: the
arrangement is built once, combinatorially, from the pencils, so neither
passes through the polygon routines that clip cells against the viewport. A
census of the polygon edges of the clipped cells is available from the audit
tooling and is \emph{not} reported --- it counts every interior edge twice
and is an instrument reading rather than a property of the problem
(\cref{rem:alphabet-artefact}). Accordingly, \emph{every geometric count in
this paper is reported together with the floating-point semantics it was
measured under}, and a count quoted without them is not reproducible. The
same stamp applies to the live cell count, which the recursion's own clipping
does touch.
\end{remark}

\subsection{Marginals for the pair: differentiating the recursion}
\label{subsec:marginal-joint}

The investment layer of \cref{sec:investment} read each capacity
marginal of a one-dimensional device off a forward--backward pair
(\cref{thm:marginal-capacity}). The two-store block admits no such pair,
and its capacities are not all ceilings: a converter rating is a bound of
the \emph{control} box, and an electrolyser cap is an hourly one. We
therefore obtain its marginals by a different route --- forward-mode
differentiation of the conjugate recursion itself.

A capacity enters that recursion in exactly two places: an energy capacity is
the upper bound of the hourly gradient clamp, and a power rating is a bound of
the control box read by the hourly gadget. A design direction is thus described
by how it moves six bounds, and $\partial J/\partial\theta$ is the chain rule on
those arguments, carried on the affine pieces while the recursion runs and read
off the active piece at the end. One pass; no finite difference, and no dual
trajectory.

\begin{theorem}[Genericity implies exactness]
\label{thm:generic-exact}
Call $\theta_0$ generic for direction $e$ when no active set ties: no clamp
gradient lies exactly on its bound, no two control vertices tie in value, and
the conjugate is attained at a strictly unique point. Then every selection made
by the forward pass is strict and stable on $[\theta_0,\theta_0+\delta)$, and
the pass returns the exact one-sided derivative $\partial J/\partial\theta^+$,
which is an exposed subgradient of the convex $J$.
\end{theorem}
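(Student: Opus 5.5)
The plan is to read the forward pass as the evaluation of one composite map $\theta\mapsto J(\theta)$ and to show that, on a one-sided neighbourhood of $\theta_0$, this map coincides with a single affine function of $\theta$. The composite is built from three kinds of primitive: the $2$ one-sided gradient clamps per store, whose bounds are energy capacities, the gadget additions, whose control box carries the power ratings, and the final conjugate evaluation. At each primitive the pass makes a discrete selection: which side of a clamp bound a gradient falls on, which control vertex attains the gadget's support value, and which piece attains the terminal conjugate. I would encode the whole pass as a finite decision tree whose leaves are affine expressions in the six bounds, hence affine in $\theta$ along the direction $e$. The forward-mode pass is then exactly the directional derivative of the leaf selected at $\theta_0$.

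The first step is local, primitive by primitive. A clamp compares a gradient component with $E_i(\theta)$. If the comparison is strict at $\theta_0$, it stays strict for small $\theta-\theta_0\ge 0$ by continuity of all the affine data, and the clamped piece is affine in $\theta$ with the derivative that the chain rule propagates. The gadget's conjugate at a price is a maximum over control vertices of expressions affine in the power bounds. If no two vertices tie in value, the argmax is a singleton that persists on a neighbourhood, so that maximum is locally affine. The terminal readout is a maximum or minimum over finitely many affine pieces, and a strictly unique attaining point gives the same persistence. Since the recursion has finitely many hours, and hence finitely many selections, I would take $\delta$ as the minimum of the finitely many per-selection margins divided by the corresponding rates. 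This gives one $\delta>0$ on which every selection is strict and frozen, which is the stability claim.

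The second step is to identify the derivative. On $[\theta_0,\theta_0+\delta)$ the pass computes an affine function $A(\theta)$ that agrees with $J(\theta)$. This holds because, for every fixed $\theta$, the recursion \eqref{eq:bivariate-step} is exact: it is variable elimination with the clamp factorisation of the separable box and the sum rule for the conjugate of an inf-convolution. Hence $J'(\theta_0;e)=A'$, and the pass returns exactly this number. For the subgradient claim, convexity of $J$ follows from perturbation theory: capacities enter only as right-hand sides of a convex program, as in \cref{subsec:capacity-parameter}. Since $J$ is polyhedral, its directional derivative is the support function of $\partial J(\theta_0)$ in direction $e$ and is attained on an exposed face. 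So the returned number is the value of an exposed subgradient along $e$, and the gradient of the frozen affine leaf belongs to $\partial J(\theta_0)$ because affine agreement on a one-sided segment plus convexity pins it there.

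The main obstacle is the first step at the clamp. A clamp band is a new primal vertex whose gradient component is a rational function of existing gradients, by \cref{prop:two-closed}(iii). Its dependence on $\theta$ is therefore not obviously affine: the position of the band moves with $E_i$, and later pieces are compared against it. I would handle this by noting that, with selections frozen, every stored piece is obtained from the six bounds by finitely many additions, clamps to affine bounds and fixed-index selections. The pieces are therefore affine in $\theta$ along $e$, and the rational coefficients depend only on the frozen combinatorics, not on $\theta$. If that fails, because the band's gradient itself varies rationally in $\theta$, I would fall back on piecewise-rationality and continuity. Genericity would then still give a neighbourhood on which the rational expression is smooth, and polyhedrality of $J$ forces its restriction to be affine.
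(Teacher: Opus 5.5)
Your argument is correct and is essentially the one the paper intends. The theorem has no written proof, but the surrounding text describes the same mechanism: the chain rule on the six bounds, carried on the affine pieces and read off the active piece, made exact because every tie-free selection is strict and persists on $[\theta_0,\theta_0+\delta)$, while the recursion is exact at each fixed $\theta$.

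Two details to tighten. The obstacle in your last paragraph does not arise for the band gradients: a clamp frontier pair $[g_K,g_D]$ is a primal edge with one of the fixed directions of \cref{thm:directions}, so the band gradient is affine in $(g_K,E)$ with instance-constant coefficients; your rational fallback would suffice in any case. Separately, agreement on a one-sided segment fixes only the component along $e$ of the leaf's gradient, so to place the full gradient in $\partial J(\theta_0)$ you should instead use that the strict genericity inequalities persist on a full neighbourhood of $\theta_0$.
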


\begin{remark}[A superset is a support for the value, not for the derivative]
\label{rem:superset-derivative}
The gadget enumerates a finite \emph{superset} of the control vertices;
redundant points are harmless for the value because they can only be dominated.
They are \emph{not} harmless for the derivative: Danskin's rule requires the
argmax of the \emph{constrained} problem, and a retained point that ties in
value at $\theta_0$ contributes a rate although it maximises at no
$\theta_0+\varepsilon$. Restricting the tie-break to branches that remain
feasible under the perturbation is what makes the reading exact.
\end{remark}

Ties are non-generic, and each admissible kind is handled by the corresponding
one-sided selection --- at the gadget, at the clamp frontier, and at the readout
--- so exactness extends from the generic set to almost every capacity. The one
condition that is not repaired this way, and that the implementation detects, is
a non-unique conjugate maximiser. Parameters that move the \emph{arrangement}
rather than a bound --- round-trip efficiencies, the slope alphabet --- are
outside this route.

\part{Implementation, results and discussion}
\section{The FLToptim package}
\label{sec:package}

The algorithms of
\cref{sec:flt,sec:longitudinal,sec:investment,sec:injection}
are implemented in \pkg, an open-source Python package with a compiled C++ core.
This section records what a user installs, the two performance-critical
design decisions behind the measurements of \cref{sec:experiments}, and the
arithmetic contract and testing discipline that carry the exactness claims
of \cref{sec:flt,sec:longitudinal} into a floating-point implementation.

\paragraph{Architecture.}
Three layers mirror the structure of the paper.
\begin{enumerate}
\item \emph{Convex-function engine.} The C++ class templates
\texttt{cplfunction} and \texttt{cpqfunction} (with vector containers for
per-period collections) implement the canonical PWL and PWQ representations
of \cref{subsec:representations} as ordered maps from breakpoint position to
slope increment, exposing the five operations of \cref{subsec:operations}
to Python through a Cython wrapper. The templates are instantiated twice:
for double precision, and for exact rational arithmetic (lazy-exact GMP
rationals), which provides the certified evaluation layer of the arithmetic
contract.
\item \emph{Longitudinal solver.} \Cref{alg:longitudinal} runs on either the
historical ordered-map backend or a flat sorted-array backend, the default,
which stores each value function contiguously. Both produce bit-identical
schedules; on annual workloads the flat backend makes the
dynamic-programming stage a measured $4.5$--$6\times$ faster.
\item \emph{Parametric simplex and coordination drivers.} The
bounded-variable parametric dual simplex of \cref{alg:trace} runs as batched
C++ kernels; the \texttt{elec} and \texttt{mr\_decompose} drivers implement
the decompositions of \cref{sec:coordination} for electricity and coupled
multi-energy networks, with hour-chunk parallel curve tracing (Axis~1) and
Jacobi storage updates (Axis~2). By default the driver streams the traced
slope--breakpoint buffers directly into the flat DP arrays, without
materialising an intermediate curve object, and evaluates the exact
multi-resource objective from the committed trace, so that monitoring cannot
perturb the basis that seeds the next sweep. It also exposes a persistent
decomposition state and native batched nodal prices for nested
conditional-gradient algorithms, terminal-seeded backward value functions, and
per-step level pins on storages: a level pin fixes the cumulative state at
chosen periods and so models demand-side management as a shiftable-load band
conserved over each window (\cref{subsec:multienergy-bench}).
\end{enumerate}

\paragraph{Dependencies and interfaces.}
The runtime dependencies are \texttt{numpy} and \texttt{highspy}; HiGHS
supplies cold basis construction and independent spatial reference solves,
and Gurobi is used only as an optional frontal reference. The package
builds from source with any C++17 compiler on Python 3.9--3.13, and the
source distribution is the reference artefact: the build enforces
\texttt{-ffp-contract=off}, without which the geometric counts of
\cref{sec:pair} are not reproducible (\cref{rem:arrangement-size}). The
measurements in this paper were taken against a source build of the release
named in each caption rather than against a downloaded wheel.
Since version 0.10.0 the package also ships model-coupling adapters
(\texttt{FLToptim.interfaces}, lazily imported optional extras that leave
the \texttt{numpy}+\texttt{highspy} core unchanged): a mapping from
\texttt{pommes}~\cite{pommes} single-year operation datasets --- the path
behind the coupled case study of \cref{subsec:multienergy-bench} --- and a
bidirectional adapter for PyPSA/PyPSA-Eur electricity
networks~\cite{pypsa} (capacities pinned, network in transport relaxation,
dispatch and nodal prices written back), each validated against an exact
HiGHS monolith of the same case. The same module exposes a capacity-expansion
oracle, which consumes a planning dataset as data and returns the cost and the
capacity marginals so that a Benders master can drive it. Its behaviour at
system scale --- many capacities, many nodes --- is outside the scope of this
paper (\cref{subsec:sizing-scope}) and the package documents it separately.

\paragraph{Arithmetic and degeneracy contract.}
\label{subsec:contract}
An exact mathematical algorithm needs a precise numerical contract; the
word \emph{exact} is used at three distinct levels:
\begin{enumerate}
\item exact real-arithmetic identities and representation invariants, as
stated in \cref{prop:closure,prop:opcost};
\item certified evaluation for rational input data, provided by the
exact-rational instantiation of the same compiled templates;
\item floating-point evaluation with fixed comparison and merging rules,
used in all benchmarks of \cref{sec:experiments}.
\end{enumerate}
Equal slopes, coincident breakpoints and zero-width pieces are resolved by
the canonical merging of \cref{subsec:representations}, which leaves the
represented function unchanged; no perturbation or tie-breaking jitter is
invoked anywhere in the paper to justify exactness for an original
instance.

\paragraph{Why the stochastic layer is a package operation and not a recipe.}
The expectation of \cref{sec:stochastic} and the node merge of
\cref{alg:tree-sweep} are single calls on the weighted pointwise sum, and the
tree sweep itself is driver-level control flow over the terminal-seeded
backward pass listed above --- state handoff, terminal sum, minimisation per
node. That they are package operations rather than a documented recipe is a
correctness argument, not a convenience one. A first prototype assembled the
recourse from public accessors and did not honour the reflected-state
convention of \cref{subsec:forward-backward}: breakpoints were not negated on
the backward curves. The result was a boundary state $\sigma^\star$ wrong by
$8\cdot10^{-4}$ relative on one random seed and \emph{exact} on another ---
silent, seed-dependent, and invisible to any check that does not vary the
seed. Once reflection lives inside the operation, the failure is impossible by
construction rather than caught by testing.
The terminal-seeded backward pass the sweep composes ships with \pkg{}; the
per-node probability driver remains research code at \pkgVersion{},
exercised through the versioned campaign behind \cref{subsec:tree-bench}
rather than as a first-class package runner --- a packaging boundary, not a
correctness one, and it is stated as such.

\paragraph{Testing and degeneracies.}
The unit-test suite exercises the degenerate configurations of the contract
directly --- equal slopes, coincident breakpoints, zero-width pieces,
unbounded domains --- together with conjugation round-trips, high-precision
rational runs, parametric traces validated against independent LP solves,
and end-to-end decomposition regressions. Release gates rerun the installed
wheels on the benchmark instances (\cref{app:campaigns}).

\paragraph{Availability and reproducibility.}
\pkg{} is distributed under the MIT license; \pkgVersion{} is the current
release and the archived source distribution is its citable form. The
measurements reported here were taken at \pkgCampaignVersion{}, from which
\pkgVersion{} differs only in tests, documentation and packaging metadata ---
no numerical code changed between the two, which a reader can check in one
command against the two tags. The operation and reuse measurements of
\cref{sec:experiments} come from a single unified campaign against version
\pkgCampaignVersion{} (\cref{subsec:protocol,app:campaigns}), superseding the
earlier per-column archived versions. The coordination-refinement
measurements of \cref{subsec:ramps-bench} come from their own named release,
stated there.

The experiment driver, instance generators, seeds, archived campaigns and
table generators live in a separate benchmark repository. Its measurements are
a Snakemake workflow keyed by package version, and the tables of
\cref{sec:experiments} are \emph{generated} from the archived result files
rather than transcribed --- including the figures the prose quotes, which the
manuscript reads as macros, so that a number cannot drift between the campaign
that measured it and the sentence that cites it.
The release is archived at
DOI~\href{https://doi.org/10.5281/zenodo.21943668}{10.5281/zenodo.21943668}
--- one record carrying the package, the complete archive of the campaigns
behind every table, the exact-rational instruments corroborating
\cref{thm:directions,thm:walls-d}, and the bivariate-kernel harness --- and
the development repositories are public:
\url{https://git.persee.minesparis.psl.eu/energy-alternatives/FLToptim}
(tag \texttt{v\pkgVersion}),
\url{https://git.persee.minesparis.psl.eu/energy-alternatives/fltoptim_studies/benchmarks},
and
\url{https://git.persee.minesparis.psl.eu/energy-alternatives/fltoptim_studies/prototypes},
which carries the two exact-arithmetic instruments the appendices rely on ---
the rational audit of the two-store recursion and the $d=3$ prototype
(\texttt{stationarity\_exact/}), and the bivariate kernel harness whose
results \cref{tab:two-store-block} reads
(\texttt{bivariate\_kernel/}).

\section{Computational study}
\label{sec:experiments}

The study is organised by the state dimension the reader carries, mirroring
the two parts of the paper. A first group of six experiments stays
one-dimensional: the longitudinal kernel in isolation, parametric families
and reuse, two-stage recourse, multistage scenario trees, a single reservoir
on real data, and one storage inside an annual network dispatch. A second
group raises the state dimension to two with the coupled two-store block.
The catalogue-size law then reads the \emph{same} annual curves with both
readers, which is why it comes after the two groups rather than between
them; the section closes with the validation of the oracles and the observed
failure modes. The one system-scale case that
remains here is the exact one --- a single storage inside an annual network
dispatch (\cref{subsec:one-node-bench}); the system-scale coordination study,
where past one node coordination rather than the recursion sets the runtime,
is in \cref{app:coordination}. The synthetic experiment of
\cref{tab:intro-storage} isolates the kernel over horizons long enough to
expose its scaling; every other experiment uses one full year
($T=8{,}760$) of demand, renewable availability and network data derived
from ENTSO-E TYNDP~\cite{TYNDP2024}. This separation keeps the opening claim
simple while making the system-level comparison representative of energy
applications.

\subsection{Protocol}
\label{subsec:protocol}

The common protocol reports end-to-end wall time, peak resident memory,
objective quality against the frontal optimum, and physical feasibility of
the returned schedules. The frontal references use the natural sparse
state-of-charge formulations in HiGHS 1.14.0 and Gurobi 13.0.2; every FLT
call uses the flat-array value-function backend, and the ramp-free network
cases use the predictive shared hourly basis atlas, a monotonicity guard and
storage-local pruning continuation (\cref{sec:package,app:simplex}).
The system-scale tables --- the one-node row here, and those of
\cref{app:coordination} --- come from a
single unified campaign against the \pkg{} \pkgCampaignVersion{} release,
timed by a common driver that measures Gurobi, HiGHS and the decomposition
with identical accounting, end-to-end and solve-only both recorded; the
remaining tables read their own explicitly dated campaign files, named in
each caption (\cref{app:campaigns}); the machines and the
reference-objective transfer to the license-free server are recorded in
\cref{app:campaigns}. Every table and figure in this section is generated from an archived
result file rather than transcribed (\cref{sec:package}).

\subsection*{One-dimensional results}

The six subsections that follow are the experiments in which the
value-function recursion does \emph{all} the work: the state is a single
storage, so the recursion of \cref{alg:longitudinal} and the merges built on
it carry the whole computation, with no coupled block anywhere and --- except
in the last one, which adds one exact spatial trace --- no network. Three of
them sit deliberately in that regime with no network at all: the isolated
kernel (\cref{subsec:kernel-bench}), the scenario trees
(\cref{subsec:tree-bench}) and the single reservoir operated without
foresight (\cref{subsec:water-value-bench}).

\subsection{The longitudinal kernel in isolation}
\label{subsec:kernel-bench}

\Cref{tab:intro-storage} compares \pkg{} and Gurobi on the same sparse
price-taking storage instances with continuous random prices and identical
physical bounds. The optima agree to \kernelRelGap{} relative; the
end-to-end median advantage grows from $\kernelSpeedupLow\times$ at
$T=\kernelHorizonLow$ to $\kernelSpeedupHigh\times$ at
$T=\kernelHorizonHigh$, and annual hourly instances take milliseconds.
The gain comes entirely from replacing a global LP solve by the ordered
merges of \cref{alg:longitudinal}: no approximation, no state grid, no
relaxed physics, no specialised hardware.

\Cref{fig:kernel-scaling} extends the same kernel over two decades of
horizon and separates the two questions a speed-up figure conflates. The left
panel is the wall time, which reaches $T=\kernelScalingMaxN$ in
$\kernelScalingMaxTime$~seconds --- the measurement behind the abstract's
claim. The right panel is the per-step cost, which is the informative one: it
stays between $\kernelScalingUsLow$ and $\kernelScalingUsHigh$
microseconds per step across a $\kernelScalingSpan\times$ range of $T$, so
the recursion is near-linear with a small constant rather than merely fast at
one size. Both panels also carry the two certified arithmetic layers of the
contract in \cref{subsec:contract} --- $128$-bit floating point and exact
rationals --- which cost a factor of
$\kernelCertifiedFloatLow$--$\kernelCertifiedFloatHigh$ and
$\kernelCertifiedExactLow$--$\kernelCertifiedExactHigh$ respectively, and
preserve the same shape.

\begin{figure}[t]
\centering
\includegraphics[width=\linewidth]{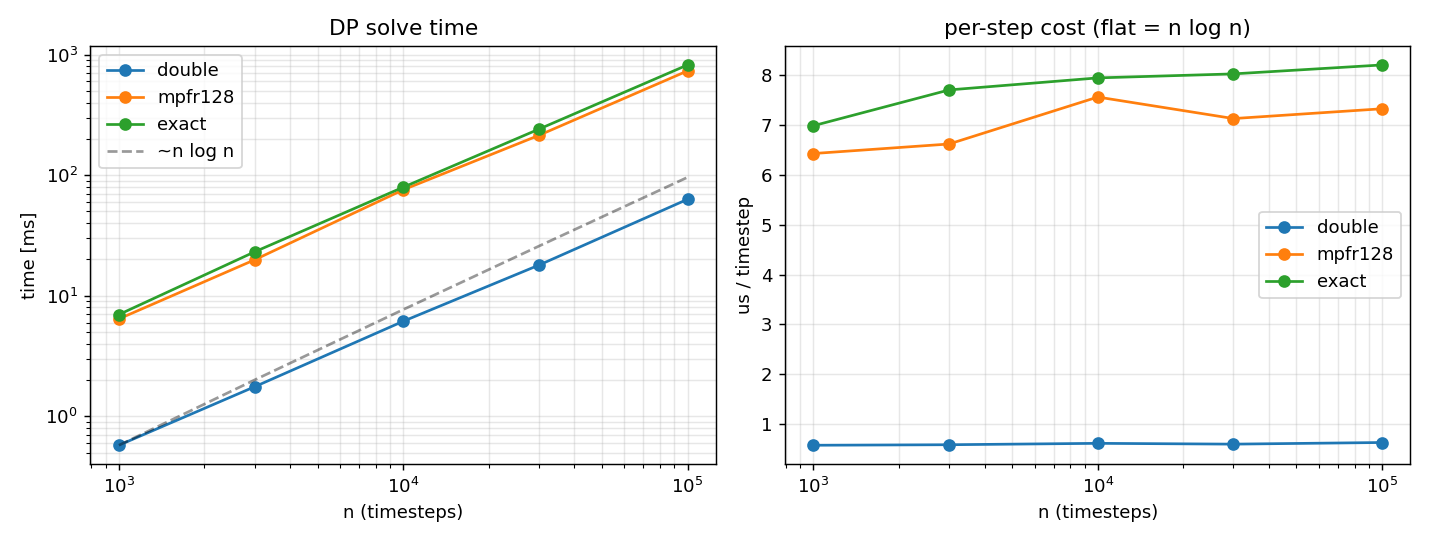}
\caption{Longitudinal kernel against horizon, at the three arithmetic layers of \cref{subsec:contract} (double, exact, mpfr128). Left: solve time, log--log, against an $n\log n$ guide. Right: microseconds per step, which is the panel that decides the question --- a \emph{flat} curve is the signature of the bound, where the left panel alone cannot separate near-linear scaling from being merely fast at one size. Double precision stays inside $[0.576, 0.631]$~$\mu$s/step over a $100\times$ range of $T$; the two certified layers cost a bounded multiple of it and keep the same shape, so exactness changes the constant and not the asymptotics. (\pkg{} 0.27.0, measured 2026-08-15.)}
\label{fig:kernel-scaling}
\end{figure}

The most informative specialised comparator is the monotonic
divide-and-conquer algorithm (MDA) of Vidal et al.~\cite{Vidal2019}: for the
linear case with one nested constraint per period both methods carry an
$O(T\log T)$ bound, with different mechanisms and different outputs
(\cref{subsec:related}). The benchmark uses identical prices and
bounds; the free terminal state converts to their fixed-total-resource
convention by one bounded zero-cost slack $z\in[-E,0]$ with
$\sum_tx_t+z=0$, which changes neither the feasible trajectories nor the
objective; and it reports separately (i) time and memory to one optimal
schedule and (ii) time and memory to the complete terminal value function,
so that a schedule-only solve is not credited with the richer output. No
public implementation is associated with the article and a written request to
the authors (2026-07-15) has gone unanswered, so the comparison runs against
a \emph{verified} reimplementation carried by the benchmark repository: a
compiled MDA implemented from the article, exact on piecewise-linear costs,
validated on several hundred randomised RAP--NC instances against an
independent LP judge with zero monotonicity violations, shipped with a
fidelity ledger recording every interpretation choice --- and superseded by
the authors' code the day it becomes available. On the storage instances of
\cref{tab:intro-storage}, between \mdaHorizonLow{} and \mdaHorizonHigh{}
periods, the specialised comparator is genuinely strong ---
\mdaVsGurobiRange$\times$ faster than the Gurobi solve, at zero objective
deviation --- and the division of outputs is what the measurement isolates:
the FLT chain returns the optimal schedule
\fltScheduleVsMdaRange$\times$ faster than MDA returns its own, and returns
the \emph{complete} terminal value function --- the object every construction
after \cref{subsec:parametric-reuse} consumes --- still
\fltValueFunctionVsMdaRange$\times$ faster than MDA's schedule alone. The
specialised race sharpens rather than weakens the division of labour: on the
schedule, everyone is exact and the chain is fastest; on the function, the
chain is alone.

\subsection{Parametric families and reuse}
\label{subsec:parametric-reuse}

The value-function output matters when the required answer is a function or
a family rather than one schedule. We first trace the complete capacity
curve of an annual price-taking store at all \reuseCurveSolves{} layer boundaries
($p=1$, $E\in[0,\reuseLayers]$), then solve \reuseFamilyK{} independent annual
price scenarios.
The capacity experiment validates every layer value against a warm Gurobi
model and every layer marginal against the duals of explicit capacity
constraints. Maximum relative value error is $4.9\,10^{-15}$ and maximum
absolute marginal error is $4.5\,10^{-11}$.

\begin{table}[t]
\centering
\caption{One parametric solve reused, against re-solving at a grid of capacities. Top: the complete 60-layer capacity curve of a single storage ($T=8{,}760$), where a uniform grid must trade missed breakpoints against wall time. Bottom: 100 independent price scenarios, Gurobi's best case for warm starting since the model shape never changes. Agreement is exact to solver tolerance throughout (max relative gap $6.5\,10^{-15}$, marginal-value curve within $2.3\,10^{-10}$ of the parametric one). (\pkg{} 0.27.0, machine \texttt{cyrius-2.home}.)}
\label{tab:parametric-reuse}
\begin{tabular}{lrrr}
\toprule
task and method & solves / instances & wall [s] & missed layers \\
\midrule
complete 60-layer curve, \pkg{} flat backend & 61 & \textbf{0.320} & 0 \\
Gurobi warm uniform grid & 11 & 0.384 & 55 \\
Gurobi warm uniform grid & 41 & 0.636 & 20 \\
Gurobi warm uniform grid & 161 & 1.000 & 0 \\
Gurobi cold uniform grid & 41 & 13.530 & 20 \\
\midrule
100 price scenarios, \pkg{} & 100 & \textbf{0.559} & -- \\
100 price scenarios, Gurobi warm & 100 & 1.713 & -- \\
100 price scenarios, Gurobi cold & 100 & 32.357 & -- \\
\bottomrule
\end{tabular}
\end{table}

Thus the complete exact FLT curve is $\reuseCurveVsExactGrid\times$ faster than
the smallest tested warm LP grid (\reuseGridExactSize{} points) that resolves all
\reuseLayers{} layers; coarse LP grids are not
equivalent outputs. Once this curve exists, a new investment cost is answered
in under $4.6\,10^{-5}$ seconds. Without tracing the curve, tangent search
locates the same three tested optima in 11--16 FLT evaluations
(0.202--0.785 seconds), versus 0.915--0.932 seconds for one Gurobi
co-optimisation per investment cost. Across \reuseFamilyK{} price scenarios,
FLToptim is $\reuseSpeedupWarm\times$ faster than the favourable warm-Gurobi
comparison and $\reuseSpeedupCold\times$ faster than cold rebuilds; objectives
agree to \reuseRelGapMax{} relative. These experiments support the reuse claim without
asserting that tracing a complete curve is necessary for a single sizing
query.

\paragraph{Joint energy--power sizing.}
Two instances exercise \cref{subsec:joint-bundle} at $T=8{,}760$; they are
research-harness measurements on the same machine, not yet folded into the
unified campaign of \cref{subsec:protocol}. For a
price-taking store on the same annual prices, homogeneity collapses the
joint problem to the duration: the one-dimensional search returns
$d^\star=34$\,h in 13 FLT solves and 0.53 seconds, and the
capacity-and-power co-optimising LP agrees exactly (0.29 seconds) --- the
reduction, not a two-dimensional method, is the right algorithm there. On
a price-responsive single node (four-technology merit order), where
homogeneity fails, the level bundle with the warm-started monolith oracle
matches the co-optimising optimum to a relative gap of $4\,10^{-7}$ in 17
cuts and 18 oracle solves: 0.48 seconds against 1.29 seconds for the
sparse co-optimising LP (HiGHS). The duration warm start ($\hat d=4$\,h
for a true $2.45$\,h) is the right order of magnitude but saves at most
one cut --- the level stabilisation is already nearly start-insensitive.

\subsection{Two-stage recourse: exactness and scaling in $K$}
\label{subsec:stochastic-validation}

The 100-scenario family above is solved scenario by scenario; the two-stage
algorithm of \cref{alg:two-stage} instead consumes such a family as the
recourse stage of a single decision. Two measurements validate it. The
instance splits a price-taking store over $T=\twoStageHorizon$ steps ---
$T_1=\twoStageSplitTOne$ deterministic hours, $T_2=\twoStageSplitTTwo$
recourse hours; power \twoStagePowerCap{} and energy \twoStageEnergyCap{}
(a four-hour store started half full); stage-2 prices \twoStagePriceGen{} ---
into a deterministic stage~1 and $K$ equiprobable
stage-2 price scenarios, $p_w=1/K$, for $K$ up to $\twoStageMaxK$, the
deterministic equivalent solved and judged at every tested $K$.

First, \emph{exactness}. The boundary state $\sigma^\star$ and the optimal
value returned by \cref{alg:two-stage} are compared against the
deterministic equivalent of~\eqref{eq:two-stage} --- one linear program with
$K$ copies of the stage-2 block sharing the stage-1 variables --- built and
solved by Gurobi under the protocol of \cref{subsec:protocol}. The maximum
relative objective difference over the tested $K$ is \twoStageMaxRelGap{},
and the boundary states agree \emph{exactly}
($\max|\Delta\sigma^\star|=\twoStageMaxSigmaGap$), i.e.\ agreement at machine
precision, as \cref{prop:two-stage-exact} predicts.

Second, \emph{scaling in $K$}. \Cref{tab:two-stage} reports, for each $K$, the
wall time, the record count of $\Qexp$, and the end-to-end time of the
deterministic equivalent, whose cost grows with the $K$-fold constraint
matrix. The striking entry is the record count: $\Qexp$ carries
\twoStageRecords{} breakpoints at every tested $K$, against the
$\sum_wm_w+O(1)$ bound of \cref{cor:poscomb} --- the merge of many scenario
cost-to-go functions produces an object no larger than one of them, because
the pointwise average of curves that agree on their slope ordering adds no
kink. The time columns move together: both the recursion and the
deterministic equivalent grow linearly in $K$ --- the multi-scenario LP is
block-diagonal --- so the time advantage the table shows is a roughly
constant factor at this horizon, not a widening one; the structural asset is
the constant record count, which is what the masters of
\cref{sec:investment} and the sweeps of \cref{subsec:trees} consume.
Scenario passes are run serially so that the comparison measures work
rather than parallelism.

\begin{table}[t]
\centering
\caption{Two-stage recourse at $T=8{,}760$ (deterministic stage~1 of $T_1=4{,}380$ steps, $K$ equiprobable stage-2 scenarios of $T_2=4{,}380$ steps). Times are end to end and serial; the FLT total is split into the $B^w$ passes (forward + $K$ backward) and the $K$-way ordered merge assembling $\Qexp$. ``records'' is the breakpoint count of $\Qexp$, bounded by $\sum_wm_w+O(1)$ (\cref{cor:poscomb}) --- observed constant in $K$, far below that bound. $\sigma^\star$ agrees with the deterministic equivalent exactly on every judged $K$.}
\label{tab:two-stage}
\begin{tabular}{rrrrrrr}
\toprule
$K$ & $B^w$ passes [s] & merge [s] & total [s] & records of $\Qexp$ & det.\ equivalent [s] & rel.\ objective gap \\
\midrule
10 & 0.063 & 0.000 & 0.063 & 5 & 1.699 & $5.9\,10^{-15}$ \\
20 & 0.120 & 0.000 & 0.121 & 5 & 3.180 & $10^{-14}$ \\
50 & 0.302 & 0.000 & 0.302 & 5 & 7.502 & $1.3\,10^{-14}$ \\
100 & 0.635 & 0.000 & 0.636 & 5 & 15.172 & $3.4\,10^{-15}$ \\
200 & 1.303 & 0.000 & 1.304 & 5 & 31.597 & $3.1\,10^{-14}$ \\
500 & 3.003 & 0.001 & 3.004 & 5 & 85.520 & $2.5\,10^{-14}$ \\
\bottomrule
\end{tabular}
\end{table}

\subsection{Multistage scenario trees: exactness and cost in tree size}
\label{subsec:tree-bench}

The two-stage measurement above merges once. \Cref{alg:tree-sweep} merges at
every node, and the question it raises is whether exactness and cost survive
the recursion. Weekly stages branch over depth$\times$branch
$\{3{\times}2,\,2{\times}3,\,4{\times}2,\,2{\times}5\}$, up to $31$ nodes and
$5{,}208$ tree-hours. Against the deterministic-equivalent LP the sweep agrees
to a worst relative objective difference of \treeExactRelGapMax{} --- machine
precision, as \cref{prop:tree-exact} predicts --- in
\treeWallLow--\treeWallHigh~ms, and the
wall time is \emph{flat} in tree size over that range while the LP grows with
it --- consistent with the accounting of \cref{rem:tree-cost}, though these
absolute times are below the resolution at which a growth
exponent can be read.
A separate control isolates the seam between two stages, which is the step the
induction of \cref{prop:tree-exact} actually turns on: seeding a stage with the
next stage's cost-to-go returns the same object as one backward pass over the
concatenation of the two, to a worst slope difference of exactly $0.0$.

This is the one regime in the paper where the value-function recursion does
all of the work. There is one storage, no network and no coupled block, so
neither the parametric trace nor a coordination sweep is involved, and what
the measurement exercises is the recursion of \cref{alg:longitudinal}
composed with the merges of \cref{cor:poscomb} and nothing else.

\subsection{Driving a year with the curves: a single reservoir on real data}
\label{subsec:water-value-bench}

\Cref{subsec:tree-bench} is synthetic. The complementary question is whether
the curves are good enough to \emph{operate} a year of real data without
foresight, which is what \cref{cor:rolling-exact} and \cref{rem:usage-value}
claim. The instance is the Norwegian reservoir of the TYNDP case taken
\emph{alone}, on its own frozen hourly dispatch-cost curves, scored against
its own perfect-foresight optimum on a scarce year; the policy prices the
reservoir at each weekly boundary with a terminal valuation and never sees
beyond it.

Four terminal valuations are compared. The exact in-year cost-to-go $B$
reproduces perfect foresight to a relative gap of
\waterValueExactGapSingleReservoir{} --- the identity of
\cref{cor:rolling-exact}, measured, and the reason the remaining columns
are pure information error. The expectation $\Qexp$ over
\waterValueEnsembleSize{} weather years
operates at \waterValueExpectedGap. In the leave-one-out test, where the
scarce year is
never seen by the expectation that prices it, the gap is \waterValueLooGap,
against
\waterValueWetYearGap{} for a single wet year --- whose water value is near
zero, so the
policy over-drains --- and \waterValueMyopicGap{} for a myopic policy with no
terminal
valuation at all. The expectation, not any single year, is the robust signal,
and this is the measured form of the argument of \cref{rem:usage-value}: the
scarcity floor that regressed policies add by hand is unnecessary once the
scarcity is priced by an expectation over futures.

The regime is again one storage, frozen curves, no cross-block coupling.

\subsection{One node inside a network: exact composition}
\label{subsec:one-node-bench}

The one-node case retains the French demand, generation and storage data
from the three-node TYNDP instance but removes interconnection. It is
therefore an annual system-dispatch problem rather than the price-only
synthetic test above, and it instantiates \cref{prop:one-storage}: one exact
spatial trace followed by one FLT solve returns the global optimum. This
composition reproduces the frontal optimum to numerical precision while
running $\annualOneNodeGurobiSpeedup\times$ faster than Gurobi and
$\annualOneNodeHighsSpeedup\times$ faster than HiGHS, in
$\annualOneNodeFltRss$~MB of peak memory. Both ratios are end-to-end; against
Gurobi's solver-only time the composition is still the faster of the two at
this single-node size. This is the last size at which the method is exact and
fastest at once: the row sits in \cref{tab:annual-comparison}, whose other
columns add storages and therefore leave the exact regime, and which is
reported with the coordination layer it measures in
\cref{app:coordination}.

\subsection*{Two-dimensional results}

Raising the state dimension to two replaces the chain by the planar
arrangement of \cref{subsec:two-store-conjugate}. The subsection below hands
one instance --- one node, one year, two co-located stores of different
duration --- to the same frontal references used above, and the block is
exact and fast on it. What that arrangement \emph{costs} is not settled by
one instance: it is the subject of the synthesis that follows.

\subsection{Eliminating a two-store block}
\label{subsec:two-store-bench}

Two co-located stores of \emph{different} duration are the smallest coupled
block, the one a plain alternation locks on, and the one
\cref{subsec:two-store-conjugate} eliminates rather than coordinates.
\Cref{tab:two-store-block} hands one instance to three engines: one node, a full
year of TYNDP FR data, a \SI{4}{\hour} battery and a \SI{100}{\hour} reservoir,
solved by the Gurobi monolithic LP, the HiGHS monolith, and the bivariate
conjugate recursion. All three return
the same optimum, the conjugate route to $\twoStoreUlpMax$ units in the last
place of the correctly rounded answer on the exact-rational test library and
$0$ relative deviation here. The frontal solvers are the comparators
throughout this paper; the LP joint block, which coordinates this same pair
instead of eliminating it, is measured against the recursion in
\cref{app:coordination}, where it is constructed.

\begin{table}[t]
\centering
\caption{The two-store joint block, one instance given to four engines: one node, a full year of TYNDP FR data, a 4\,h battery (20\,GWh/5\,GW) and a 100\,h reservoir (500\,GWh/5\,GW). The duration ratio is what breaks naive aggregation and the reason the joint block exists at all. \emph{Rel.\ dev.} is the relative deviation of the objective from the HiGHS monolith on the same case: every engine returns the same optimum. The Gurobi comparison is deliberately conservative --- its \emph{solve} time against the conjugate route \emph{end to end}, model construction, trajectory readout and value-function retention included; counting Gurobi's own model build would raise the lossless factor to $\times\twoStoreGurobiSpeedupWithBuild$. Only the conjugate route returns $V^\star_0..V^\star_{T-1}$, at $\twoStoreFamilyMbLossless$--$\twoStoreFamilyMbLossy$\,MB. Face counts $\twoStoreFacesLossless$ (lossless) and $\twoStoreFacesLossy$ (lossy), measured under \texttt{-ffp-contract=off} (\cref{rem:arrangement-size}). (\pkg{} 0.27.0, median of 3, measured T=8760.)}
\label{tab:two-store-block}
\begin{tabular}{lrrrl}
\toprule
engine & lossless [s] & lossy [s] & rel.\ dev. & returns \\
\midrule
Gurobi, monolithic LP (solve only) & 0.202 & 0.291 & $0$ & cost + schedule \\
\quad (incl.\ model build) & 0.799 & 0.490 &  &  \\
HiGHS monolith & 0.680 & 0.674 & $0$ & cost + schedule \\
\midrule
Bivariate conjugate, value only & 0.0073 & 0.0699 & $0$ & cost \\
\textbf{Bivariate conjugate, full} & \textbf{0.0132} & \textbf{0.138} & $\mathbf{2\,10^{-15}}$ & \textbf{cost + schedule + $V^\star$} \\
\bottomrule
\end{tabular}
\end{table}

Against Gurobi's \emph{solve} time --- counting the conjugate route end to end,
model construction, trajectory readout and value-function retention all included,
which is the conservative direction --- the factor is \twoStoreGurobiSpeedupRange{}
($\times\twoStoreGurobiSpeedupWithBuild$ if Gurobi's own model build is counted,
which we do not). The
spread between the lossless and lossy figures is the arrangement, not the data:
round-trip losses split each pencil in two and turn the origin rays into three,
so $F$ grows from \twoStoreFacesLossless{} to \twoStoreFacesLossy{} faces and
the step grows with it. Face counts are
the quantity quoted, for the reason given in \cref{rem:arrangement-size}: they are
a property of the fixed arrangement rather than of the clipping routines, and like
every geometric count in this paper they are reported under the floating-point
semantics they were measured with. They are also the price signal: the
per-hour step is $O(F)$, and the factor above does not survive arrangements
two orders of magnitude larger --- the synthesis of \cref{subsec:catalogue-law}
below measures the reversal across the whole catalogue axis,
\catLawTwoDLossRange$\times$ behind the frontal on traced annual network
curves. The comparison is governed by the catalogue, not by the pair.

The trajectory is exact --- the recovered schedule prices to the announced value
to $2\,10^{-15}$ relative, and the readout passes
\twoStoreReadoutInstances{} instances of the exact-rational feasibility and
optimality gate. What the table cannot show is the asymmetry that motivates the
construction: the three LP engines return a cost and a schedule, while the
conjugate route additionally returns
$V^\star_0,\dots,V^\star_{T-1}$ at
$\twoStoreFamilyMbLossless$--$\twoStoreFamilyMbLossy$\,\si{\mega\byte}.
\Cref{rem:two-store-scope} states the price of that: two stores, one node,
operation only.
Equal-duration co-located storages need no elimination and no refinement at
all: their aggregate is one storage with summed energy and power.

\subsection{The catalogue-size law: the two regimes on the same curves}
\label{subsec:catalogue-law}

This subsection is the synthesis of the two groups above: it hands the
\emph{same} curves to both readers, so it belongs after both have been
introduced. The two regimes are two readings of one variable carried by the
per-step cost --- the size of the slope catalogue. \Cref{fig:catalogue-law}
and \cref{tab:catalogue-law} read the same traced annual network curves
($T=8{,}760$, campaign of \catLawCampaignDate{} at
\pkg{}~\catLawCampaignVersion) once with the one-dimensional chain of
\cref{subsec:kernel-bench} and once with the planar block of
\cref{subsec:two-store-bench}, exactness checked at every point before any
timing is recorded.

\begin{figure}[t]
\centering
\includegraphics[width=\linewidth]{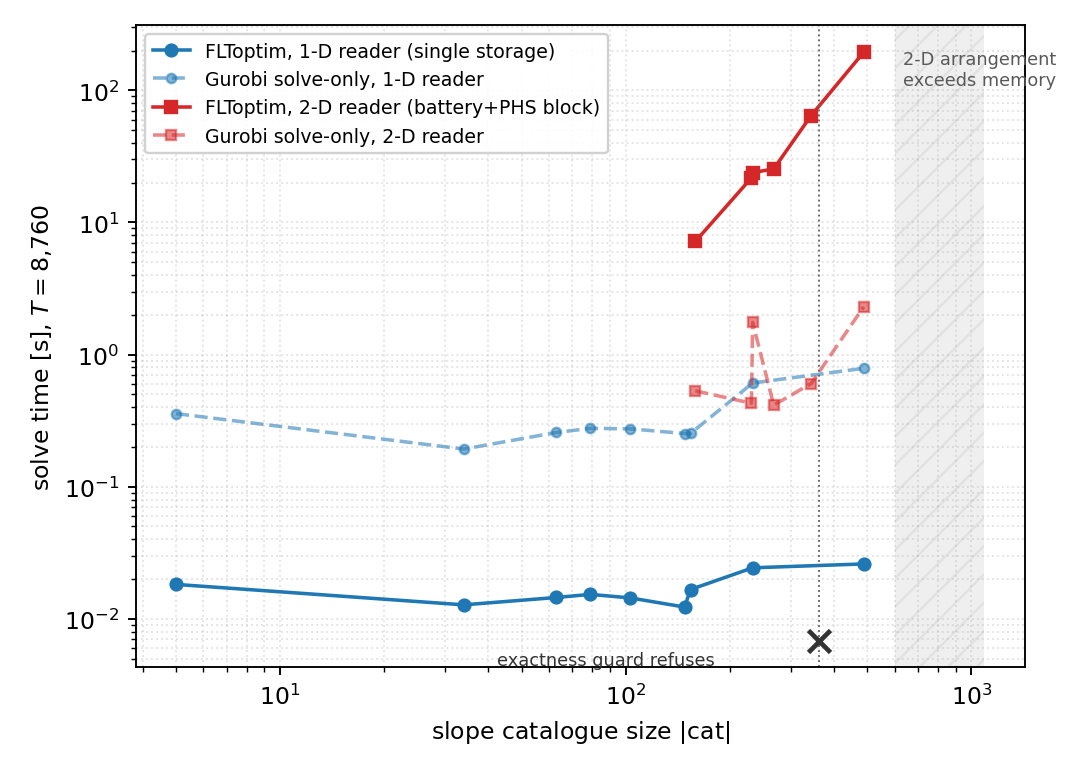}
\caption{Solve time against slope-catalogue size $|\mathrm{cat}|$ at $T=8{,}760$, log--log, two reader regimes on the same convex piecewise-linear curve family. FLToptim (1-D, single storage) beats Gurobi solve-only by $15.1$--$30.2\times$ over $|\mathrm{cat}|\in[5,488]$; the same kernel reading a 2-D battery+PHS joint block loses by $13.5$--$107\times$ over $|\mathrm{cat}|\in[159,488]$ -- at $|\mathrm{cat}|=233$ the same curve is 25.0$\times$ faster read as 1-D and 13.6$\times$ slower read as 2-D. The exactness guard refuses one 2-D point ($|\mathrm{cat}|=363$, joint solution $6.4\,10^{-9}$ suboptimal) rather than return an uncertified answer; the hatched band marks where the 2-D arrangement ($F\sim|\mathrm{cat}|^2$) exceeds this machine's memory. No line is drawn between the two regimes: the figure shows the measured envelope, not an extrapolated crossover. (\pkg{} 0.27.0, measured 2026-08-13.)}
\label{fig:catalogue-law}
\end{figure}

\begin{table}[t]
\centering
\caption{The catalogue-size law: the same traced annual network curves
($T=8{,}760$) read by a one-dimensional chain and by the two-dimensional
block, against the same frontal (Gurobi) solve. Exactness is checked at every
point before a timing is recorded. (\pkg{}~\catLawCampaignVersion, campaign of
\catLawCampaignDate.)}
\label{tab:catalogue-law}
\small
\begin{tabular}{@{}lll@{}}
\toprule
 & one-dimensional chain & two-store planar block \\
\midrule
state dimension read & $1$ (\cref{alg:longitudinal})
  & $2$ (\cref{subsec:two-store-conjugate}) \\
catalogue range measured & \catLawOneDCatMin--\catLawOneDCatMax{} slopes
  & \catLawTwoDCatMin--\catLawTwoDCatMax{} slopes \\
per-step cost law & carried record count (\cref{rem:tseng-luo})
  & constructed faces, $F\approx|\mathrm{cat}|^{2}$ \\
time vs.\ frontal solve & \catLawOneDGainRange$\times$ ahead, gain growing
  & \catLawTwoDLossRange$\times$ behind \\
the same \catLawDoubleCat-slope curve
  & \catLawDoubleOneDTime~ms vs.\ \catLawDoubleOneDGurobiTime~ms
    (\catLawDoubleOneDRatio$\times$ ahead)
  & \catLawDoubleTwoDTime~s vs.\ \catLawDoubleTwoDGurobiTime~s
    (\catLawDoubleTwoDRatio$\times$ behind) \\
worst exactness deviation & \catLawOneDExactnessMax
  & \catLawTwoDExactnessMax \\
\bottomrule
\end{tabular}
\end{table}

The quadratic law of the third row is checked directly on the constructed
arrangement: \catLawFCheckCatLow{} slopes give \catLawFCheckMeasuredLow{}
faces against \catLawFCheckPredictedLow{} predicted, and
\catLawFCheckCatHigh{} give \catLawFCheckMeasuredHigh{} against
\catLawFCheckPredictedHigh.

Four facts sharpen the table. \emph{(i) The boundary is a change of reader
dimension, not a crossover on the catalogue axis.} The same
\catLawDoubleCat-slope curve is read three orders of magnitude apart by the
two readers on identical data (penultimate row): no within-regime crossover
is observed on the measured axis and none is extrapolated. Nor can the two
regimes be brought to a common small-catalogue point at the year, because no
symmetric charge-and-discharge network curve below about
\catLawTwoDCatMin{} slopes, as the kernel counts them at float tolerance,
exists on the TYNDP nodes --- the charging side always traverses the thermal
merit order --- so the block's losing regime on these curves is not an
unlucky instance. \emph{(ii) The exactness guard refuses an instance rather
than return it}: at \catLawRefusedCat{} slopes the block detects a
\catLawRefusedSubopt{} suboptimality on a non-convex kink join and refuses.
\emph{(iii) The planar arrangement carries a memory wall of its own}: between
\catLawMemWallCatLow{} and \catLawMemWallCatHigh{} slopes its
$O(|\mathrm{cat}|^{2})$ storage demand (of order \catLawMemWallMachineGb~GB
here) exhausts the machine while the frontal solve answers in seconds. The
memory frugality reported elsewhere in this section belongs to the
one-dimensional value functions and to the decomposition built on them, not
to the planar block; the two statements are kept separate throughout.
\emph{(iv) The series is the block's best case, not its worst}: its
coefficients are unit, so it is exactly the lossless regime whose wall
directions \cref{thm:walls-d} closes. An A/B replay of the same traced curves
with genuine losses multiplies the carried face count by
\catLawLossyFaceRatioRange{} at matched horizons --- the split
charge/discharge pencils that are absent under common coefficients --- and
the warm kernel time by \catLawLossyTimeFactor$\times$ at the largest horizon
carried by both variants (campaign of \catLawLosslessCampaignDate); the
losing regime of \cref{tab:catalogue-law} is therefore a floor, not an
artefact of unfavourable efficiencies.

A posteriori, a slope audit of the raw traced curves locates both that floor
and its cause. Each catalogue splits into
\jitterEconLevelsLow{}--\jitterEconLevelsHigh{} recurring economic levels ---
the thermal merit order and the value of lost load, each present in hundreds
to thousands of hours --- plus a remainder that is sub-$10^{-4}$ tracer
dispersion on some curves and genuine congestion diversity on others: on the
most diverse curve, grouping slopes at $10^{-3}$ relative moves the objective
by \jitterDiverseGroupingCostPct{}, so the catalogue cannot in general be
compressed without changing the problem. The quadratic count, meanwhile, is a
property of the \emph{constructed} arrangement rather than of the object it
represents: on the same raw curves the realised subdivision of the lossless
block --- its maximal affine regions, measured by an independent active-set
prototype whose values match the frontal reference to \jitterGateExactMax{}
--- holds \jitterRealizedFLow{}--\jitterRealizedFHigh{} regions at the year
against \jitterBuiltFacesLow{}--\jitterBuiltFacesHigh{} constructed faces on
the identical instances, a factor of
\jitterBuiltOverRealizedLow{}--\jitterBuiltOverRealizedHigh{}. That prototype
still trails the frontal solve on these curves (measured outside the campaign
protocol), so the verdict of \cref{fig:catalogue-law} stands as measured;
what the audit changes is its attribution: the cost is the representation
this kernel carries --- exhaustive by construction, and the form on which its
exactness guard and the direction theorem are stated --- and the floor under
any symmetric network curve is the merit order's handful of levels, not the
float-tolerance count. The same split holds with losses: on the identical
lossy campaign instances the kernel constructs
\sandwichLossyBuiltLow{}--\sandwichLossyBuiltHigh{} faces where the realised
subdivision holds \sandwichLossyRealizedFLow{}--\sandwichLossyRealizedFHigh{}
regions, and a separate synthetic stress family with an all-distinct
catalogue of \sandwichLossyStressCat{} slopes realises
\sandwichLossyStressFMax{} --- so the loss multiplier of fact~(iv), like the
quadratic count, is a property of the constructed arrangement: measured on
the realised counts, against each curve's own loss-free twin at the same
horizon, losses cost only
\sandwichLossyTwinFactorLow{}--\sandwichLossyTwinFactorHigh{}. Constructed
faces and realised regions are never merged in this paper's counts.

\subsection{Oracle validation and failure modes}
\label{subsec:validation}

The lower-level operators are checked independently, and before any
composition. Forward--backward marginal capacity values
(\cref{thm:marginal-capacity}) agree with finite differences and LP duals at
approximately $10^{-13}$--$10^{-11}$. Traced hourly injection curves
(\cref{alg:trace}) are verified at random points against independent HiGHS
solves; on the coupled multi-commodity instance of \cref{app:coordination},
directional tracing is $37\times$ faster than the former 15-point sampled
builder while producing a smaller final gap.

The reproducibility package retains sampled-versus-traced,
serial-versus-parallel and alternation-versus-refinement ablations rather than
reporting only the favourable instances.

\section{Discussion and conclusion}
\label{sec:conclusion}

The operation of a single storage is a linear program, but it is not an
arbitrary linear program. Its variables form a long chain connected by one
cumulative state. By representing the cost of this state explicitly, the
Bellman recursion becomes a sequence of restrictions and infimal
convolutions; by Legendre--Fenchel conjugation, each convolution becomes an
ordered addition of convex functions. This structure explains why \pkg{}
solves long price-taking storage instances orders of magnitude faster, and
with far less memory, than a generic frontal solve.

The main contribution is therefore not a new application of dynamic
programming to storage. It is an implemented computational-convex-analysis
engine for longitudinal programs, connected explicitly to the serial
infimal-convolution algorithm of Tseng and Luo, nested resource allocation,
and monotropic network optimisation --- whose natural output is the complete
value function. That output is what the rest of the method consumes: it
supplies the marginal value of stored energy, exact capacity sensitivities
and sizing curves (\cref{thm:marginal-capacity}), and, through the
parametric dual simplex, the scalar cost curves that connect a temporal
device to any other optimisation block.

The organising question of the paper is then how far the carried state can be
widened before the construction stops paying, and the answer is sharper than
a benchmark. Two storages at one node need not be coordinated at all: the
pair can be eliminated, and the object that carries is combinatorially
rigid. Every breakline the recursion realizes is vertical, horizontal, or
borne by a line through the origin \emph{exactly}
(\cref{thm:directions}) --- no genericity, no viewport, any losses, any
horizon --- so the carried subdivision is permanently a sub-complex of one
\emph{fixed} arrangement, and the hourly step is a labelling of its faces
rather than a rebuild. For any number of co-located stores sharing one
charge--discharge coefficient the same argument runs in $\R^d$ with the
monotonicity hypothesis lifted, and the walls are position hyperplanes and
the type-$A$ indifference hyperplanes $\{s_i=s_j\}$ through the origin,
nothing else (\cref{thm:walls-d}). Both statements were arrived at
against an exact-rational arbiter rather than a floating-point census, and
that was not a formality: the census that first suggested a structural gap
in the two-store alphabet turned out to be an artefact of compiler
contraction (\cref{rem:alphabet-artefact}).

What the theorems do \emph{not} buy is speed, and the paper says so with a
measurement rather than a caveat. The directions are closed; the
\emph{positions} remain a per-instance measured alphabet, and
\cref{prop:two-closed} identifies the single operator responsible and shows
the asymmetry is structural rather than an unfinished proof. The face count
of the fixed arrangement grows as the square of the slope catalogue, and the
catalogue law of \cref{subsec:catalogue-law} reads the same curves in both
regimes: a one-dimensional chain wins the entire measured axis, ahead by
\catLawOneDGainRange{}$\times$ and growing, while the planar block trails by
\catLawTwoDLossRange{}$\times$ on the rich curves a real network produces
--- the same \catLawDoubleCat-slope curve read
\catLawDoubleOneDRatio$\times$ ahead of the frontal solve as a chain and
\catLawDoubleTwoDRatio$\times$ behind it as a block. The boundary is not a
crossover on the catalogue axis; it is the state dimension of the reader.
The planar block is therefore an exact object with a proof, valuable where
the catalogue is small and where the whole family of value functions is what
one needs --- and, at small $d$, an exact reference against which cut-based
methods on a few coupled reservoirs can be certified rather than merely
compared.

Uncertainty enters the same way, and it is the clearest case of the output
paying for itself. Because the solver returns the cost-to-go of \emph{every}
boundary state rather than the cost of one, taking an expectation over
finitely many futures is an ordered merge and not a new algorithm: the
recourse function is assembled instead of approximated, and the construction
survives its own repetition on a scenario tree, where one backward sweep
leaves the exact conditional cost-to-go at every node
(\cref{prop:tree-exact}). Against the cut-based tradition the difference is
not accuracy but the absence of a loop --- no sampling, no statistical
stopping rule, no tolerance in the definition of the expected cost-to-go ---
and the boundary is equally explicit: the merge is linear in the number of
scenarios only on a scalar state, and risk measures beyond the expectation
leave the single merge (\cref{rem:cvar}). The same curves are what a
foresight-free operation needs, and \cref{cor:rolling-exact} says a
window-by-window policy given the true cost-to-go has exactly zero truncation
error, so the residual gap of a rolling policy is an information error and
can be attributed as such --- measured at \waterValueExpectedGap{} over
eleven weather years, and \waterValueLooGap{} when the scarce year is
withheld from the expectation that prices it
(\cref{subsec:water-value-bench}).

Planning inherits both the method and its caveats. The forward--backward
capacity oracle of \cref{sec:investment} prices a small number of storages
inside a level-bundle master, and every capacity marginal comes out of one
decomposition solve, so a cut costs an operating solve whatever the number of
capacities. Scaling that to the many capacities of a full system expansion model
is deliberately not claimed here (\cref{subsec:sizing-scope}). The scenario
axis has now been measured on both sides of the catalogue law. Against traced
network-node curves the frontal barrier, block-diagonal and linear in the
number of weather years, is not caught at any measured size. On a
bounded-catalogue price-taking instance the same oracle route --- once its
master carries a feasibility frontier for the asset whose operating marginal
is zero, and a level-bundle stabilisation --- reaches certified
\kCurveGapTarget{} gaps in a cut count flat in $K$ and in the seed. It
claims no speed there: a crossover-free barrier stays
\kCurveBarrierPureAdvantageRange{} ahead. What it offers is shape --- a
predictable degradation where the default-configured frontal hits a
reproducible crossover wall at $K=64$, one setting away from repair
(\cref{subsec:sizing-scope}) --- and the catalogue decides which regime an
instance is in. On single-commodity
expansions a frontal barrier solve is simply the better tool.

Where a network is present, the method's exact reach is precisely one
intertemporal unit per coupled group: one parametric trace of the hourly
injection value followed by one solve is a globally exact solution of the
coupled dispatch (\cref{prop:one-storage}), and at annual scale it beats
both frontal references. Past that point the composition becomes
coordination, its iterates are feasible with measured gaps, and an opt-in
joint block removes the residual lock exactly on eligible groups while
buying correctness rather than speed --- its master carries every
intertemporal unit of the group over the whole horizon, so it approaches the
monolith it replaces just as the number of cutting-plane rounds grows. That
layer, and the annual multi-node, multi-energy and ramp-constrained studies
that measure it, are reported in full in \cref{app:coordination} rather than
claimed here. What those studies do establish, and it is the one durable
advantage of the composition at system scale, is peak memory: the
decomposition never forms the node-by-hour monolith, and at 30 nodes it runs
in \annualThirtyNodeMemFactorRange{}$\times$ less resident memory than the
frontal solvers while holding feasible gaps at or below $\annualGapMax$. It
does not win time there, and the paper does not claim it does. These results suggest a practical division of labour:
HiGHS and Gurobi remain the default for general or moderate one-off LP/QP
models; \pkg{} is preferable when the dominant structure is a very long
scalar cumulative chain, when many related instances must be solved, when
memory is limiting, when explicit marginal value functions are required, or
when a certified sizing loop must survive more weather years than a frontal
model can hold.

Several tasks remain. The direct MDA benchmark specified in
\cref{subsec:kernel-bench} awaits a public reference implementation; the
common driver must remove the residual timing asymmetry recorded in
\cref{app:campaigns}; and cached or incremental reduced-cost pricing and
full refactorisation have both been ruled out as meaningful end-to-end
levers by controlled ablations (\cref{app:simplex}). Promising algorithmic
directions include warm starts across capacity sweeps and exact elimination
on series--parallel compositions with small separators. On the widened
state, the road is barred by the position alphabet and the face count, never
by the directions: the $d$-store reference kernel of \cref{app:statd} now
runs byte-gated through $d=5$ with a bounded realised face law, and its
remaining wall is named --- the degenerate on-face sorting of the reduced
clamp --- so closing it is implementation work, not theory. Whether the
\emph{realised} directions stay finite once the coefficients differ, where
the direction \emph{algebra} is known to acquire an infinite multiplicative
orbit from $d=3$ on, is the open question the two theorems leave.

The broader conclusion is structural. Problem size along a longitudinal axis
need not be the difficult dimension. When the separator between past and
future remains one-dimensional and convex, exact value-function elimination
replaces a large global optimisation model by fast algebra on ordered
slopes; and when it widens to a pair, or to a coalition of co-located stores
sharing one coefficient, the elimination survives because the geometry it
generates is rigid rather than arbitrary. The fast Legendre transform makes
that algebra computationally practical. What decides whether it pays is not
the length of the chain but the size of the catalogue the step must read,
and the dimension of the state that reads it.

\printbibliography

\appendix

\section{Proofs}
\label{app:proofs}

This appendix collects the proofs of the results stated in the body, in their
order of appearance.

\paragraph{Proof of \cref{prop:closure}.}
Closed proper convex PWL functions on $\R$ are exactly the proper polyhedral
convex functions, and polyhedrality is preserved by all five
operations~\cite[\S19]{Rockafellar1970}. In detail: $f+g$ is polyhedral on
$\operatorname{dom}f\cap\operatorname{dom}g$ (refine the two piece lists and
add affine pieces), and a sum of closed functions is closed; restriction is
the sum with the polyhedral indicator $\delta_{[a,b]}$; reflection and shift
compose $f$ with an affine involution of the line, which changes neither
closedness, properness nor the piece structure. For the infimal convolution,
the conjugate of a proper polyhedral function is proper polyhedral, so
by~\eqref{eq:flt-infconv} $f\infconv g=(\conj f+\conj g)^*$ is polyhedral
and closed, and when it is proper the defining infimum is
attained~\cite[\S19]{Rockafellar1970}. Finally, a proper polyhedral function
bounded below attains its minimum on a closed interval, which is the object
returned by the minimisation operation. For the PWQ class the same steps
apply with one modification: conjugation acts by inverting the monotone
affine derivative $p=ax+b$ on quadratic pieces ($a>0$) and by exchanging
linear pieces with derivative jumps, which maps a finite ordered record list
to a finite ordered record list and hence the class to itself; sums refine
the two piece lists; and attainment in the convolution holds because on each
of the finitely many pieces the minimand is a closed convex quadratic.

\paragraph{Proof of \cref{cor:poscomb}.}
Positive scaling $f\mapsto pf$ multiplies every slope by $p>0$ and leaves
every breakpoint position unchanged, so it preserves the monotonicity of the
derivative and maps the class to itself; the pointwise sum is the first of
the five operations, covered by \cref{prop:closure}, and iterating it over
$w$ keeps the combination closed proper convex PWL, properness following
from the common domain point. For the count, apply \cref{prop:opcost}
$K-1$ times, or directly merge the $K$ position-sorted record lists in one
pass: each input record contributes at most one output record, plus $O(1)$
records for the domain endpoints, and canonical merging of coincident
positions only decreases the count. The bound is attained when the $K$
lists have pairwise distinct positions.

\paragraph{Proof of \cref{prop:opcost}.}
A canonical representation is sorted simultaneously in position and in
slope, because the derivative of a convex function is monotone. Conjugation
exchanges the two coordinates of each record and preserves their order
(\cref{subsec:representations}), so it is one linear pass, including the
canonical re-merging of records made coincident by the exchange. Addition
merges two position-sorted lists and adds slope information on overlapping
pieces: one ordered merge, at array-append cost $O(m+n)$, or with a
logarithmic factor when every insertion searches an ordered map. By
\eqref{eq:flt-infconv}, an infimal convolution is two conjugations and one
addition performed on the slope axis --- equivalently, the pieces of $f'$
and $g'$ are merge-sorted by slope. Each input record contributes at most
one output record, plus $O(1)$ records for the domain endpoints; canonical
merging of equal slopes or coincident positions only decreases the count.

\paragraph{Proof of \cref{lem:recovery}.}
For $s\in[\underline s_t,\overline s_t]$ the restriction adds zero, so
$V_t(s)=(V_{t-1}\infconv\tilde c_t)(s)$, and by \cref{prop:closure} the
infimum defining the convolution is attained. The minimand
$u\mapsto V_{t-1}(u)+\tilde c_t(s-u)$ is closed proper convex PWL
(\cref{prop:closure}, using reflection and shift on $\tilde c_t$), so its
minimiser set is a nonempty closed interval. For global optimality, select
$s_{t-1}^\star\in U_t(s_t^\star)$ and $x_t^\star=s_t^\star-s_{t-1}^\star$
backwards from $t=T$. Each $x_t^\star$ respects its bounds, since otherwise
$\tilde c_t(x_t^\star)=+\infty$; each $s_{t-1}^\star$ respects the envelope
at $t-1$, since $V_{t-1}$ already contains the restriction
$\mathcal R_{t-1}$ and $V_{t-1}(s_{t-1}^\star)<+\infty$. Telescoping the
identities $V_t(s_t^\star)=V_{t-1}(s_{t-1}^\star)+\tilde c_t(x_t^\star)$
from $t=T$ down to $t=1$ gives
$\sum_t\tilde c_t(x_t^\star)=V_T(s_T^\star)-V_0(s_0)=\min_sV_T(s)$, the
optimal value. Determinism costs nothing: $U_t(s)$ is an interval of the
represented function, not of its record list, and canonical merging
(\cref{subsec:representations}) makes endpoint selection independent of how
ties are stored.

\paragraph{Proof of \cref{prop:forward-backward}.}
Any feasible $x$ passes through some state $s$ at the cut: its prefix
$x_{1:t}$ is feasible with $s_t=s$ and costs at least $F_t(s)$, and its
suffix costs at least $B_t(s)$, so $J\ge\min_s\{F_t(s)+B_t(s)\}$. Conversely,
both partial minima are attained (\cref{prop:closure}), and the concatenation
of an optimal prefix into $s^\star$ with an optimal suffix out of $s^\star$
is feasible for~\eqref{eq:longitudinal} and achieves the right-hand side.
Finally $F_t+B_t$ is closed proper convex PWL by \cref{prop:closure}, so its
minimiser set is a nonempty closed interval.

\paragraph{Proof of \cref{cor:rolling-exact}.}
Apply \cref{prop:forward-backward} at each boundary $t$ of the partition: the
segment ending at $t$ solved with terminal valuation $B_t$ returns
$\min_s\{F_t(s)+B_t(s)\}=J$ and a state $s_t^\star$ at which pinning leaves
the optimal value unchanged; the next segment starts from that state with
its own true cost-to-go. Induction over the boundaries gives the claim.

\paragraph{Proof of \cref{prop:two-stage-exact}.}
The boundary state is the only variable linking the two stages: a plan is
feasible for~\eqref{eq:two-stage} if and only if its stage-1 part is feasible
and reaches some $\sigma\in[0,E]$ through~\eqref{eq:boundary-state}, and each
of its stage-2 parts is feasible under its scenario from $s_{T_1}=\sigma$.
Fix $\sigma$. By the definition of the forward value function
(\cref{subsec:forward}), the stage-1 cost of any such plan is at least
$F(\sigma)$, with equality for some feasible prefix, the infimum being
attained by \cref{prop:closure}. By the mirrored recursion, $B^w$ is the
value function of the stage-2 chain of scenario $w$ read from its starting
state, so $V_2^w(\sigma)=B^w(\sigma)$ for every $\sigma$ and every $w$ --- the
forward--backward identity of \cref{prop:forward-backward} applied at the cut
$T_1$ to each scenario chain --- again with attainment. Since the stage-1
decisions do not depend on $w$, the objective of~\eqref{eq:two-stage}
conditional on the boundary state is exactly
$F(\sigma)+\sum_wp_wB^w(\sigma)$. By \cref{cor:poscomb} the finite positively
weighted sum $\Qexp=\sum_wp_wB^w$ is closed proper convex PWL and its
representation is the merge of the $K$ record lists, so the represented
function is the pointwise expectation with no approximation --- proper on the
common domain, since a scenario with no feasible continuation from some
$\sigma$ enters as $+\infty$ there while any feasible plan supplies a
$\sigma$ at which all $K$ are finite; adding $F$ is
the pointwise sum of \cref{prop:closure}. Minimising the resulting closed
proper convex PWL function over $[0,E]$ is the minimisation operation, and
its minimiser set is a nonempty closed interval. Finally, concatenating an
optimal stage-1 prefix into $\sigma^\star$ with an optimal stage-2 suffix out
of $\sigma^\star$ in each scenario is feasible and attains the bound;
\cref{lem:recovery} produces both, deterministically and without tie-breaking
perturbation.

\paragraph{Proof of \cref{prop:tree-exact}.}
Induction on the height of $\nu$. At a leaf, $B_\nu$ is the cost-to-go of a
single longitudinal chain with terminal valuation $\Qexp_\nu$, exact by the
backward recursion of \cref{subsec:forward-backward} and in the class by
closure of the five operations (\cref{prop:closure}). For the induction step,
assume every child $c\in C(\nu)$ carries its exact $B_c$ in the class. The
children's entry state is the exit state of $\nu$, and the conditional
distribution of the child does not depend on the decisions taken inside
$H_\nu$, so the expected continuation cost seen from the exit state $u$ of
$\nu$ is the pointwise combination $\sum_c p_c B_c(u)$; by \cref{cor:poscomb}
that combination is closed proper convex PWL and its record list is the merge
of the children's, with no approximation (proper: any feasible subtree plan
supplies an exit state at which every child's continuation is finite). Appending it as terminal valuation
to the block $H_\nu$ and running one backward pass returns, by
\cref{prop:forward-backward} applied inside the block, the exact minimum over
feasible node-$\nu$ trajectories of block cost plus expected continuation
cost, i.e.\ $B_\nu$. Non-anticipativity is structural, exactly as in the
two-stage case: the decisions of $H_\nu$ are a single schedule, indexed by
$\nu$ and not by which child is realised, and they communicate with the
continuations only through the scalar exit state.
\emph{Recovery.} Fix a node $\nu$ and the entry state $\sigma_\nu$ reached
when it is visited ($\sigma_{\mathrm{root}}=s_0$). Pin the block at that
entry and append $\Qexp_\nu=\sum_{c\in C(\nu)}p_cB_c$ as terminal valuation:
writing $F^{\nu}$ for the cost-to-arrive functions of the pinned block, the
exit selection
\[
u_\nu^\star\in\argminop_{u}\bigl\{F^{\nu}_{|H_\nu|}(u)+\Qexp_\nu(u)\bigr\}
\]
minimises a closed proper convex PWL function over the confined domain, so
by the argument of \cref{prop:forward-backward} the minimum is attained, the
argmin is a nonempty closed interval, and any deterministic endpoint rule
selects from it --- the situation of \cref{lem:recovery}, whose backward
descent inside the block then returns increments of block cost exactly
$F^{\nu}_{|H_\nu|}(u_\nu^\star)$, with
$F^{\nu}_{|H_\nu|}(u_\nu^\star)+\Qexp_\nu(u_\nu^\star)=B_\nu(\sigma_\nu)$ by
the identity the induction step established. Every child $c$ is entered at
the same state $u_\nu^\star\in[0,E]$, and by induction on the height the
descent below $c$ recovers schedules of expected cost $B_c(u_\nu^\star)$
conditional on reaching $c$. Taking the expectation over the child realised
at the junction, the recovered policy costs
$F^{\nu}_{|H_\nu|}(u_\nu^\star)+\sum_cp_cB_c(u_\nu^\star)=B_\nu(\sigma_\nu)$
from $\nu$. Nothing in the block schedule or in $u_\nu^\star$ depends on
which child is realised, so the policy is non-anticipative, and descending
from the root it attains $B_{\mathrm{root}}(s_0)$, which the value part
identifies as the extensive-form optimum.

\subsection{Thinning the catalogue: the certified bracket, and what it finds}
\label{subsec:pair-sandwich}

The step of \cref{subsec:pair-cost} pays the slope catalogue, so the
question whether the catalogue can be thinned is a fair one. Uncertified
thinning is not an option --- the audit of \cref{subsec:catalogue-law}
exhibits a curve on which grouping slopes at $10^{-3}$ relative moves the
objective by \jitterDiverseGroupingCostPct{} --- but a \emph{certified}
scheme exists, and it is elementary in the present framework. Keep a
sub-catalogue (every tenth slope, say). Dropping the other pieces of each
hourly maximum yields $\psi_t^{-}\le\psi_t$; interpolating each $\psi_t$
chordally on the knots of the kept slopes, over the whole window the block
reads, yields $\psi_t^{+}\ge\psi_t$: two convex piecewise-linear curves on
the thin catalogue.

\begin{proposition}[Certified catalogue bracket]
\label{prop:bracket}
Let every hourly curve satisfy $\psi_t^{-}\le\psi_t\le\psi_t^{+}$ pointwise
on a window covering the range the recursion reads, all three convex
piecewise linear and finite there. Then \emph{(i)} the sweeps run on the
two envelopes bracket the true value functions pointwise ---
$W^{-}_t\le W_t\le W^{+}_t$ at every hour, at every state, in both
directions of time; \emph{(ii)} the bracket width obeys
$\sup\,(W^{+}_T-W^{-}_T)\le\sum_t\sup\,(\psi_t^{+}-\psi_t^{-})$;
\emph{(iii)} any refinement loop that, while the bracket is open somewhere,
reinserts at least one dropped slope of some hourly curve terminates in
finitely many rounds on the exact value function --- and every intermediate
round is a two-sided certificate.
\end{proposition}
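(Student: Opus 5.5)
The plan is to prove (i) by monotonicity of each operator in the recursion, derive (ii) from (i) by a telescoping bound on the gadget, and get (iii) from finiteness of the dropped-slope set plus exactness once nothing is dropped.

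For (i), I would work on the primal side, with the one-hour map of \eqref{eq:pair-primal}. There, $Z\mapsto (Z+\iota_{[0,E_1]\times[0,E_2]})\infconv\check G_t$, and the gadget $\check G_t$ is itself an infimal projection of $\psi_t$ composed with the load maps over the power box. Three facts are needed.
\begin{itemize}
\item The gadget is pointwise monotone in $\psi_t$: if $\psi\le\psi'$ on the window, then the infimum over controls defining $G_t$ satisfies $G\le G'$. This is the one place where the window hypothesis enters, because only injections inside the window are ever evaluated.
\item Restriction to the level box preserves pointwise order.
\item Infimal convolution is monotone in each argument: $f\le f'$ and $g\le g'$ imply $f\infconv g\le f'\infconv g'$.
\end{itemize}
Induction on the hours, starting from the common initial object (a point or the box indicator), then gives $W^-_t\le W_t\le W^+_t$. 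The backward sweep is handled identically after the reflection of \cref{subsec:forward-backward}. Pinned or free terminal valuations are shared by all three sweeps, so they do not disturb the order.

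For (ii), write $\varepsilon_t=\sup(\psi^+_t-\psi^-_t)$ over the window. Then $G^+_t\le G^-_t+\varepsilon_t$ pointwise on the gadget's domain: take the same optimal control for $G^-_t$ and evaluate it under $\psi^+_t$. The one-hour map commutes with adding a constant, since $(f+c)\infconv g=(f\infconv g)+c$ and restriction keeps the constant. Combining this commutation with the monotonicity from (i) gives $W^+_{t+1}\le W^-_{t+1}+\sup_\ell(W^+_t-W^-_t)+\varepsilon_t$. Telescoping over the hours yields the bound, with the initial gap equal to zero.

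\textbf{Step (iii): termination, the expected obstacle.} There are finitely many hours, and each $\psi_t$ has finitely many pieces, so the set of dropped slopes is finite. Each round strictly shrinks it, so the loop must stop. It stops either when the bracket closes, or when every slope has been reinserted. In the second case I must show that the envelopes coincide with $\psi_t$ on the window, so that $W^\pm=W$ and (ii) gives width zero.
\begin{itemize}
\item For $\psi^-_t$ (the hourly maximum over kept pieces) this is immediate.
\item For the chordal $\psi^+_t$, I first need to check that interpolating a convex piecewise-linear function at the knots of all its slopes reproduces it exactly. This holds because every breakpoint is then a knot, so the chords are the pieces themselves.
\item I also need to check that the construction keeps $\psi^-_t\le\psi_t\le\psi^+_t$ at every intermediate round, so that (i) applies throughout. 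For the chordal envelope this follows from convexity of $\psi_t$. I expect this step to need the most care: the window endpoints must be knots for the chord to dominate up to the boundary, and I would add them explicitly.
\end{itemize}

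Finally, the certificate claim for intermediate rounds is just (i) applied at that round. The width given by (ii), together with the attained values $W^-$ and $W^+$, then brackets the true optimum.
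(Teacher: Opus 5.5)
Your proposal is correct and follows the paper's proof. For (i) it uses pointwise monotonicity of restriction and infimal convolution with induction over the hours; for (ii) your monotone-plus-commutes-with-constants telescoping is exactly the uniform nonexpansiveness the paper invokes; for (iii) it uses finiteness of the catalogue, with exactness by (i) once every slope is reinserted, and you spell out the chordal-majorant checks (knots at the window endpoints, exact reproduction of $\psi_t$ when all knots are present) that the paper only asserts.

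One small correction: the gadget $G_t(x)=\psi_t(L_1(x_1)+L_2(x_2))+\iota_{B_1\times B_2}(x)$ is a direct composition, not an infimal projection over controls. So $G^-_t\le G_t\le G^+_t$ and $G^+_t\le G^-_t+\varepsilon_t$ hold pointwise in $x$ on the power box, with no optimal-control argument needed.
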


\begin{proof}
(i) Each operator of the sweep is monotone for the pointwise order:
addition of a fixed function and restriction trivially, and infimal
convolution because $f\le f'$ and $g\le g'$ give
$\inf_u f(u)+g(x-u)\le\inf_u f'(u)+g'(x-u)$; induction along the hours.
(ii) The same operators are nonexpansive in the uniform norm --- an infimum
moves by at most the uniform distance of its argument, and a restriction
does not increase it --- so the hourly envelope gaps add at worst.
(iii) The catalogue is finite and every open round strictly grows the kept
set, so after finitely many rounds $\psi_t^{-}=\psi_t=\psi_t^{+}$ at every
hour, and the bracket is exact by (i).
\end{proof}

Where the two value functions coincide, the thin solution is
\emph{certified} exact; where they differ, the readout of the two sweeps
names the hours and segments responsible, and the median dropped slope of
each active window is reinserted. One caution on the rate, because the
classical one does not apply: for twice-differentiable convex data,
sandwich approximation converges as $O(1/n^{2})$ in the number of kept
knots, but the hourly curves here are piecewise linear --- their curvature
is concentrated at finitely many kinks, and dropping a kink opens an
envelope gap proportional to its slope jump times its breakpoint spacing,
first order in the dropped content. This is not a defect of the scheme but
the geometry of the data, and it is why the bracket, run on measured
curves, finds so little to compress: the audit of
\cref{subsec:catalogue-law} shows the economic content of a traced
catalogue to be a handful of sharp levels, and sharp levels are exactly
what an envelope cannot smooth over without opening a first-order gap.

Measured, the scheme is correct and finds nothing to compress --- on each
of the three instance families it was run on, named below with their own
counts, because the families live at different horizons and catalogue sizes
and no ratio across two of them is licit. On a park-catalogue control it
terminates exactly in \sandwichRoundsControl{} rounds. On the raw traced
network curves --- the audit family of \cref{subsec:catalogue-law} --- it
reproduces the exact answer with an active alphabet of
\sandwichActiveAlphabetNetwork{} slopes, the realised subdivision holding
\jitterRealizedFLow{} to \jitterRealizedFHigh{} regions; termination
matches the reference to \sandwichGateWorst{} throughout, with and without
losses. On all-distinct \emph{stress} catalogues --- synthetic, built so
that no slope repeats --- the active alphabet \emph{ramps}
(\sandwichStressActiveShare{} of the catalogue loss-free,
\sandwichLossyRampShareLow{} to \sandwichLossyRampShareHigh{} with losses),
and the realised count still saturates: a plateau of
\sandwichFvsKSatLow{}--\sandwichFvsKSatHigh{} while the loss-free catalogue
grows to $320$, and \sandwichLossyStressFMax{} regions on a lossy stress
catalogue of \sandwichLossyStressCat{} slopes. On the traced
\emph{campaign} instances of \cref{subsec:catalogue-law}, finally, the
lossy variants realise \sandwichLossyRealizedFLow{} to
\sandwichLossyRealizedFHigh{} regions against
\sandwichLossyBuiltLow{}--\sandwichLossyBuiltHigh{} \emph{constructed}
faces on the identical instances --- a factor of
\sandwichLossyBuiltOverRealizedLow{} to
\sandwichLossyBuiltOverRealizedHigh{} --- and the loss multiplier, measured
against each curve's \emph{own} loss-free twin at the same horizon, is
\sandwichLossyTwinFactorLow{}--\sandwichLossyTwinFactorHigh{}. In every
family the same reading: the split charge/discharge pencils multiply the
\emph{construction}, not the object. What the bracket leaves behind is therefore not a speed-up but
an instrument and a certificate: it is the scheme that makes slope grouping
admissible where data would otherwise be silently degraded, and its exact
sweeps are what measured the realised counts above.

\section{Directional stationarity of the two-store subdivision}
\label{app:stat}

This appendix proves \cref{thm:directions}, whose statement and proof
architecture are given in \cref{subsec:pair-theorem}: it supplies the formal
setting, the two lemmas of the primal reading, the three preservation lemmas,
and the induction. What it does not bound, deliberately, are the vertical and
horizontal \emph{positions}: the clamp's new rational is a new position, never
a new direction (\cref{prop:two-closed}(iii)), and the position alphabet
remains the measured, per-instance quantity of
\cref{subsec:two-store-conjugate}.

\paragraph{Setting and standing hypotheses.}
Two storages $i=1,2$ share one node. Storage $i$ has reservoir-side control
$x_i$ in a box $B_i$, level $\ell_i\in[0,E_i]$, and a load map $L_i$ ---
piecewise linear through the origin with slopes $c_i^-$ for $x<0$ and $c_i^+$
for $x\ge0$. The hourly gadget of \eqref{eq:bivariate-step} is
\begin{equation}
G_t(x)\;=\;\psi_t\bigl(L_1(x_1)+L_2(x_2)\bigr)\;+\;
\iota_{B_1\times B_2}(x),
\label{app:stat-eq-gadget}
\end{equation}
where $\psi_t$ is the node's hourly curve in the pair's combined-load
direction. Throughout the appendix:
\begin{enumerate}
\item[(i)] each $\psi_t:\R\to\R$ is convex, piecewise linear, finite and
  \emph{non-decreasing}. Monotonicity is not decorative: it is what makes the
  composition \eqref{app:stat-eq-gadget} convex (convex non-decreasing
  composed with the convex $L_1+L_2$), and it is not a free fact of the
  model --- the traced curves are non-decreasing whenever curtailment is
  free, the same property already invoked before \cref{prop:one-storage} ---
  so without free curtailment it is a genuine hypothesis of
  \cref{thm:directions}, not a modelling remark;
\item[(ii)] $0<c_i^-\le c_i^+$ for both storages (the load maps are convex,
  equivalently the round-trip efficiency is at most one); the degenerate case
  $c_i^-=0$ is handled by \cref{app:stat-degenerate};
\item[(iii)] the power boxes $B_{i}$ (possibly per hour) are non-empty and
  compact, the level boxes are $[0,E_i]$ with $E_i\ge0$, and every statement
  is conditional on feasibility --- when a restriction empties a domain the
  recursion reports infeasibility and carries nothing;
\item[(iv)] the objects below are the \emph{exact} conjugates of
  \eqref{eq:bivariate-step}; a max-representation of them may carry redundant
  pieces, and every statement about breaklines refers to boundaries between
  maximal affine regions --- equivalently, by \cref{app:stat-bridge}, to
  pieces witnessed on sets of positive area;
\item[(v)] the coefficients $c_i^\pm$ are fixed across the instance.
\end{enumerate}

\begin{definition}[Realized breakline]
\label{app:stat-realized}
A \emph{realized breakline} of a convex piecewise-linear function of the
price plane is a segment of positive length on which two of its maximal
affine regions meet. Two distinct maximal regions of a convex function
necessarily carry distinct gradients, so the supporting line of the shared
segment is well defined: for regions with data $(g_i,c_i)$ and $(g_j,c_j)$ it
is $\{\langle g_i-g_j,\,s\rangle = c_j-c_i\}$.
\end{definition}

\paragraph{The primal reading.}
Write $Z$ for the primal function whose conjugate the recursion carries ---
the cost-to-arrive of the pair in the forward sweep, the cost-to-go in the
backward one --- so that one hour of \eqref{eq:bivariate-step} is, on the
primal side,
\begin{equation}
Z'\;=\;\bigl(Z+\iota_{[0,E_1]\times[0,E_2]}\bigr)\;\infconv\;\check G_t,
\label{app:stat-eq-primal}
\end{equation}
with $\check G_t=G_t$ in one sweep and $\check G_t=G_t(-\cdot)$ in the
other, the reflection matching the direction in which time is traversed. The conjugate identities
behind \eqref{eq:bivariate-step} --- the level box becoming two directional
gradient clamps, the inf-convolution becoming an addition --- are exact here
because all operands are polyhedral: the closure operation of the general
biconjugation theorem is not needed, the relevant infima being attained
(Rockafellar, Thm.~16.4 and Cor.~19.3.4), and the non-emptiness condition of
the latter is precisely the non-infeasible branch of
\cref{app:stat-restrict}. Every audited object of the recursion --- after
each one-sided clamp and after each addition --- is therefore the conjugate of
one of the primal objects $Z$, $Z+\iota_{\{\ell_1\le E_1\}}$, \dots,
$Z+\iota_{\mathrm{box}}$, $Z'$.

All primal objects have compact non-empty domains: $\{\ell_0\}$ (or the level
box, for the free-terminal backward start) is compact; a restriction to a
half-plane preserves compactness, non-emptiness being hypothesis~(iii); and
an inf-convolution with $\check G_t$ gives $\operatorname{dom} Z' =
\operatorname{dom} Z \pm B_1\times B_2$, a sum of compacta. Their conjugates
are finite on all of $\R^2$.

For a proper closed polyhedral convex $f$ with compact domain, the faces of
$\operatorname{epi} f$ split into \emph{graph} faces (recession cone not
containing the vertical direction) and vertical ones; write $D(f)$ for the
projected lattice of graph faces --- its $2$-cells are the maximal affinity
regions of $f$, its $1$-faces the \emph{edges} (domain-boundary edges
included), its $0$-faces the \emph{vertices}. Let
\[
R^{*}\;=\;\bigl\{\,(c_2^{\sigma_2},\,-c_1^{\sigma_1})\;:\;
\sigma_1,\sigma_2\in\{-,+\}\,\bigr\}
\ \text{purged of its axis-parallel members}
\]
--- at most four unoriented directions, none axis-parallel under~(ii). Say
that $f$ satisfies the invariant $\mathbf J$ when \emph{(J1)} every edge of
$D(f)$ is parallel to $e_1$, to $e_2$, or to a member of $R^{*}$, and
\emph{(J2)} $f$ is \emph{constant} along every edge of direction in $R^{*}$.

\begin{lemma}[Bridge: active pieces are vertices]
\label{app:stat-bridge}
Let $f$ be proper closed polyhedral convex with compact domain and let
$\max_k(\langle g_k,\cdot\rangle+c_k)$ be any exact representation of
$\conj{f}$. If a piece $(g,c)$ attains the max on a set of positive area,
then $g$ is a vertex of $D(f)$ and $c=-f(g)$.
\end{lemma}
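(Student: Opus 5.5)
Since $f$ has compact domain and is polyhedral, $\conj f(s)=\max_{u\in\operatorname{dom}f}\{\langle s,u\rangle-f(u)\}$ is a finite maximum of affine functions of $s$. It is attained, and it suffices to range $u$ over the vertices of $D(f)$. The plan is to compare the given exact representation with this canonical one on the positive-area set, and read off $g$ from the subdifferential.

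\emph{Step 1: the canonical representation.} The graph of $f$ over $\operatorname{dom}f$ is the lower boundary of $\operatorname{epi}f$. A linear functional $(s,-1)$ is bounded above on the epigraph because the domain is compact, and it is maximised on a graph face. That face contains a graph vertex $(v,f(v))$ with $v$ a vertex of $D(f)$. Hence
\[
\conj f(s)=\max_{v\in\mathrm{vert}\,D(f)}\{\langle v,s\rangle-f(v)\}.
\]
For each $s$ the set $\partial\conj f(s)=\operatorname{argmax}_u\{\langle s,u\rangle-f(u)\}$ is the projection of the exposed graph face (Rockafellar, Thm.~23.5). There are finitely many exposed faces. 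So off a finite union of lines (the normal-fan walls) the exposed face is a single vertex and $\conj f$ is differentiable with gradient equal to that vertex.

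\emph{Step 2: compare with the given piece.} Let $A$ be the set of positive area on which $\langle g,s\rangle+c=\conj f(s)$. Removing the finitely many walls of Step~1 leaves a set $A'\subseteq A$ of positive area. On $A'$, $\conj f$ is differentiable, so $\partial\conj f(s)=\{\nabla\conj f(s)\}$ is a single vertex $v(s)$. Since $\conj f=\max_k(\cdots)$ and piece $(g,c)$ is active at $s$, $g\in\partial\conj f(s)$: an active affine minorant touching a convex function at a point is a supporting hyperplane there. Hence $g=v(s)$ is a vertex of $D(f)$. Then $c=\conj f(s)-\langle g,s\rangle=-f(g)$, because $g\in\partial\conj f(s)$ gives Fenchel equality $\conj f(s)+f(g)=\langle g,s\rangle$, using $f^{**}=f$ by closedness.

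\emph{Main obstacle.} The delicate point is Step~1's claim that the exposed face of the epigraph is a vertex for almost every $s$. It requires that the normal cones of the positive-dimensional graph faces be lower-dimensional. A graph face of dimension $\ge1$ contains a segment in some direction $d$, so its normal cone lies in $\{s:\langle s,d\rangle=\text{const}\}$, a line. Vertical faces are never exposed by $(s,-1)$, which is why compactness of the domain and the graph/vertical split of the setting are used. I would state this as the normal-fan fact for polyhedra and cite Rockafellar \S19 rather than reprove it.
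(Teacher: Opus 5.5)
Your proof is correct. Its first step is the same as the paper's: the formula $f^*(s)=\max_v\{\langle v,s\rangle-f(v)\}$ over the vertices of $D(f)$, read off the face of $\operatorname{epi}f$ exposed by $(s,-1)$. You should say why that face contains a vertex. The paper uses the pointedness of $\operatorname{epi}f$; alternatively, the face is a non-empty compact polytope.

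You diverge from the paper in the second step.

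\emph{The paper's route.} It never determines where $f^*$ is differentiable. It covers the positive-area set $A$ by the finitely many closed sets $A_v=\{s:\ f^*(s)=\langle v,s\rangle-f(v)\}$. By pigeonhole, one $A_v$ meets $A$ in positive area. Two affine functions that agree on a set of positive area coincide, so $(g,c)=(v,-f(v))$.

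\emph{Your route.} You remove the normal-fan walls, a finite union of lines, and argue at a single remaining point $s\in A$. There the active piece is a global affine minorant touching $f^*$, so $g\in\partial f^*(s)$. By Rockafellar's Theorem~23.5, $\partial f^*(s)$ is the set of maximisers of $\langle s,\cdot\rangle-f$, which is the single vertex $v(s)$. Fenchel equality then gives $c=-f(g)$.

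\emph{What each buys.} The paper's argument is shorter and more elementary. The step you flag as the main obstacle never comes up, and it needs neither Theorem~23.5 nor $f^{**}=f$. Your argument is local: one point of $A$ off the walls already identifies the piece. It also front-loads the normal-fan dimension count, namely that the prices exposing faces of dimension $\ge1$ lie in finitely many lines. The paper only uses that kind of count in the following reading lemma, \cref{app:stat-read}, and its $d$-dimensional form.
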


\begin{proof}
First, $\conj{f}(s)=\max_v[\langle v,s\rangle-f(v)]$ over the vertices $v$ of
$D(f)$: the sup of $\langle\cdot,s\rangle-f$ over the compact domain is
attained, its attainment set lifts to the face of $\operatorname{epi}f$
exposed by $(s,-1)$, that face lies in the graph (at $t>f(x)$ the objective
is strictly dominated by $(x,f(x))$), and $\operatorname{epi}f$ is pointed
(its recession cone $\{0\}\times\{0\}\times\R_{\ge0}$ contains no line), so
the face contains a vertex of $\operatorname{epi}f$ --- necessarily a graph
point --- whose projection is a vertex of $D(f)$. Now cover the
positive-area set by the finitely many closed sets on which one vertex piece
attains: one of them meets it in positive area, and two affine functions
which agree on a set of positive area agree identically, giving $g=v$ and
$c=-f(v)$.
\end{proof}

\begin{lemma}[Reading realized breaklines in the primal]
\label{app:stat-read}
Let $f$ be as in \cref{app:stat-bridge}, and let two maximal affine regions
of $\conj{f}$, with affine data $(u,c_u)$ and $(v,c_v)$, meet along a segment
of positive length. Then $u\ne v$ are vertices of $D(f)$ with $c_u=-f(u)$ and
$c_v=-f(v)$, the segment $[u,v]$ is an \emph{edge} of $D(f)$, and the
supporting line of the realized breakline is
$\{\langle u-v,\,s\rangle=f(u)-f(v)\}$.
\end{lemma}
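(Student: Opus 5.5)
The argument starts from \cref{app:stat-bridge}, which handles the two pieces. Since the two regions are maximal affine regions of $\conj f$ and they meet along a segment of positive length, each of them has positive area. So the affine data of each region is a piece that attains the maximum on a set of positive area. The bridge lemma then gives that $u$ and $v$ are vertices of $D(f)$, with $c_u=-f(u)$ and $c_v=-f(v)$. Distinctness follows from \cref{app:stat-realized}: two distinct maximal regions of a convex function carry distinct gradients, so $u\ne v$. The supporting line is read off directly as $\{\langle u-v,s\rangle=c_v-c_u\}=\{\langle u-v,s\rangle=f(u)-f(v)\}$.

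The substantive claim is that $[u,v]$ is an edge of $D(f)$. I would use exposure. Pick a point $s_0$ in the relative interior of the shared segment. Close to $s_0$, only the two pieces attached to $u$ and $v$ are active. This holds because every other vertex piece is either strictly below the maximum at $s_0$, or it ties there and then must be active on a set of positive area near $s_0$, which would force it to be one of the two regions by maximality.

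From this, I would identify the argmax set of $\langle\cdot,s_0\rangle-f$ over $\operatorname{dom}f$. It is a face $\Phi$ of $D(f)$: the projection of the graph face of $\operatorname{epi}f$ exposed by $(s_0,-1)$, as in the proof of the bridge lemma. The vertices of $\Phi$ are exactly the vertices whose pieces are active at $s_0$, so they are exactly $u$ and $v$. Because $\Phi$ is a polytope, it is the convex hull of its vertices, so $\Phi=[u,v]$. That makes $[u,v]$ a one-dimensional face, that is, an edge of $D(f)$.

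I expect the main obstacle to be the local-activity step, specifically ruling out a third vertex $w$ whose piece ties with the other two at $s_0$. If such a $w$ existed, its piece would coincide with the maximum at $s_0$ and be dominated everywhere. Since $\conj f$ is the maximum of finitely many vertex pieces and $s_0$ lies in the interior of the segment, every neighbourhood of $s_0$ has positive area inside the two regions, where the maximum equals the $u$-piece or the $v$-piece. The convex function $\conj f-\ell_w$ vanishes at $s_0$ and is nonnegative. I would show that it vanishes on a set of positive area on at least one side of the segment, using the fact that it is piecewise affine and the zero set of a nonnegative piecewise-affine convex function is convex.

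That convex zero set contains $s_0$. If it lay entirely inside the segment's line, then $\ell_w$ would coincide with the $u$-piece and the $v$-piece only along that line. I would then show $w$ is a convex combination of $u$ and $v$ from the subgradient inclusion $\partial\conj f(s_0)=[u,v]$, using $\partial\conj f(s_0)=\operatorname{conv}$ of the active gradients. Any vertex $w$ of $D(f)$ lying on the segment $[u,v]$ with $w\ne u,v$ is impossible, because a vertex cannot be an interior point of a segment inside the face $\Phi$.

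So the cleaner route is to prove $\Phi=\partial\conj f(s_0)=[u,v]$ directly. Near $s_0$, $\conj f=\max(\ell_u,\ell_v)$, hence its subdifferential at $s_0$ is $[u,v]$. The inverse-subdifferential relation then identifies that set with the argmax face $\Phi$. I will use this route and avoid the case analysis.
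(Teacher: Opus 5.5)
Your final route differs from the paper's and can be made to work, but its load-bearing step is asserted rather than proved. Write $R_u,R_v$ for the two regions and $\ell_u(s)=\langle u,s\rangle+c_u$, with $\ell_v$ defined likewise. Everything rests on the sentence ``near $s_0$, $\conj f=\max(\ell_u,\ell_v)$''. That is what gives $\partial\conj f(s_0)=[u,v]$, after which $\partial\conj f(s_0)=\operatorname{arg\,max}(\langle\cdot,s_0\rangle-f)=\Phi$ finishes the proof. The only support you offer is the local-activity paragraph, and it presupposes its conclusion. First, the claim that a vertex piece tying at $s_0$ is active on positive area near $s_0$ is a converse of the bridge lemma, which you do not prove. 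Second, even granting it, ``by maximality it is one of the two regions'' requires that a small disc around $s_0$ is already covered by $R_u\cup R_v$ --- and that is the local form itself. So the ``cleaner route'' does not avoid the obstacle you flagged; it moves it into one unproved sentence.

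The repair is short, and it makes the third-vertex question irrelevant. Both regions are closed, convex, of positive area, and contain the shared segment. Take $s_0$ in the segment's relative interior. The triangle spanned by a small subsegment around $s_0$ and a point of $R_u$ off the segment's line lies in $R_u$, so $R_u$ contains a closed half-disc about $s_0$; likewise $R_v$. The two half-discs lie on opposite sides of the line. Otherwise $\operatorname{int}R_u\cap\operatorname{int}R_v\neq\emptyset$, so $\ell_u=\ell_v$ on an open set, contradicting $u\ne v$. Also $\ell_u,\ell_v\le\conj f$ everywhere, since $c_u=-f(u)$ and $c_v=-f(v)$. Hence $\conj f=\max(\ell_u,\ell_v)$ on a whole disc, and any other piece tying at $s_0$ simply lies in $\partial\conj f(s_0)=[u,v]$.

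So repaired, your argument computes the exposed face exactly, at every relative-interior point, through the inverse-subdifferential identity, and it needs no face-incidence facts. The paper never examines $\conj f$ near $s$. It takes $s$ away from the finitely many gradients of $2$-cells of $D(f)$, so the exposed face $F(s)$ is not two-dimensional. Since $F(s)$ contains $u\ne v$, it is a $1$-face. It equals $[u,v]$ because a vertex of $D(f)$ lying in a face is a vertex of that face, and a compact $1$-face has exactly two vertices. That dimension count replaces your half-disc geometry, and it is what the paper carries over to dimension $d$ as a stratification count.
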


\begin{proof}
The gradients of two distinct maximal regions differ
(\cref{app:stat-realized}), and each region has positive area, so
\cref{app:stat-bridge} identifies both data with vertex pieces. On
the relative interior of the shared segment, discard the finitely many $s$
that are the exact gradient of a $2$-cell of $D(f)$ or a vertex of the dual
subdivision; at a remaining $s$ the exposed face
$F(s)=\operatorname{arg\,max}(\langle\cdot,s\rangle-f)$ has dimension exactly
one (dimension two would force $s$ to be a cell gradient, dimension zero
would contradict $u,v\in F(s)$), so it is a $1$-face of $D(f)$ containing $u$
and $v$. A vertex contained in a face is a vertex \emph{of} that face (the
relative interiors of the faces of a polyhedron partition it: no vertex is
interior to a $1$-face), and a compact $1$-face has exactly two vertices;
hence $F(s)=[u,v]$, an edge of $D(f)$. Equality of the two pieces on the
segment gives the supporting line.
\end{proof}

Under $\mathbf J$ the conclusion sharpens: an edge $[u,v]$ of direction
$d\in R^{*}$ carries $f(u)=f(v)$ by (J2), so its breakline is
$\langle u-v,s\rangle=0$ --- \emph{through the origin}, of slope
$c_2^{\sigma_2}/c_1^{\sigma_1}$; an edge parallel to $e_1$ (resp.\ $e_2$)
gives a vertical (resp.\ horizontal) breakline. No viewport enters: a
breakline realized in any window is a positive-length coincidence in the
plane.

\begin{lemma}[The gadget satisfies $\mathbf J$]
\label{app:stat-gadget}
Under (i)--(iii), $G_t$ is convex, and $G_t$ and $\check G_t$ satisfy
$\mathbf J$.
\end{lemma}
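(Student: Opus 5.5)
The plan is to prove convexity first and then read the subdivision $D(G_t)$ quadrant by quadrant. For convexity, $L_i$ is convex under (ii), since $c_i^-\le c_i^+$ makes it the maximum of $c_i^-x$ and $c_i^+x$. Hence $L_1(x_1)+L_2(x_2)$ is convex, and composing with the convex non-decreasing $\psi_t$ of (i) keeps it convex. Adding the indicator of the compact box $B_1\times B_2$ gives a proper closed polyhedral convex $G_t$ with compact domain, so \cref{app:stat-bridge,app:stat-read} apply to it.

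For the subdivision, split the box by the two coordinate axes into at most four closed quadrant rectangles $Q_{\sigma_1\sigma_2}=B_1\times B_2\cap\{\sigma_1x_1\ge0,\sigma_2x_2\ge0\}$. On $Q_{\sigma_1\sigma_2}$ we have $G_t(x)=\psi_t(\lambda(x))$ with the linear form $\lambda(x)=c_1^{\sigma_1}x_1+c_2^{\sigma_2}x_2$. Let the breakpoints of $\psi_t$ be $b_1<\dots<b_m$. Then $G_t$ is affine on each cell $Q_{\sigma_1\sigma_2}\cap\{b_k\le\lambda\le b_{k+1}\}$. The common refinement of these cells across the four quadrants is a polyhedral subdivision on which $G_t$ is piecewise affine. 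Every edge of this refinement lies on one of three kinds of line: a coordinate axis $x_i=0$, a box side, or a level line $\{\lambda=b_k\}$ inside one quadrant. The first two kinds are parallel to $e_1$ or $e_2$. The third has direction $(c_2^{\sigma_2},-c_1^{\sigma_1})\in R^{*}$, and $G_t\equiv\psi_t(b_k)$ is constant along it.

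The maximal affinity regions of $G_t$ are unions of refinement cells, so the edges of $D(G_t)$ are unions of edges of the refinement. Each such edge is a union of refinement edges, so it keeps one of these directions. If its direction lies in $R^{*}$, it is contained in the lines $\{\lambda=b_k\}$, possibly from two adjacent quadrants that share the same $R^{*}$ direction. This is the point I expect to be the main obstacle, and I would handle it as follows. A single maximal edge is a segment with one direction, so it lies on a single line, and that line is a level line of the form $\lambda_{\sigma_1\sigma_2}=b_k$. The only subtle case is a segment crossing an axis. It passes between quadrants whose forms agree on the crossed axis, since both forms equal $c_j^{\sigma_j}x_j$ there. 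So the segment is still a level set of one form $\lambda$, and $G_t=\psi_t(b_k)$ on all of it. This gives (J1) and (J2) for $G_t$.

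A domain-boundary edge is always a box side, because $B_1\times B_2$ is a rectangle, so it is axis-parallel. On such an edge (J2) is vacuous unless the direction is in $R^{*}$, which never happens since $R^{*}$ is purged of axis-parallel members.

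For $\check G_t(x)=G_t(-x)$, reflection through the origin maps lines to lines with the same directions and preserves constancy along edges. The faces of $D(\check G_t)$ are the reflected faces of $D(G_t)$, so $\mathbf J$ transfers verbatim.
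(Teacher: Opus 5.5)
Your proposal is correct and follows the paper's own argument. Convexity comes from the convex non-decreasing $\psi_t$ composed with the convex $L_1+L_2$. The subdivision is read quadrant by quadrant into box sides, the axes $x_i=0$, and the level lines $c_1^{\sigma_1}x_1+c_2^{\sigma_2}x_2=b_k$, along which $G_t\equiv\psi_t(b_k)$; reflection then handles $\check G_t$. Your extra care about an edge crossing an axis, which the paper passes over, reaches the right conclusion, though the cleanest justification is simply continuity of $G_t$ on the box: each collinear level-line piece is constant and consecutive pieces share an endpoint, so the value cannot change along the edge.
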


\begin{proof}
Convexity is (i) composed with the convex $L_1+L_2$, plus a box indicator.
The affinity regions of $G_t$ on the compact box are cut by: the box
boundary and the splitting lines $x_i=0$ (directions $e_1,e_2$); and, inside
each quadrant $Q(\sigma_1,\sigma_2)$, the level lines
$c_1^{\sigma_1}x_1+c_2^{\sigma_2}x_2=v_k$ of the kinks of $\psi_t$, since
there $G_t=\psi_t\circ(\text{linear form})$. A level-line edge has direction
$(c_2^{\sigma_2},-c_1^{\sigma_1})$ and $G_t$ is \emph{constant} along it
(equal to $\psi_t(v_k)$): that is (J2) when the direction lies in $R^{*}$,
and when it degenerates onto an axis (the purged case $c_i=0$) it is absorbed
by (J1) with no constancy owed. No other edge exists, which is (J1). The
reflection $w\mapsto-w$ reverses edges without changing unoriented directions
or constancy, and one-sided or degenerate boxes only remove quadrants or drop
dimension.
\end{proof}

\begin{lemma}[Level restriction preserves $\mathbf J$]
\label{app:stat-restrict}
Let $f$ satisfy $\mathbf J$ and let $H=\{\ell_a\le\beta\}$ (or $\ge$) be a
half-plane with axis-parallel boundary. If
$\operatorname{dom}f\cap H=\emptyset$ the step is infeasible and nothing is
carried; otherwise $f+\iota_H$ satisfies $\mathbf J$.
\end{lemma}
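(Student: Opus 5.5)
The plan is to describe the face lattice of $\operatorname{epi}(f+\iota_H)$ directly in terms of the faces of $\operatorname{epi}f$ and the cut line $\Lambda=\{\ell_a=\beta\}$. Then check (J1) and (J2) separately on each kind of edge that survives or is created.

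First, the infeasible branch. If $\operatorname{dom}f\cap H=\emptyset$, the sum is identically $+\infty$, and the recursion reports infeasibility. So assume the intersection is non-empty. Then $g=f+\iota_H$ is proper, closed and polyhedral, and its domain $\operatorname{dom}f\cap H$ is compact. Hence $g$ is again in the setting of \cref{app:stat-bridge}, and $D(g)$ is well defined.

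Second, the structure of the restricted subdivision. On $H$, $g$ coincides with $f$. So every maximal affinity region of $g$ is contained in $R\cap H$ for some maximal region $R$ of $f$. Conversely, each non-empty $R\cap H$ lies inside one affine piece. A region of $g$ may be the union of several such pieces only if $f$ is affine across their common boundary, which contradicts maximality in $f$. Thus the $2$-cells of $D(g)$ are exactly the sets $R\cap H$ with non-empty interior. The same reasoning applies in lower dimension when $\operatorname{dom}g$ is degenerate.

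Every edge of $D(g)$ is therefore of one of two kinds:
\begin{itemize}
\item an edge $e\cap H$, where $e$ is an edge of $D(f)$ whose intersection with $H$ has positive length;
\item a segment lying on $\Lambda$, namely the intersection of $\Lambda$ with a $2$-cell of $D(f)$ (or with an edge of $D(f)$ that lies on $\Lambda$).
\end{itemize}
I would justify this dichotomy by the standard face description of a polyhedron intersected with a half-space: each face of $\operatorname{epi}f\cap(H\times\R)$ is either $F\cap(H\times\R)$ for a face $F$ of $\operatorname{epi}f$, or $F\cap(\Lambda\times\R)$. Projecting graph faces gives the two cases above.

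Third, the invariant. For edges of the first kind, the direction is inherited from $e$, so (J1) holds by hypothesis. If that direction lies in $R^{*}$, then $f$ is constant along $e$, hence $g$ is constant along the subsegment $e\cap H$, which gives (J2). For edges of the second kind, the direction is $e_{3-a}$, which is axis-parallel, so (J1) holds. These edges owe nothing under (J2): the members of $R^{*}$ are not axis-parallel under hypothesis (ii), so no direction in $R^{*}$ can occur here.

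The step I expect to need care is the second one, the completeness of the edge list. I have to make sure that restriction cannot create an edge of a new, non-axis direction. One way this could seem to happen is if a vertex of $D(f)$ lies outside $H$ and cutting it off merged regions. It does not: restriction never merges regions, it only removes area. Every new vertex lies on $\Lambda$, and every new edge joins two points of $\Lambda$, so it lies on $\Lambda$. I would argue this by taking an arbitrary $1$-face of $\operatorname{epi}g$ and noting that its affine hull either meets the relative interior of $H\times\R$, in which case it is a face of $\operatorname{epi}f$ cut down, or is contained in $\Lambda\times\R$. Domain-boundary edges are covered by the same dichotomy.
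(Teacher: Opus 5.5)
Your proposal is correct and takes the paper's route. Like the paper, you classify edges into two kinds: the slices $e\cap H$ of old edges, which inherit their direction and their constancy on a subsegment, and the cuts $C\cap\partial H$ of $2$-cells, which lie on the axis-parallel cut line and so owe no (J2), since $R^{*}$ has no axis-parallel members. The only difference is that you justify completeness of the edge list through the face description of $\operatorname{epi}f\cap(H\times\R)$. The paper states the same completeness more briefly as ``a restriction creates no new adjacency between kept cells.''
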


\begin{proof}
The affinity complex of $f+\iota_H$ is the refinement $\{F\cap H\}$. Its
edges are of two kinds: $e\cap H$ for $e$ an edge of $D(f)$ --- same
direction, constancy inherited on a subsegment --- and $C\cap\partial H$ for
$C$ a $2$-cell --- carried by the cut line, hence axis-parallel. Degenerate
domains (a segment when $E_i=0$, a point) leave only edges of those two
kinds.
\end{proof}

Two remarks locate \cref{prop:two-closed}(iii) inside this lemma. The clamp
band's gradient is the point of the segment $[g_K,g_D]$ whose $a$-coordinate
equals the bound: the ``new rational'' of \cref{prop:two-closed}(iii) is the
$b$-coordinate of a \emph{new primal vertex} --- a new position, never a new
direction. And
$(C_1\cap H)\cap(C_2\cap H)=(C_1\cap C_2)\cap H$: a restriction creates no
new adjacency between kept cells, so every new edge passes through the cut
line.

\begin{lemma}[Inf-convolution preserves $\mathbf J$]
\label{app:stat-infconv}
Let $f,g$ be proper closed polyhedral convex with compact domains, both
satisfying $\mathbf J$ with respect to the same direction set $R^{*}$. Then
$h=f\infconv g$ is exact, polyhedral, with compact domain, and satisfies
$\mathbf J$.
\end{lemma}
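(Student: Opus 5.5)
The plan follows the sketch given after \cref{thm:directions}: work with epigraphs. Since $f,g$ are proper closed polyhedral with compact domains, $\operatorname{epi}h=\operatorname{epi}f+\operatorname{epi}g$. The Minkowski sum of two polyhedra is a polyhedron, and the infimum over the compact set $\{u\in\operatorname{dom}f: x-u\in\operatorname{dom}g\}$ of a lower semicontinuous function is attained. So $h$ is exact, closed and polyhedral, and $\operatorname{dom}h=\operatorname{dom}f+\operatorname{dom}g$ is compact and non-empty. Equivalently, $\conj h=\conj f+\conj g$ is finite and polyhedral, and $h=(\conj f+\conj g)^*$ by \cite[Cor.~19.3.4]{Rockafellar1970}. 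The remaining work is the face structure of $D(h)$.

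The key step is the face-sum rule for Minkowski sums. For a direction $(s,-1)$, the face of $\operatorname{epi}h$ it exposes is the sum of the faces of $\operatorname{epi}f$ and $\operatorname{epi}g$ exposed by the same functional. Projecting gives $F_h(s)=F_f(s)+F_g(s)$, where $F_f(s)=\operatorname{arg\,max}(\langle\cdot,s\rangle-f)$; this is the subdifferential sum rule $\partial\conj h(s)=\partial\conj f(s)+\partial\conj g(s)$, which holds because both conjugates are finite everywhere. Every graph face of $\operatorname{epi}h$ has this form: a face of a polyhedron is exposed, and for a graph face the exposing functional can be normalised to $(s,-1)$.

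Now let $E=F_h(s)$ be an edge of $D(h)$. Since $E=F_f(s)+F_g(s)$ is one-dimensional, each summand is either a point or a segment parallel to $E$: a Minkowski sum of two faces has dimension at least the dimension of each summand, and two non-parallel segments would sum to a parallelogram. At least one summand, say $F_f(s)$, is therefore an edge of $D(f)$ parallel to $E$. By (J1) for $f$, the direction $d$ of $E$ lies in $\{e_1,e_2\}\cup R^{*}$, which gives (J1) for $h$.

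For (J2), take $d\in R^{*}$. Along $F_f(s)$ the function $f$ agrees with the affine map $x\mapsto\langle x,s\rangle-\conj f(s)$, so its directional slope along $d$ is $\langle s,d\rangle$. Constancy from (J2) for $f$ then forces $\langle s,d\rangle=0$. On $E$, $h$ likewise agrees with $\langle x,s\rangle-\conj h(s)$, whose slope along $d$ is the same $\langle s,d\rangle=0$. So $h$ is constant on $E$. The argument is symmetric when the segment comes from $g$; and if both summands are segments they are parallel, so each carries $d$.

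The main obstacle I expect is making the face-sum identification rigorous at degenerate prices. The difficulty is that an edge $E$ of $D(h)$ is exposed by a whole open set of prices, possibly including $s$ at which both $F_f(s)$ and $F_g(s)$ are segments, or at which one summand is a 2-cell. The latter is impossible for a one-dimensional sum. I would handle the rest by fixing one exposing $s$ and arguing from dimensions alone, as above, so no genericity is needed. I would also check that vertical faces of the epigraph never contribute: for the functional $(s,-1)$ the maximiser set lies in the graph, as shown in the proof of \cref{app:stat-bridge}. Degenerate domains (segments, points) only drop dimensions and leave the argument unchanged.
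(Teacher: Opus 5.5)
Your proof is correct and follows the paper's own route: you get $\operatorname{epi}h=\operatorname{epi}f+\operatorname{epi}g$ from attainment on a compact set, use face additivity at a common exposing vector $(s_0,-1)$, run a dimension count for (J1), and transport constancy through the slope identity $\partial_d f|_{F}=\langle s_0,d\rangle$ for (J2). The one step the paper spells out and you only assert is why a graph face has an exposing vector with negative vertical component: any exposing $w$ of an epigraph has $w_z\le0$, and $w_z=0$ would expose a face containing the vertical ray, so this is a one-line check rather than a gap.
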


\begin{proof}
\emph{Exactness and epigraphs.} For fixed $x$ the map
$y\mapsto f(y)+g(x-y)$ is continuous on the compact set
$\operatorname{dom}f\cap(x-\operatorname{dom}g)$, so the infimum is attained
whenever that set is non-empty; hence
$\operatorname{epi}h=\operatorname{epi}f+\operatorname{epi}g$ (one inclusion
is the definition, the other decomposes $(x,t)$ through an attaining $y$).

\emph{Face additivity.} For polyhedra $P,Q$ and a direction $w$ along which
both suprema are attained: if $x=p+q$ maximises $\langle w,\cdot\rangle$ over
$P+Q$ then $\langle w,p\rangle\le\sup_P$ and $\langle w,q\rangle\le\sup_Q$
force equality in each, so $F(P+Q,w)=F(P,w)+F(Q,w)$; conversely sums of
maximisers maximise. For $w=(s,-1)$ on our epigraphs the suprema are
attained, the domains being compact.

\emph{Every edge is exposed.} Let $E$ be an edge of $D(h)$ and
$\operatorname{lift}(E)$ its graph $1$-face. Every support vector $w$ of an
epigraph has $\langle w,e_z\rangle\le0$; a face is exposed by any $w$ in the
relative interior of its normal cone, and if that $w$ had $w_z=0$ the exposed
face would contain the vertical direction and not be a graph face. So some
exposing $w$ has $w_z<0$; normalise it to $(s_0,-1)$.

\emph{(J1).} By the two previous paragraphs,
$\operatorname{lift}(E)=\operatorname{lift}(E_f)+\operatorname{lift}(E_g)$
with $E_f,E_g$ the faces of $D(f),D(g)$ exposed at $s_0$. Since $\dim E=1$,
each has dimension at most one and at least one is an edge; if both are,
they are parallel. Hence every edge direction of $h$ is an edge direction of
$f$ or of $g$.

\emph{(J2).} Exposure at $s_0$ makes $\langle s_0,\cdot\rangle-x$ constant on
the exposed face of each object $x\in\{f,g,h\}$, so the directional slope of
$x$ along a direction $d$ of its face is $\langle s_0,d\rangle$. If $E$ has
direction $d\in R^{*}$, the operand that contributed an \emph{edge} of
direction $d$ is constant along it by its own (J2), so
$\langle s_0,d\rangle=0$, and $h$ is constant along $E$.
\end{proof}

This is the step that closes the worry of \cref{prop:two-closed}(iii): a
clamp band --- a new primal vertex with new coordinates --- may well become
co-active later against pieces born after it, but the breakline they realize
is an edge of $D(Z)$, whose directions and constancies are governed by
$\mathbf J$, not by the arithmetic of the gradients.

\paragraph{Proof of \cref{thm:directions}.}
Hypotheses (i)--(v) above are the standing hypotheses of the theorem as
stated in \cref{subsec:pair-theorem}.
The initial primal objects satisfy $\mathbf J$: a point ($\{\ell_0\}$), or
the level box (axis-parallel edges only, (J2) vacuous). All domains stay
compact and non-empty along the recursion (hypothesis (iii) and the
compactness argument above). By induction, $\mathbf J$ passes through each of
the four level half-planes (\cref{app:stat-restrict}) and through the
inf-convolution with the hour's gadget
(\cref{app:stat-gadget,app:stat-infconv}, with the common $R^{*}$ of
hypothesis (v); the backward reflection is absorbed by
\cref{app:stat-gadget}). \Cref{app:stat-read} --- through
\cref{app:stat-bridge}, hypothesis (iv) --- translates $\mathbf J$ into
(a)--(c) for the realized breaklines of the conjugates.\hfill$\square$

\begin{corollary}[Degenerate coefficients]
\label{app:stat-degenerate}
If some $c_i^{\sigma}=0$ (a zero-yield branch --- in practice a zero
discharge yield), the corresponding members of
$\{(c_2^{\sigma_2},-c_1^{\sigma_1})\}$ are axis-parallel and are purged from
$R^{*}$; \cref{thm:directions} holds verbatim with the remaining set, the
degenerate family being absorbed by classes (a)--(b).
\end{corollary}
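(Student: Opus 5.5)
The corollary follows by rerunning the induction of \cref{thm:directions} with the degenerate coefficients purged from $R^{*}$ at the outset. Nowhere did that induction use positivity of the coefficients except to guarantee that members of $R^{*}$ are not axis-parallel. So the work is to check, operator by operator, that the purged members only ever produce axis-parallel edges, which (J1) already admits and for which (J2) owes no constancy.

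\emph{The gadget.} In \cref{app:stat-gadget}, inside a quadrant $Q(\sigma_1,\sigma_2)$ where, say, $c_1^{\sigma_1}=0$, the linear form reduces to $c_2^{\sigma_2}x_2$. The level lines of the kinks of $\psi_t$ are then horizontal lines $x_2=v_k/c_2^{\sigma_2}$, with direction $e_1$, which (J1) admits. If both coefficients vanish, $G_t$ is constant on the quadrant and produces no level lines at all. The box boundary and the splitting lines are axis-parallel as before. Every non-purged level line keeps its direction in the reduced $R^{*}$ together with its constancy. Convexity is unaffected: $L_i$ with slope $0\le c_i^+$ is still convex, so hypothesis~(i) composed with $L_1+L_2$ remains convex, and the reflection argument is unchanged.

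\emph{Restriction and inf-convolution.} \Cref{app:stat-restrict} never referred to $R^{*}$ beyond inheriting directions, so it holds verbatim. \Cref{app:stat-infconv} is stated for an arbitrary common direction set, and the reduced $R^{*}$ is common to all operands by hypothesis~(v), so it applies unchanged. Compactness of the domains uses only compactness of the boxes (hypothesis~(iii)), not the values of the coefficients. The bridge and reading lemmas, \cref{app:stat-bridge,app:stat-read}, do not involve the coefficients at all. They translate $\mathbf J$ for the reduced set into the three classes: axis edges give vertical and horizontal breaklines, and the surviving ray directions give lines through the origin. There are now fewer than four rays, and the degenerate family is absorbed into classes~(a)--(b).

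The only point requiring care, and so the expected obstacle, is the gadget in the degenerate quadrants. One must confirm that no edge of an oblique direction arises there, and that the partially degenerate case does not produce a level line on which constancy fails. This holds because $G_t$ depends on a single coordinate in such a quadrant, so all of its edges are axis-parallel. Once this is settled, the induction of \cref{thm:directions} closes with the reduced $R^{*}$.
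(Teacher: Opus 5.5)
Your proposal is correct and follows the paper's own route. The paper's justification sits in the proof of \cref{app:stat-gadget}: a level-line direction $(c_2^{\sigma_2},-c_1^{\sigma_1})$ that degenerates onto an axis is absorbed by (J1) with no constancy owed, and the restriction, inf-convolution and reading lemmas then run verbatim with the purged $R^{*}$. You also make explicit why purging is necessary: it keeps (J2) from being demanded on the axis-parallel cut edges and box edges. The paper leaves that point implicit.
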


\begin{remark}[Inflows]
\label{app:stat-inflows}
With inflows, $\ell_t=\ell_{t-1}+a_t+x_t$, the state translation moves no
breakline at all: on the primal side it changes neither edge directions nor
value differences along edges, and on the conjugate side it maps pieces
$(g,c)$ to $(g+a_t,c)$, which leaves every supporting line
$\{\langle g_i-g_j,s\rangle=c_j-c_i\}$ unchanged. \Cref{thm:directions}
holds unchanged.
\end{remark}

What the theorem leaves open is exactly what
\cref{subsec:two-store-conjugate} measures: the vertical and horizontal
\emph{positions} are difference quotients of $Z$ along axis-parallel primal
edges, the clamp does create new primal vertices, and the position alphabet
therefore remains a per-instance measured quantity ---
\cref{prop:two-closed}(iii) survives as a statement about positions, never
about directions.

The corroborating audit quoted in \cref{rem:alphabet-artefact} is an
exact-arithmetic search over the recursion: rational predicates throughout,
two campaigns of $1\,180$ and $1\,198$ instances run in both sweeps, with
adversarial families including clamp bands kept alive across flat hours and
met by later menus, five audit points per hour, and a per-operator
differential audit. It classified \statEdgeAuditTotal{} million realized
edges and found none outside families (a)--(c), with every through-origin ray
carrying an exactly zero offset; the instruments and their verdicts are
\texttt{stationarity\_exact/} in the \texttt{prototypes} repository
(\cref{sec:package}).

\section{The $d$-store block: directional stationarity under common
coefficients}
\label{app:statd}

This appendix proves \cref{thm:walls-d} and \cref{cor:arrangement}, stated
in \cref{subsec:pair-ddim}: it supplies the formal setting, the lattice
lemma that replaces the four-element direction set of \cref{app:stat}, the
reading and preservation lemmas in dimension $d$, and the induction. What is
\emph{not} bounded here, deliberately and exactly as in two dimensions, are
the wall \emph{positions}: they remain the per-instance measured alphabet.

\paragraph{Setting and standing hypotheses.}
$d$ storages share one node; storage $i$ has reservoir-side control $x_i$ in
a box $B_{i}$ (possibly per hour) and level $\ell_i\in[0,E_i]$. With the
load-map conventions of \cref{app:stat} and one \emph{common} coefficient,
the hourly gadget is
\begin{equation}
G_t(x)\;=\;\psi_t\Bigl(c\,\textstyle\sum_i x_i\Bigr)
\;+\;\iota_{B_{1}\times\cdots\times B_{d}}(x)
\;=\;\tilde\psi_t\Bigl(\textstyle\sum_i x_i\Bigr)\;+\;\iota_{\mathrm{box}}(x),
\qquad \tilde\psi_t(z):=\psi_t(c\,z),
\label{app:statd-eq-gadget}
\end{equation}
and the hourly step is the $d$-dimensional form of \eqref{eq:bivariate-step},
\begin{equation}
W^\star_{t+1}\;=\;\bigl(W^\star_t\ \text{clamped on each axis into }
[0,E_i]\bigr)\;+\;G^\star_t,
\label{app:statd-eq-step}
\end{equation}
i.e.\ $2d$ one-sided directional gradient clamps followed by one addition,
each the exact conjugate of the primal step
$Z'=(Z+\iota_{\prod_i[0,E_i]})\infconv\check G_t$ (the box support function
is separable axis by axis; Rockafellar, Thm.~16.4 and Cor.~19.3.4, the
non-emptiness condition of the latter being the infeasibility branch of
\cref{app:statd-restrict}). Throughout:
\begin{enumerate}
\item[(i)] each $\tilde\psi_t:\R\to\R$ is convex, piecewise linear and
  finite. \emph{No monotonicity is assumed}: hypothesis~(i) of
  \cref{app:stat} was needed there because a genuinely kinked load map makes
  the composition convex only for non-increasing curves; under~(ii) below
  the load maps are linear and the composition is convex unconditionally;
\item[(ii)] all coefficients equal one common constant: $c_i^{\pm}=c\neq0$
  for every storage and both signs --- the lossless devices of
  \cref{subsec:coupled-model} realise this with $c=1$. With distinct
  coefficients no closure theorem of this kind exists: the direction
  \emph{algebra} acquires a proven infinite multiplicative orbit from
  $d=3$ on (\texttt{stationarity\_exact/ROOT\_VERDICT.md}, \cref{sec:package})
  --- whether
  the \emph{realised} directions remain finite is open, the same
  realised-versus-closure distinction as in two dimensions;
\item[(iii)] the power boxes are non-empty and compact, the level boxes are
  $[0,E_i]$, and every statement is conditional on feasibility. All primal
  domains along the recursion are compact and non-empty: the initial objects
  are a point or the level box, a half-space restriction preserves
  compactness (non-emptiness is the non-infeasible branch), and an
  inf-convolution adds a compact box;
\item[(iv)] the objects are the exact conjugates of \eqref{app:statd-eq-step};
  a \emph{realized wall} is the $d$-dimensional form of
  \cref{app:stat-realized}: a set of positive $(d{-}1)$-dimensional measure
  common to the boundaries of two maximal affine regions --- whose gradients
  are automatically distinct, by the same convexity argument --- so the
  supporting hyperplane $\{\langle g_i-g_j,s\rangle=c_j-c_i\}$ is well
  defined;
\item[(v)] the coefficients are fixed across the instance.
\end{enumerate}

\paragraph{The partial-partition lattice, and the block invariant.}
For $f$ proper closed polyhedral convex with compact domain, $D(f)$ is the
projected lattice of graph faces of $\operatorname{epi}f$
(\cref{app:stat}); on a face $F$, $f$ is affine and its slope
$\partial_\delta f|_F$ is defined for every $\delta$ in the tangent space
$\mathrm{or}(F)$ of $F$. Let
\[
\mathbb L_d \;=\; \bigl\{\,S\subseteq\R^d \text{ linear} \;:\;
S^{\perp}\ \text{has a basis of }0/1\text{ vectors with pairwise disjoint
supports}\,\bigr\}.
\]
Writing the basis supports as \emph{zeros} (singletons) and \emph{blocks}
(size $\ge2$), $S=\{x: x_z=0\ (z\in Z),\ \sum_{i\in B_m}x_i=0\ (m)\}$; the
uncovered coordinates are \emph{free}. Say that $f$ satisfies the invariant
$\mathbf J_d$ when \emph{($\mathrm J1$)} the orientation of every face of
$D(f)$ belongs to $\mathbb L_d$, and \emph{($\mathrm J2$)} on every face
$F$, for every pair $\{j,k\}$ contained in one \emph{block} of
$\mathrm{or}(F)$: $\partial_{e_j-e_k}f|_F=0$ --- the gradient of $f$ is
constant within each block. (Blocks are coalitions of storages whose levels
the recursion has tied; ($\mathrm J2$) says water values are equal inside a
coalition. Pairs of \emph{free} coordinates are exempt --- the distinction
that makes the invariant both true and sufficient, see
\cref{app:statd-restrict}.)

\begin{lemma}[The lattice]
\label{app:statd-lattice}
(a) The disjoint $0/1$ basis of $S^{\perp}$, when it exists, is unique, so
zeros, blocks and free coordinates are canonical. (b) $\mathbb L_d$ is
stable under $S\mapsto S\cap e_a^{\perp}$; a block containing $a$
\emph{shrinks} to $B\setminus\{a\}$, all other structure unchanged. (c)
$\mathbb L_d$ is stable under sums, and every pair $\{j,k\}$ in a block of
$S_1+S_2$ is joined by a chain of pairs each contained in a block of $S_1$
or of $S_2$. (d) $\mathbb L_d$ is finite. (e) Its one-dimensional members
are exactly the axes $\langle e_i\rangle$ and the roots
$\langle e_i-e_j\rangle$; in the canonical structure of
$\langle e_i-e_j\rangle$ the pair $\{i,j\}$ is a \emph{block}.
\end{lemma}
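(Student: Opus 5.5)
The plan is to encode every admissible subspace by a \emph{partial partition} of $\{1,\dots,d\}$ and read all five items off that encoding. For $S\in\mathbb L_d$ with disjoint $0/1$ basis $\{\mathbf 1_{P}\}$ of $S^{\perp}$, the supports $P$ are the zeros and blocks and the uncovered coordinates are free. Throughout I will use the dual description
\[
S^{\perp}=\{y:\ y_i=0 \text{ for } i \text{ free},\ y \text{ constant on each block}\},
\]
with no constraint on zeros. It says that $S^{\perp}$ is the set of vectors supported on the covered coordinates and constant on each part.

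For (a), I will show that the parts are intrinsic: they are exactly the inclusion-minimal supports of nonzero vectors of $S^{\perp}$. Indeed, any $y=\sum_m\lambda_m\mathbf 1_{P_m}$ has support equal to the union of the parts with $\lambda_m\neq0$, so the minimal supports are the single parts. Uniqueness follows.

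For (b), we have $(S\cap e_a^{\perp})^{\perp}=S^{\perp}+\langle e_a\rangle$, and I will treat three cases.
\begin{itemize}
\item If $a$ is free, the structure gains the zero $\{a\}$.
\item If $a$ is a zero, nothing changes.
\item If $a$ lies in a block $B$, then $\langle\mathbf 1_B,e_a\rangle=\langle\mathbf 1_{B\setminus\{a\}},e_a\rangle$. So $a$ becomes a zero and $B\setminus\{a\}$ stays a block, or becomes a zero if it is a singleton. This is the announced shrinking, with all other parts untouched.
\end{itemize}

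For (c), which I expect to be the only step needing care, I will use $(S_1+S_2)^{\perp}=S_1^{\perp}\cap S_2^{\perp}$ and the dual description.
\begin{itemize}
\item A vector $y$ lies in the intersection iff it vanishes on coordinates free in $S_1$ or in $S_2$ and is constant on every block of $S_1$ and of $S_2$.
\item Form the graph on $\{1,\dots,d\}$ whose edges are the pairs lying in a common block of $S_1$ or of $S_2$, and call a connected component \emph{dead} if it contains a coordinate free in either space.
\item Constancy propagates along edges, so $y$ is constant on each component and zero on dead components.
\item Conversely, any vector constant on live components and zero elsewhere satisfies all the constraints.
\end{itemize}
Hence $S_1^{\perp}\cap S_2^{\perp}$ has the disjoint $0/1$ basis of indicators of live components, so $S_1+S_2\in\mathbb L_d$. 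Its blocks are the live components of size $\ge2$, its zeros the live singletons, and its free coordinates the dead components. The obstacle is exactly this dead-component bookkeeping: a naive ``join the partitions'' argument would miss that freeness propagates through blocks. The chain property is then immediate, since two coordinates of one block lie in one connected component and are therefore joined by a path of edges, each inside a block of $S_1$ or $S_2$.

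Item (d) follows because the map from $S$ to its partial partition is injective by (a), and there are finitely many partial partitions. For (e), compute $\dim S=d-\#\text{parts}=\#\text{free}+\sum_{\text{blocks}}(|B|-1)$. This equals $1$ in exactly two cases: one free coordinate with all others zeros, giving $S=\langle e_i\rangle$; or no free coordinate and a single block $\{i,j\}$ of size two with the rest zeros, giving $S=\langle e_i-e_j\rangle$. In the second case $\{i,j\}$ is a block of the canonical structure by (a).
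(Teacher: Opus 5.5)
Your proposal is correct and follows the paper's proof essentially step for step. Four pieces are exactly the paper's arguments: the dual description of $S^{\perp}$; the rebasing $\{\mathbf 1_B,e_a\}\to\{e_a,\mathbf 1_{B\setminus\{a\}}\}$ in (b); the transitive closure of the two block relations, with every class touching a free coordinate of either operand forced to zero, in (c); and the count of disjoint constraints in (e). The differences are cosmetic. In (a) you identify the parts as the inclusion-minimal supports of $S^{\perp}$, where the paper shows that two such bases refine each other. You also make explicit two cases the paper leaves tacit: a coordinate that is already a zero in (b), and a size-two block collapsing to a zero under the cut.
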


\begin{proof}
(a) If $G,G'$ are two such bases, every $1_B$ with $B\in G$ lies in
$\operatorname{span}G'$, hence is coordinatewise a disjoint union of blocks
of $G'$; the two partitions refine each other and coincide. (b)
$(S\cap e_a^{\perp})^{\perp}=S^{\perp}+\langle e_a\rangle$: if $a$ is free,
add the singleton; if $a\in B$, rebase $\{1_B,e_a\}$ as
$\{e_a,1_{B\setminus\{a\}}\}$. (c) $(S_1+S_2)^{\perp}=S_1^{\perp}\cap
S_2^{\perp}$; $v\in S^{\perp}$ iff $v$ is constant on each block, zero on
the free coordinates and arbitrary on the zeros, so the intersection
consists of the vectors constant on each class of the transitive closure of
the two block relations, zero on every class touching a free coordinate of
either operand, arbitrary on common zeros: a basis is given by the class
indicators and the common-zero singletons --- $0/1$, disjoint --- and every
pair in a surviving class is chained through operand blocks. (d) Members
correspond to families of disjoint blocks. (e) $\dim S=1$ forces $d-1$
disjoint constraints: either $d-1$ singletons and one free coordinate
($S=\langle e_i\rangle$), or $d-2$ singletons and one block of size two
($S=\langle e_i-e_j\rangle$).
\end{proof}

\begin{lemma}[Bridge and reading in dimension $d$]
\label{app:statd-read}
Let $f$ be proper closed polyhedral convex with compact domain. (a) Every
piece of an exact max-representation of $\conj{f}$ that attains the max on a
set of positive $d$-dimensional measure coincides with $(w,-f(w))$ for a
vertex $w$ of $D(f)$. (b) If two maximal affine regions of $\conj{f}$ share
a realized wall, their data are vertex pieces $(u,-f(u))$, $(v,-f(v))$,
$u\neq v$, the segment $[u,v]$ is an \emph{edge} of $D(f)$, and the wall's
hyperplane is $\{\langle u-v,s\rangle=f(u)-f(v)\}$.
\end{lemma}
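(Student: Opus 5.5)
The plan is to transcribe the proofs of \cref{app:stat-bridge} and \cref{app:stat-read} into $\R^d$. Only two ingredients change. First, a realized wall is now a $(d{-}1)$-dimensional coincidence rather than a segment. Second, faces of $D(f)$ may have any dimension, so the dimension count that isolated an edge must be redone.

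For (a), I would first establish the vertex formula $\conj{f}(s)=\max_w[\langle w,s\rangle-f(w)]$, with $w$ ranging over the vertices of $D(f)$. The argument is the one in \cref{app:stat-bridge}, verbatim. The supremum over the compact domain is attained. Its attainment set lifts to the face of $\operatorname{epi}f$ exposed by $(s,-1)$. That face is a graph face, because points above the graph are strictly dominated. Since $\operatorname{epi}f$ is pointed (its recession cone is the vertical ray only), the face contains a vertex, and that vertex projects to a vertex of $D(f)$. There are finitely many vertices. I would cover the positive-measure set on which the given piece attains the max by the finitely many closed sets where a single vertex piece attains. One of these meets it in positive $d$-measure, and two affine functions that agree on a set of positive measure coincide. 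This gives $(g,c)=(w,-f(w))$.

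For (b), I would proceed in four steps.
\begin{enumerate}
\item Each of the two maximal regions has positive $d$-measure and a distinct gradient, by the convexity argument of hypothesis (iv). Part (a) therefore identifies their data as vertex pieces $(u,-f(u))$ and $(v,-f(v))$ with $u\ne v$.
\item The realized wall $\Sigma$ has positive $(d{-}1)$-measure. The dual subdivision of $\R^d$ induced by the vertex pieces is a finite polyhedral complex. Its cells of codimension $\ge2$, and the finitely many $(d{-}1)$-cells other than the ones separating the two regions, meet $\Sigma$ in a set of $(d{-}1)$-measure zero.
\item I would therefore pick $s$ in the relative interior of $\Sigma$, off every cell of codimension $\ge2$. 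At such an $s$ exactly two vertex pieces are active, namely $u$ and $v$. The exposed face $F(s)=\argminop_x(f(x)-\langle s,x\rangle)$ is then a face of $D(f)$ whose vertex set is exactly $\{u,v\}$. Any further vertex of $F(s)$ would be an additional active piece at $s$, and the vertex formula shows the active set at $s$ is precisely the vertex set of $F(s)$.
\item A compact face with exactly two vertices is the segment $[u,v]$, so this segment is an edge of $D(f)$. Equating the two affine pieces on $\Sigma$ gives the hyperplane $\{\langle u-v,s\rangle=f(u)-f(v)\}$.
\end{enumerate}

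The main obstacle is the genericity step in (b). One must make rigorous that generic points of a positive-$(d{-}1)$-measure wall have an exposed face with exactly two vertices. The two-dimensional proof sidestepped this by a direct dimension count, which is unavailable once faces may have dimension up to $d$. I would argue through normal cones. The dual cell on which the active vertex set is exactly $V(F)$ is the relative interior of the normal cone of the lift of $F$, intersected with $\{w_z=-1\}$. That cell has dimension $d-\dim F$. A cell of dimension $d-1$ therefore forces $\dim F=1$. Since only finitely many cells are involved, $\Sigma$ must meet some $(d{-}1)$-dimensional cell in positive measure, and that cell yields the edge.
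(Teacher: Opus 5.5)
Your proposal is correct and follows the paper's proof. Part (a) is the two-dimensional bridge argument with area replaced by $d$-dimensional measure. In (b) your normal-cone count is exactly the paper's stratification step: the prices exposing a $k$-face form the relative interior of its normal cone sliced by $w_z=-1$, of dimension $d-k$, so a generic wall point exposes a one-dimensional face, which must be $[u,v]$. The only difference is that you phrase the last step as ``active vertex set equals the vertex set of $F(s)$'', where the paper says ``a compact $1$-face containing $u$ and $v$ has exactly these two vertices''; this is cosmetic.
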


\begin{proof}
(a) is \cref{app:stat-bridge} verbatim with area replaced by
$d$-dimensional Lebesgue measure. (b) transposes \cref{app:stat-read}; the
one argument that changes is the genericity exclusion, which becomes a
stratification count: the set of $s$ exposing a given $k$-face of $D(f)$ has
dimension $d-k$ (relative interior of its normal cone, sliced by
$w_z=-1$), so the $s$ exposing faces of dimension $\ge2$ form a set of
dimension $\le d-2$ --- of $(d{-}1)$-measure zero in the wall. At a generic
$s$ of the wall the exposed face is therefore of dimension exactly one,
contains $u$ and $v$, and the vacancy argument of \cref{app:stat-read}
(relative interiors of faces partition the polyhedron; a compact $1$-face
has exactly two vertices) applies verbatim: $F(s)=[u,v]$.
\end{proof}

Under $\mathbf J_d$ the conclusion sharpens exactly as in two dimensions:
an edge is a face, its orientation is one-dimensional, hence an axis or a
root by \cref{app:statd-lattice}(e); in the root case $\{i,j\}$ is a block
of the edge's own orientation, so ($\mathrm J2$) forces $f(u)=f(v)$ and the
wall is $\{s_i=s_j\}$ --- \emph{through the origin, exactly}. No viewport
enters.

\begin{lemma}[The gadget satisfies $\mathbf J_d$]
\label{app:statd-gadget}
Under (i)--(iii), $G_t$ is convex and $G_t$, $\check G_t$ satisfy
$\mathbf J_d$.
\end{lemma}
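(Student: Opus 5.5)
Convexity is immediate: $\tilde\psi_t$ is convex and finite, $x\mapsto\sum_i x_i$ is linear, so $\tilde\psi_t(\sum_ix_i)$ is convex with no monotonicity needed, and adding the indicator of the compact box keeps $G_t$ proper closed polyhedral convex with compact domain. The plan for $\mathbf J_d$ is to describe the face lattice $D(G_t)$ explicitly, as in \cref{app:stat-gadget}, and check ($\mathrm J1$) and ($\mathrm J2$) face by face. The reflection $\check G_t(x)=G_t(-x)$ then comes for free: $x\mapsto-x$ preserves every linear subspace, so orientations in $\mathbb L_d$ are preserved, and it preserves vanishing of directional slopes.

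\emph{Faces.} Let $v_1<\dots<v_K$ be the kinks of $\tilde\psi_t$ and set $\sigma(x)=\sum_ix_i$. The affinity cells of $G_t$ are the nonempty sets $P_k=\mathrm{box}\cap\{v_k\le\sigma\le v_{k+1}\}$. Each $P_k$ is a polytope cut out by axis-parallel constraints $x_i=\underline b_i$ or $x_i=\overline b_i$ and by at most two level constraints $\sigma=v_k$, $\sigma=v_{k+1}$. Every face $F$ of $D(G_t)$ is a face of some $P_k$. Its affine hull is therefore obtained by fixing a set $Z$ of coordinates at box bounds and possibly imposing one level equation $\sigma=v$; two distinct level equations cannot hold simultaneously. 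The orientation is then either $\{x_z=0,\ z\in Z\}$, which is admissible with zeros only, or $\{x_z=0\ (z\in Z),\ \sum_ix_i=0\}$. In the second case the last constraint restricted to the non-fixed coordinates reads $\sum_{i\notin Z}x_i=0$. Its orthogonal complement is spanned by $\{e_z\}_{z\in Z}$ and $1_{[d]\setminus Z}$, which is $0/1$ with disjoint supports. So the orientation lies in $\mathbb L_d$. When $|[d]\setminus Z|=1$ the block degenerates to a zero, and when it is empty the level equation is redundant. This gives ($\mathrm J1$).

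\emph{Blocks and ($\mathrm J2$).} By \cref{app:statd-lattice}(a) the canonical structure is unique. In the first case there are no blocks, so ($\mathrm J2$) is vacuous. In the second case the only block is $B=[d]\setminus Z$ (when $|B|\ge2$), and for $j,k\in B$ the direction $e_j-e_k$ lies in the face orientation. On $F$ we have $\sigma\equiv v$, so $G_t\equiv\tilde\psi_t(v)$ on $F$, and $\partial_{e_j-e_k}G_t|_F=0$. This is exactly the constancy that \cref{app:stat-gadget} used along level lines, lifted to $(d{-}1)$-dimensional level slabs.

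\emph{Main obstacle.} The step needing care is the claim that every face of $D(G_t)$ has an affine hull of the stated form, with \emph{no} other tangent subspaces. The difficulty comes from degenerate coincidences: a kink level passing exactly through a box vertex, several box faces meeting a level hyperplane, or faces lying on the common boundary of two cells $P_k$, $P_{k+1}$. I would handle it through the standard fact that faces of a polyhedron are obtained by turning a subset of its defining inequalities into equalities. The defining inequalities here are the $2d$ box bounds and the two level bounds, so the affine hull of any face is an intersection of such hyperplanes. Since the two level hyperplanes are parallel and disjoint, at most one of them appears. Faces of $D(G_t)$ on cell interfaces are faces of both adjacent cells, so the same description applies. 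With this in hand, no genericity is needed, which matches the theorem's claim.
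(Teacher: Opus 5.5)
Your proposal is correct and is essentially the paper's proof: the faces of $D(G_t)$ are box faces intersected with a slab or a kink-level hyperplane $\{\sum_i x_i=v_k\}$, so their orientations are coordinate subspaces, possibly intersected with $\mathbf 1^{\perp}$, and both kinds lie in $\mathbb L_d$; ($\mathrm J2$) holds because $G_t$ depends on $x$ only through $\sum_i x_i$ (your constancy of $G_t$ on a level face is the same fact), and the reflection changes nothing. Your implicit-equalities argument for the affine hulls, which covers the degenerate coincidences, is a more careful version of a step the paper states in one line.
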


\begin{proof}
Convexity is (i) composed with a linear form, plus a box indicator ---
no monotonicity. Because the coefficients agree on both signs, the load maps
are linear and \emph{no splitting at $x_i=0$ exists}: the complex of $G_t$
on the compact box is cut only by the box facets and by the level
hyperplanes $\{\sum_i x_i=v_k\}$ of the kinks of $\tilde\psi_t$. Its faces
are (box face) $\cap$ (slab or level hyperplane), of orientations $V_I$
(free coordinates $I$; no block, ($\mathrm J2$) vacuous) or
$V_I\cap\mathbf 1^{\perp}$ (one block $I$); both lie in $\mathbb L_d$. For
a block pair $\{j,k\}\subseteq I$: $G_t$ depends on $x$ only through
$\sum_i x_i$, so $\partial_{e_j-e_k}G_t=0$ --- indeed its slope along
\emph{every} direction of $\mathbf 1^{\perp}$ vanishes. Reflection and
one-sided or degenerate boxes change nothing.
\end{proof}

\begin{lemma}[Level restriction preserves $\mathbf J_d$]
\label{app:statd-restrict}
Let $f$ satisfy $\mathbf J_d$ and $H=\{\ell_a\le\beta\}$ (or $\ge$). If
$\operatorname{dom}f\cap H=\emptyset$ the step is infeasible and nothing is
carried; otherwise $f+\iota_H$ satisfies $\mathbf J_d$.
\end{lemma}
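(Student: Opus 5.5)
The plan follows the two-dimensional \cref{app:stat-restrict}, with the lattice bookkeeping of \cref{app:statd-lattice} in place of the four-element direction set. The affinity complex of $f+\iota_H$ is the refinement $\{F\cap H: F\in D(f)\}$: on $H$ the function agrees with $f$, and off $H$ it is $+\infty$. Its graph faces therefore come in two kinds. The first kind is $F\cap H$ with $F$ meeting the open half-space, whose orientation is $\mathrm{or}(F)$. The second kind is $F\cap\partial H$, whose orientation is $\mathrm{or}(F)\cap e_a^{\perp}$, or $\mathrm{or}(F)$ itself when $F\subseteq\partial H$. The non-emptiness proviso is the infeasibility branch, and compactness of the domain is inherited.

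For ($\mathrm J1$), faces of the first kind keep their orientation, which lies in $\mathbb L_d$ by hypothesis. For faces of the second kind, \cref{app:statd-lattice}(b) gives $\mathrm{or}(F)\cap e_a^{\perp}\in\mathbb L_d$. Degenerate domains, for example $E_a=0$ or a point, produce only faces of these two kinds.

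For ($\mathrm J2$), first treat a face of the first kind. It has the same orientation and the same canonical block structure as $F$ by \cref{app:statd-lattice}(a), and $f+\iota_H$ coincides with $f$ on it, so block slopes vanish by inheritance. For a cut face $F\cap\partial H$ with $a$ not in a block of $\mathrm{or}(F)$, \cref{app:statd-lattice}(b) says the blocks are unchanged. The directions $e_j-e_k$ for $j,k$ in such a block lie in $\mathrm{or}(F)\cap e_a^{\perp}$, and the slope there is computed from the restriction of the affine function $f|_F$. That slope is zero by the hypothesis on $F$. If instead $a\in B$, the block shrinks to $B\setminus\{a\}$. Any pair $\{j,k\}\subseteq B\setminus\{a\}$ is still a pair in $B$, and $e_j-e_k\perp e_a$, so the same inherited-slope argument gives $\partial_{e_j-e_k}(f+\iota_H)=0$ on the cut face.

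The step I expect to be delicate is this: a cut face must not acquire \emph{new} blocks whose pairs were free in $F$. That is exactly why free pairs are exempt from ($\mathrm J2$). I will check it through the explicit rebasing of \cref{app:statd-lattice}(b). If $a$ is free, the rebasing adds a singleton. If $a\in B$, it replaces $1_B$ by $\{e_a,1_{B\setminus\{a\}}\}$. In both cases no two previously distinct classes merge, so every block of the new orientation is contained in a block of the old one, and no constancy is owed that $f$ does not already supply.

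Finally, I will note that a slope along a direction of $\mathrm{or}(F\cap\partial H)$ is well defined and equals the slope of $f|_F$, because $f+\iota_H=f$ on $F\cap\partial H$. Faces of $D(f)$ contained in $\partial H$ need no separate treatment: their orientation is already in $e_a^{\perp}$, so \cref{app:statd-lattice}(b) returns the orientation unchanged.
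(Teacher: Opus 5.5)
Your proposal is correct and follows the paper's proof. Both split the refined complex into slices $F\cap H$ and cut faces $F\cap\partial H$, obtain ($\mathrm J1$) from \cref{app:statd-lattice}(b), and obtain ($\mathrm J2$) because every block of $\mathrm{or}(F)\cap e_a^{\perp}$ is a block of $\mathrm{or}(F)$, possibly shrunk by $a$, so its pairs inherit zero slope from $f|_F$. The only looseness is that $\mathrm{or}(F\cap\partial H)=\mathrm{or}(F)\cap e_a^{\perp}$ requires the hyperplane to traverse $F$, as the paper specifies; a non-traversing contact is itself a face of $D(f)$ lying in $\partial H$, which your last paragraph already covers.
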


\begin{proof}
The refined complex consists of the slices $F\cap H$ (orientation and
restricted slopes inherited) and the cut faces $F\cap\partial H$ for the
faces $F$ that the hyperplane traverses. A cut face has orientation
$\mathrm{or}(F)\cap e_a^{\perp}\in\mathbb L_d$
(\cref{app:statd-lattice}(b)), whose blocks are those of $\mathrm{or}(F)$,
unchanged or with $B\to B\setminus\{a\}$; every pair of a new block was a
pair of an old one, and the slope of $f$ along it, restricted from $F$, is
zero by ($\mathrm J2$) of $f$.
\end{proof}

Two remarks locate the $d$-dimensional difficulty. In two dimensions a cut
of a $2$-cell is a segment \emph{of the cut line}: new edges are
axis-parallel by a dimension count, and no constancy is owed
(\cref{app:stat-restrict}). From $d=3$ on this fails: cutting a $2$-face of
orientation $\operatorname{span}(e_i,e_j-e_k)$ at $a=i$, or
$\operatorname{span}(e_i-e_j,e_k-e_l)$ at $a\in\{i,j\}$, produces an edge of
\emph{root} direction, whose constancy is exactly the block-shrink
inheritance above. And a $2$-face of orientation
$\operatorname{span}(e_i,e_j)$ --- a \emph{free} pair, for which
($\mathrm J2$) promises nothing --- never produces a root edge: for
$a\notin\{i,j\}$ the plane lies inside $e_a^{\perp}$ (the cut keeps the face
whole or drops it), and for $a\in\{i,j\}$ the cut yields an axis edge. This
is why the invariant distinguishes blocks from free pairs.

\begin{lemma}[Inf-convolution preserves $\mathbf J_d$]
\label{app:statd-infconv}
Let $f,g$ satisfy $\mathbf J_d$, with compact domains. Then
$h=f\infconv g$ is exact, polyhedral, with compact domain, and satisfies
$\mathbf J_d$.
\end{lemma}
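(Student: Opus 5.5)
The proof follows \cref{app:stat-infconv} step for step. The only change is that the four-element direction set $R^{*}$ is replaced by the lattice $\mathbb L_d$, and (J2) is transported by the chain property of \cref{app:statd-lattice}(c).

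\emph{Exactness, epigraph sum, face additivity.} For fixed $x$, the minimand $y\mapsto f(y)+g(x-y)$ is continuous on the compact set $\operatorname{dom}f\cap(x-\operatorname{dom}g)$, so the infimum is attained. Hence $\operatorname{epi}h=\operatorname{epi}f+\operatorname{epi}g$, $h$ is polyhedral, and $\operatorname{dom}h=\operatorname{dom}f+\operatorname{dom}g$ is compact. The face-additivity argument for Minkowski sums of polyhedra is dimension-free: $F(P+Q,w)=F(P,w)+F(Q,w)$ whenever both suprema are attained, which holds for $w=(s,-1)$ because the domains are compact. The exposure argument is also unchanged. Let $F$ be any face of $D(h)$, of any dimension, not only an edge. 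Its graph lift is exposed by some $w$ in the relative interior of its normal cone. That $w$ has $w_z<0$, since otherwise the exposed face would contain the vertical direction. After normalisation, $w=(s_0,-1)$.

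\emph{(J1).} Write $\operatorname{lift}(F)=\operatorname{lift}(F_f)+\operatorname{lift}(F_g)$, where $F_f,F_g$ are the faces of $D(f),D(g)$ exposed at $s_0$. The affine hull of a Minkowski sum is the sum of the affine hulls, and projection commutes with sums. Therefore $\mathrm{or}(F)=\mathrm{or}(F_f)+\mathrm{or}(F_g)$. Both summands lie in $\mathbb L_d$ by ($\mathrm J1$) for $f$ and $g$, so $\mathrm{or}(F)\in\mathbb L_d$ by \cref{app:statd-lattice}(c).

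\emph{(J2).} Exposure at $s_0$ makes $\langle s_0,\cdot\rangle-x$ constant on the exposed face of each $x\in\{f,g,h\}$. Hence the slope of $x$ along any $\delta\in\mathrm{or}$ of that face equals $\langle s_0,\delta\rangle$.

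Take a pair $\{j,k\}$ in a block of $\mathrm{or}(F)$. By \cref{app:statd-lattice}(c) there is a chain $j=i_0,i_1,\dots,i_r=k$ in which each consecutive pair lies in a block of $\mathrm{or}(F_f)$ or of $\mathrm{or}(F_g)$. If $\{i_m,i_{m+1}\}$ lies in a block $B$ of an admissible subspace $S$, then $e_{i_m}-e_{i_{m+1}}\in S$: the coordinate sum over $B$ vanishes and the direction has no other support. So that difference is a tangent direction of the contributing operand's face. Its slope there vanishes by that operand's ($\mathrm J2$), which gives $\langle s_0,e_{i_m}-e_{i_{m+1}}\rangle=0$. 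Telescoping over the chain yields $\langle s_0,e_j-e_k\rangle=0$. Since $e_j-e_k\in\mathrm{or}(F)$ by the same membership remark, $\partial_{e_j-e_k}h|_F=0$.

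\emph{Main obstacle.} The delicate point is the identification $\mathrm{or}(F)=\mathrm{or}(F_f)+\mathrm{or}(F_g)$ together with the membership of block differences in orientations. The first is what lets (J1) hold for faces of every dimension; the second is what makes the chain argument legitimate, since the chain itself is the lattice lemma. I would check the identification carefully via the affine hull of the lifted Minkowski sum and its projection along $e_z$, which is injective on graph faces.
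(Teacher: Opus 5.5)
Your proposal is correct and follows the paper's proof essentially line for line. It carries over the epigraph sum, face additivity at a common exposing $(s_0,-1)$ and the exposure of every graph face from the two-dimensional lemma, obtains ($\mathrm J1$) from $\mathrm{or}(F_h)=\mathrm{or}(F_f)+\mathrm{or}(F_g)$ together with the lattice lemma (c), and transports ($\mathrm J2$) by the telescoping chain. The two points you flag are used only implicitly in the paper: that the orientation of a Minkowski sum of graph faces is the sum of the orientations, and that $e_j-e_k$ lies in the orientation whenever $\{j,k\}$ is a block pair. Your justifications of both are sound.
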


\begin{proof}
Exactness, $\operatorname{epi}h=\operatorname{epi}f+\operatorname{epi}g$,
face additivity $F_h=F_f+F_g$ at a common exposing $(s_0,-1)$, and the
exposure of every graph face by such a vector are the proofs of
\cref{app:stat-infconv}(a)--(c), written there for polyhedra in
$\R^{d+1}$ and unchanged. ($\mathrm J1$):
$\mathrm{or}(F_h)=\mathrm{or}(F_f)+\mathrm{or}(F_g)\in\mathbb L_d$ by
\cref{app:statd-lattice}(c). ($\mathrm J2$), \emph{the telescoping chain}:
on a face exposed by $s_0$ the slope of the object along any direction
$\delta$ of its orientation is $\langle s_0,\delta\rangle$. Let $\{j,k\}$
lie in a block of $\mathrm{or}(F_h)$. By \cref{app:statd-lattice}(c) there
is a chain $j=m_0,\dots,m_r=k$ whose every link is a pair in a block of
$\mathrm{or}(F_f)$ or of $\mathrm{or}(F_g)$; for each link, ($\mathrm J2$)
of the contributing operand gives
$0=\partial_{e_{m_t}-e_{m_{t+1}}}(\cdot)|_{F}=
\langle s_0,e_{m_t}-e_{m_{t+1}}\rangle$, i.e.\
$s_{0,m_t}=s_{0,m_{t+1}}$; telescoping, $s_{0,j}=s_{0,k}$ and
$\partial_{e_j-e_k}h|_{F_h}=0$. (The two-dimensional argument of
\cref{app:stat-infconv} is the case of chains of length one at the level of
edges; the chain is needed here because \cref{app:statd-restrict} consumes
$2$-faces.)
\end{proof}

\paragraph{Proof of \cref{thm:walls-d}.}
Hypotheses (i)--(v) above are the standing hypotheses of the theorem as
stated in \cref{subsec:pair-ddim}.
The initial objects satisfy $\mathbf J_d$ (a point, or the level box:
coordinate orientations, no blocks); all domains stay compact and
non-empty (hypothesis (iii)). By induction $\mathbf J_d$ passes through
each level half-space (\cref{app:statd-restrict}) and through the
inf-convolution with the hour's gadget
(\cref{app:statd-gadget,app:statd-infconv}; the backward reflection is
absorbed by \cref{app:statd-gadget}). \Cref{app:statd-read} --- under
hypothesis (iv) --- translates $\mathbf J_d$ into (a)--(b), the direction
classification being \cref{app:statd-lattice}(e) and the zero offset of the
root walls being ($\mathrm J2$) applied to the edge itself.\hfill$\square$

\paragraph{Proof of \cref{cor:arrangement}, and what it does not say.}
The subdivision is a sub-complex of the stated arrangement because, by
\cref{thm:walls-d}, every wall it realizes is one of that arrangement's
hyperplanes; the face bound is the standard count for an arrangement of $N$
hyperplanes in $\R^d$. The corollary bounds the carried \emph{size}, not the
work of one step: an inf-convolution of two subdivisions with $F$ faces may
generate up to $F^2$ candidate pairs before pruning. Both operands being
sub-complexes of one arrangement of fixed directions, their non-empty
intersections number $O(N^d)$ themselves, which an arrangement-guided
implementation can traverse without enumerating pairs --- but that is a
statement about implementations (the reference implementation clips
pairwise), not a theorem.

\begin{remark}[Inflows, and the seam with the two-store theorem]
\label{app:statd-seam}
Inflows translate the state and move no wall, exactly as in
\cref{app:stat-inflows}. At $d=2$ the present theorem recovers the lossless
case of \cref{thm:directions} --- the single root $e_1-e_2$ is the diagonal
ray $\{s_1=s_2\}$ --- while \emph{lifting} the monotonicity of the hourly
curves, which \cref{app:stat} needs because it covers genuinely kinked load
maps; conversely, losses and kinks at $d=2$ are covered only by
\cref{thm:directions}. Neither statement subsumes the other, and the
lossless two-store block with non-monotone curves was covered by neither
before this appendix. The lifted hypothesis is not decorative: with genuine
losses, a decreasing segment of $\psi_t$ --- negative marginal prices
without free curtailment --- makes simultaneous charging and discharging
profitable and the hourly gadget itself non-convex, a boundary of the convex
method and not of this proof; the common-coefficient regime is exactly the
one in which negative prices cost the convexity nothing.
\end{remark}

As corroboration, two independent instruments were run over this appendix's
scope. The referee's exact fuzz --- $33$ instances with non-monotone hourly
curves and varied in-scope coefficients, $410$ realized root walls --- found
every wall in families (a)--(b) with exactly zero offset on the roots. An
exact $d=3$ prototype of the recursion itself --- rational arithmetic
throughout, a global max-of-affine representation with no viewport --- then
replayed \eqref{app:statd-eq-step} against an independent, duality-certified
rational-simplex judge: over $100$ feasible instances, $1{,}600$ parametric
probes of the terminal value agree exactly (zero discrepancy, and the two
sides agree on every infeasible instance), and a census of $4{,}628$
realized walls across $238$ carried objects finds axes and roots only,
every root wall with exactly zero offset. On the same instances the carried
face count stays at least two orders of magnitude below the bound of
\cref{cor:arrangement} ($F\le29$ against $N^{3}\ge5{,}832$ at $d=3$); the
measured range is too narrow to fit an exponent and is reported as a
measurement, not a law. Prototype, campaigns and audits are
\texttt{stationarity\_exact/} in the \texttt{prototypes} repository
(\cref{sec:package}).

\paragraph{The reference implementations, measured.}
The remark closing \cref{cor:arrangement} --- that the work of one step is a
statement about implementations, not a theorem --- has been instantiated.
Three reference kernels were built in succession, each gated on its
predecessor before any timing: the exact rational prototype above (no
timing, by principle), a floating-point active-set port checked against it,
and a generic-$d$ C++ kernel (one template, instantiated $d=2..6$, floating
point contraction disabled) whose clamp generates its crossing candidates
in dimension $d{-}1$ on the bound hyperplane --- output byte-identical to
the full $O(F^2)$ generation across \ddimByteGateCells{} gate cells, with
the candidate count divided by \ddimGenReductionLow{} to
\ddimGenReductionHigh{}. On park-catalogue instances the realised face
count stays bounded through $d=5$: $F_{\max}=$ \ddimFmaxDTwo{},
\ddimFmaxDThree{}, \ddimFmaxDFour{}, \ddimFmaxDFive{} at three slopes for
$d=2,3,4,5$, a per-dimension amplification that \emph{decreases}
(\ddimAmpTwoThree{}, \ddimAmpThreeFour{}, \ddimAmpFourFive{}) --- far below
the bound of \cref{cor:arrangement} at every $d$. Measured against a frontal
LP outside any speed claim of this paper, the kernel answers the annual
$d=2$ instances around parity with the frontal --- ahead by up to
\ddimDTwoYearBSBest{} with model build included, behind at worst
\ddimDTwoYearBSWorst{} --- and trails it from $d=3$ on (solve ratios
\ddimDGeqThreeSolveLow{} to \ddimDGeqThreeSolveHigh{}); every timed
instance matches the integer LP optimum exactly (\ddimGateBCells{} cells).
The cost is the pruning of \emph{constructed} candidates, not the carried
object, and the remaining wall is engineering with a name --- the
degenerate on-face sorting of the $(d{-}1)$-dimensional candidate set ---
recorded, with the three campaigns, under \texttt{diagnostics/} in the
benchmark repository (\cref{sec:package}).

\section{Coordination of coupled storage blocks, and the system-scale study}
\label{app:coordination}

This paper's object is the exact value function, and it has a boundary: a
coupled group holding more than one intertemporal unit leaves it. Past it the
composition of \cref{sec:coordination} is a coordination scheme --- iterates
feasible with a measured gap, not globally optimal --- and a certified joint
block removes the lock exactly on small groups, buying correctness, not speed.
What the decomposition holds against a frontal barrier is memory and exactness,
not time; read every measurement below in that light. They are recorded so that
the boundary is documented rather than asserted.

\subsection{Block-coordinate alternation and its lock}

With several units per group each curve is conditional on the others'
schedules, so a sweep is block-coordinate descent on a convex but
\emph{nonsmooth} objective, whose coordinatewise optima need not be
global~\cite{Tseng2001}: co-located high-power devices share a merit-order kink
neither can cross alone. \Cref{prop:one-storage} is the exact case. Beyond it
comes the multi-storage stall (\cref{subsec:alternation}): devices at the same
or tightly connected nodes leave plain alternation several percent above the
monolith, and refining the spatial curves does not remove that floor, so the
gaps in \cref{tab:annual-comparison} are coordination gaps, not oracle errors.
The residual is a lock, not slow convergence --- \cref{subsec:ramps-bench}
exhibits a plateau, not a decreasing sequence --- and \cref{subsec:masters}
removes it with a certified gap.

Vocabulary: spatial curves and FLT subproblems are \emph{exact}; the
one-storage composition is \emph{globally exact}; a multi-storage alternating
iterate is \emph{feasible with a measured gap to the reference monolith}; a
master is exact only when its bound and convergence argument cover the
implemented algorithm.

\subsection{Masters for coupled storage blocks}
\label{subsec:masters}

When a group locks, refine the \emph{block}, not the curves: group the units
coupling through one kink and re-optimise them together. At one node a
two-sided parametric trace along the combined-injection ray gives the
merit-order epigraph, over which one small auxiliary LP returns the jointly
optimal split; across a line the hourly value is multi-dimensional and no
single ray traces it, so a cutting-plane master fed with exact tangent planes
assembles it and stops at a certified relative gap, its reported cost always
the exactly re-priced upper bound. Either way one exact trace plus one small LP
per coupled group keeps the method frugal, with no node-by-hour monolith, and a
genuine floor survives only at a precise eligibility boundary decided from the
topology before any solve (\cref{subsec:ramps-bench}).

The cross-node certificate reads too easily as a scaling claim, so it deserves
an honest bound. Two things grow with the group's connectivity: the master
carries every intertemporal unit over the whole horizon --- removing the
network takes out about a tenth of the problem, so its \emph{base} approaches
the monolith it replaces --- and the rounds closing the certificate grow with
the dimension of the hourly value the planes assemble. On three zones it closes
in \crossThreeZoneRounds{} rounds at parity with the same monolith and solver;
on eight zones the gap still falls geometrically but the master lands orders of
magnitude behind it (\cref{subsec:ramps-bench}), structurally rather than
numerically: the planes are hourly and their slopes do \emph{not} repeat
usefully across the year at that size. \emph{The joint block buys exactness and
a certificate, not speed}, and belongs on small groups; for two stores at one
node \cref{sec:pair} \emph{eliminates} the coupling instead, and
\cref{rem:two-store-scope} bounds where this master remains the general remedy.

\subsection{Working memory of the decomposition}

The decomposition never forms the full node-by-hour monolith: its working
memory is hourly spatial models, scalar traces and temporal value functions ---
what the sizing oracles of \cref{sec:investment} rely on, and what survives to
the sizes at which the monolith is unaffordable. The ramp-free backend streams
each trace into the dynamic-programming buffers and reuses the committed trace
for exact objective monitoring (\cref{sec:package}). Two optional parallel axes
--- hour-chunked curve construction, concurrent Jacobi storage updates ---
change wall time only: schedule, objective and feasibility gap are identical to
the single-threaded run, so parallelism is a deployment convenience, not part
of the method. Its campaign ran on a different machine from every other
measurement here and is therefore reported in the benchmark repository
(\cref{sec:package}) rather than in this paper's tables, where it would
compare to nothing.

\subsection{Annual multi-node electricity systems}
\label{subsec:multinode-bench}

\begin{landscape}
\begin{table}[p]
\centering
\caption{Annual TYNDP benchmark ($T=8{,}760$). FLT denotes the isolated one-node temporal solve and FLT coordination the complete curve-based method. From three nodes onward the timed row measures the coordination layer --- the traced injection-cost curves and the alternating one-dimensional solves --- which consumes the FLT temporal solve as its oracle; the one-node column is that FLT temporal solve in isolation. All three methods are timed end to end (build plus solve), both frontal solvers' solver-only time in parentheses. A dash marks a measurement that does not apply to that column. Ramp-constrained fleets are reported separately at operation scale in \cref{subsec:ramps-bench}; their re-gating at TYNDP scale with the joint cross block is left to future work. (\pkg{} 0.27.0, median of 3, machine \texttt{cyrius-2.home}.)}
\label{tab:annual-comparison}
\resizebox{\linewidth}{!}{%
\begin{tabular}{lrrrrrr}
\toprule
 & 1 node & 3 nodes & 8 nodes & 10 nodes & 30 nodes & multi-energy \\
\midrule
data & TYNDP--FR & TYNDP & TYNDP & TYNDP & TYNDP & TYNDP \\
horizon $T$ & 8,760 & 8,760 & 8,760 & 8,760 & 8,760 & 8,760 \\
method & FLT & coordination & coordination & coordination & coordination & coordination \\
\midrule
Gurobi build+solve (solve) [s] & 0.53 (0.19) & 1.13 (0.42) & 3.80 (1.60) & 5.58 (2.50) & 16.35 (8.32) & \textbf{2.17} (1.55) \\
HiGHS build+solve (solve) [s] & 0.57 (0.43) & 2.32 (1.55) & 30.78 (13.64) & 55.62 (16.53) & 195.38 (40.74) & 15.62 (8.32) \\
FLT / coordination wall [s] & \textbf{0.03} & \textbf{0.20} & \textbf{2.54} & \textbf{3.15} & \textbf{15.23} & 2.45 \\
Gurobi / FLT time & $\mathbf{16.1\times}$ & $\mathbf{5.62\times}$ & $\mathbf{1.50\times}$ & $\mathbf{1.77\times}$ & $\mathbf{1.07\times}$ & $0.885\times$ \\
HiGHS / FLT time & $17.3\times$ & $11.5\times$ & $12.1\times$ & $17.6\times$ & $12.8\times$ & $6.38\times$ \\
\midrule
Gurobi peak RSS [MB] & 216 & 478 & 1,197 & 1,566 & 3,673 & 680 \\
HiGHS peak RSS [MB] & 200 & 414 & 991 & 1,457 & 3,256 & 884 \\
FLT / coordination RSS [MB] & \textbf{59} & \textbf{133} & \textbf{321} & \textbf{264} & \textbf{550} & \textbf{300} \\
\midrule
objective gap to Gurobi [\%] & $-0.0000$ & $+0.0026$ & $+0.0459$ & $+0.0400$ & $+0.1045$ & $+0.0793$ \\
power/state violation & 0 & 0 & 0 & 0 & 0 & 0 \\
additional certificate & exact & -- & -- & -- & -- & -- \\
\bottomrule
\end{tabular}}
\end{table}

\end{landscape}

On the 3-, 8- and 10-node systems the single-storage advantage survives as a
cheap oracle, but coordination rather than the longitudinal solve sets total
runtime as the network grows. Against Gurobi's bare solver time (parenthetical
column) the frontal barrier is the faster beyond a few nodes; the end-to-end
ratios of \cref{app:campaigns} ($\annualThreeNodeGurobiSpeedup\times$,
$\annualEightNodeGurobiSpeedup\times$ and $\annualTenNodeGurobiSpeedup\times$
at 3, 8 and 10 nodes) are favourable only because the frontal model's build
dominates those runs, and we read no method property into them. What the
decomposition holds at every size is memory ---
\annualMemFactorThreeToTenRange{}$\times$ less --- at best feasible gaps at or
below $\annualThreeToTenGapMax$ with zero physical violations. No time ratio
against HiGHS is quoted: on identical code and models its wall clock varied by
factors of two to five across machines and solver builds in our campaigns, so
it serves as an objective cross-check and a memory point, not a timing
baseline.

The 30-node column is not synthetic: annual TYNDP profiles and capacities from
the capacity-expansion solution over thirty European zones, seventeen of which
retain a significant reservoir (list in the reproducibility package). On the
local machine the decomposition takes $\annualThirtyNodeFltWall$~s against
$\annualThirtyNodeGurobiWall$~s for a frontal Gurobi build-and-solve and
$\annualThirtyNodeHighsWall$~s for HiGHS on the same sparse monolith, returning
a feasible $\annualThirtyNodeGap$ solution. The end-to-end near-parity with
Gurobi ($\annualThirtyNodeGurobiSpeedup\times$) is a build artefact, not a
solver result: against the bare Gurobi solve the decomposition is roughly half
as fast at this size. Memory separates them more sharply:
$\annualThirtyNodeFltRss$~MB against $\annualThirtyNodeHighsRss$~MB for HiGHS
($\annualThirtyNodeHighsMemFactor\times$) and $\annualThirtyNodeGurobiRss$~MB
for Gurobi ($\annualThirtyNodeGurobiMemFactor\times$). Its 30 sweeps are still
descending, so this point is a fixed-budget comparison, not evidence of a
terminal oscillation.

\subsection{Multi-energy: coupled electricity--hydrogen dispatch}
\label{subsec:multienergy-bench}

The coupled electricity--hydrogen instance exercises sparse multi-commodity
directions in \cref{alg:trace}: a "node" is an (area, resource) pair, and a
hydrogen store couples its hydrogen and electricity balances through a
parasitic charging load. In three fresh local processes, directional traces,
the monotonicity guard and hour-level parallelism return a physically feasible
schedule in $\annualMultiEnergyFltWall$~s at the median, against
$\annualMultiEnergyGurobiWall$~s (Gurobi) and $\annualMultiEnergyHighsWall$~s
(HiGHS), at an empirical $\annualMultiEnergyGap$ gap. Gurobi is faster on this
small, quickly built model; \pkg{} uses
\annualMultiEnergyMemFactorRange{}$\times$ less peak memory
($\annualMultiEnergyFltRss$~MB versus $\annualMultiEnergyGurobiRss$ and
$\annualMultiEnergyHighsRss$~MB) and is $\annualMultiEnergyHighsSpeedup\times$
faster than HiGHS. The result is a high-quality feasible solution, not an
equally certified optimum.

\paragraph{An 8-node coupled system with demand management, from \texttt{pommes}.}
The case study scales the coupling to eight areas (FR, DE, BE, NL, CH, AT, NO,
SE) with demand-side management: \texttt{pommes}~\cite{pommes} builds the
ENTSO-E TYNDP ``Global Ambition'' 2050 operation case, \pkg{} slices it to a
plain multi-resource instance --- eight electricity nodes, seven hydrogen
nodes, $\mrEightStorages$ storages (hydro reservoirs, batteries, pumped hydro,
hydrogen salt caverns and tanks) --- and the same coupled decomposition solves
it. Demand flexibility is one shiftable-load band per electricity node, at
$\mrEightDsmRho\%$ of mean demand and \emph{energy-conserved over daily
windows}: each band is a storage whose cumulative state is pinned back to its
mid-point at every window boundary (the per-step level pins of
\cref{sec:package}). The returned schedule is feasible at a $+\mrEightGap\%$
coordination gap, with no power violation, a state violation of
\mrEightLevelViolation{}~MWh, and pins holding to \mrEightDsmPinError{}~MWh
over $\mrEightDsmWindows$ windows and $\mrEightDsmBands$ bands --- demand
management thus enters the exact longitudinal oracle as pure interval data. On
the complete storage fleet the flexibility is worth $\mrEightDsmValue\%$ of
system cost (roughly half generation-cost arbitrage, half avoided peak
scarcity), not inflated by a missing short-term fleet since batteries and
pumped hydro are present. The residual gap is the coupled-storage coordination
floor of \cref{subsec:alternation} --- the caverns' parasitic-charge kink ---
not a limitation of the demand-management encoding.

Trace-based objective monitoring is quantified here too: evaluating each
sweep's objective by 8,760 independent LPs cost about 33.5 seconds per sweep,
whereas reusing the committed exact trace (\cref{sec:package}) reduces it to
0.04 seconds, a same-driver A/B run giving 57.87 versus 10.24 seconds end to
end ($5.65\times$). An audit at every identical committed state bounds the
relative difference between the two evaluations by $1.3\,10^{-15}$; the earlier
monitor also shared solver state with the next sweep, a measurement failure
mode recorded in \cref{app:campaigns}.

\subsection{Ramp constraints and coupled-storage coordination}
\label{subsec:ramps-bench}

A ramp-constrained fleet is itself a longitudinal chain
(\cref{subsec:exclusions}), but coupled to a storage it triggers the
block-coordinate lock. We measure the lock and its removal by the joint block
of \cref{subsec:masters} on a small annual operation case
(\texttt{op\_ramp\_simple}, $T=8{,}760$, one laptop; the release is stated in
the caption of \cref{tab:ramp-simple}): a
four-node ring, node~3 carrying the demand, a co-located volatile renewable, a
battery ($200$\,MW\,/\,$1500$\,MWh) and a mid-merit ramp fleet ($2000$\,MW,
$R=40$\,MW/h). The renewable swings the residual load fast, so the ramp binds
($\max_t|\Delta g_t|=R$). Every gap is measured against an independent Gurobi
monolith replicating the multi-resource formulation exactly (equal to HiGHS to
$\sim\!10^{-15}$).

\Cref{tab:ramp-simple} has three steps. Storage only, the decomposition is
globally exact and solves in $\rampStorageOnlyWall$\,s, ahead of the Gurobi
monolith ($\rampGurobiWall$\,s) and HiGHS ($\rampHighsWall$\,s) ---
\cref{prop:one-storage} at operation scale. Adding the ramp fleet at the
\emph{same} node, a plain Frank--Wolfe coordination of the two chains loses
that lead entirely: it plateaus at \rampFwGap{} above the monolith in
$\rampFwWall$\,s, the pure block-coordinate lock (each chain optimal given the
other, their split of the node aggregate wrong), the iterates no longer moving.
The joint block restores exactness --- one two-sided trace and one small
epigraph LP return the exact optimum, at most \rampJointGap{} relative, in
$\rampJointWall$\,s; at these sub-second scales the wall clocks are
build-dominated and we read no speed ranking into them.

\begin{table}[t]
\centering
\caption{Ramp fleet coupled to storage at one node (\texttt{op\_ramp\_simple}, $T=8{,}760$, one laptop). Wall seconds and relative gap to an independent Gurobi monolith (equal to HiGHS to $\sim\!6.4\,10^{-16}$). The decomposition rows measure the cross-block coordination --- the fleet/storage alternation lock and its removal by the joint block --- with the FLT temporal solve as the oracle inside each block; the two monolith rows solve the same instance frontally. (\pkg{} 0.27.0.)}
\label{tab:ramp-simple}
\begin{tabular}{lrl}
\toprule
method & wall [s] & gap to monolith \\
\midrule
storage only, decomposition & \textbf{0.03} & exact ($\le2.6\,10^{-15}$) \\
\quad + ramp fleet, plain Frank--Wolfe & 9.1 & $+1.05\%$ \\
\quad + ramp fleet, joint block & \textbf{0.37} & exact ($\le1.1\,10^{-14}$) \\
\midrule
same instance, Gurobi monolith & 0.56 & (reference) \\
same instance, HiGHS monolith & 1.7 & (reference) \\
\bottomrule
\end{tabular}
\end{table}

The refinement generalises off the single node. With fleet and storage at
\emph{different} nodes joined by a line, plain alternation locks at about
$+1.0\%$ almost independently of line capacity: loose lines make the network a
copper plate with the same merit-order kink, while congestion only makes the
joint hourly value two-dimensional and rules out a single-ray trace. The
certified cross-group then returns the exact optimum ($\sim\!10^{-12}$
relative) in $0.7$--$1.0$\,s with $6$--$8$ Kelley planes. The same machinery
closes the multi-storage floor without any fleet: two co-located storages of
\emph{different} duration ($2$\,h and $12$\,h) leave plain alternation at
$+0.027\%$ after $8.1$\,s, and the joint block returns the exact optimum
($\sim\!10^{-13}$) in one provable sweep of $0.60$\,s; two network-connected
storages close exactly ($\sim\!10^{-13}$) by cross-group in about $0.7$\,s.
Equal-duration co-located storages need no refinement: their aggregate is one
storage with summed energy and power.

\paragraph{Coordinating the pair, against eliminating it.}
The smallest group this block handles --- two co-located stores of different
duration at one node --- is exactly the one \cref{sec:pair} eliminates
outright, by carrying the value function of the pair instead of coordinating
the two. On the identical instance of \cref{tab:two-store-block}, the
conjugate recursion is \twoStoreBlockSpeedupRange$\times$ faster than this
block, the spread between the two ends being the losses, which split each
pencil of the arrangement and multiply its face count. It also returns
$V^\star_0,\dots,V^\star_{T-1}$ at
$\twoStoreFamilyMbLossless$--$\twoStoreFamilyMbLossy$\,\si{\mega\byte}, which
an LP block cannot produce at any price. That is the sharpest form of the
boundary this appendix documents: where a coupled block is small enough to
eliminate, eliminating dominates coordinating on every axis, and the joint
block earns its place only on the groups that cannot be eliminated --- three
or more stores with distinct coefficients, or units spread across a line
(\cref{rem:two-store-scope}).

\paragraph{Which units a block can absorb.}
Eligibility is decided before any solve, from the topology alone, and three
classes fall outside it. A storage whose load does not lie along a single
traced direction --- an H2 store with a parasitic electricity draw, contributing
to two balances at once --- has no injection ray for the trace to follow. A
lossy storage at a node without curtailment makes the hourly cost non-monotone
in the injection, so the traced curve is no longer the storage's own cost. A
level-pinned storage is not free over the horizon the block optimises. Such
units stay on the per-unit path and the run is a hybrid, to be reported as one:
on the eight-zone system above, $7$ of $29$ storages are ineligible for the
first reason, so the ``joint block'' spans $22$ units and not the system.

\paragraph{Where the certified cross-group stops being worth its cost.}
The Kelley planes above close in single digits because the group is small;
outside that regime the bound of \cref{subsec:masters} is reached quickly. On a
real eight-zone system with \crossEightZoneUnits{} intertemporal units in one
connected group the certificate does still close --- the gap falls by a factor
\crossEightZoneGapFactor{} per round --- but needs of the order of
\crossEightZoneRoundsExtrap{} rounds, each solving a master grown by one plane
per hour whose base already carries every unit over the whole year, where the
monolith answers the same instance in \crossEightZoneMonolithSeconds\,s. Nor is
there hidden structure to exploit: the hourly tangent slopes were still
accumulating at \crossEightZoneNewSlopesRound{} new ones per round after
\crossEightZoneRoundsProbed{} rounds, \crossEightZoneSeenOnceShare{} of them
occurring at a single hour of the year, so the planes cannot be shared across
time. The joint block is therefore an \emph{exactness} device with a
certificate, on groups small enough for it to close --- not a way to make large
coupled systems faster.

\section{Parametric-simplex engineering}
\label{app:simplex}

This appendix records the engineering behind \cref{alg:trace} and
\cref{subsec:curve-engineering}, at the level of detail needed to reproduce
the kernel measurements.

\paragraph{Basis atlas and predictor.}
The implementation retains one common-dispatch basis per hour and the
storage--hour anchor bases produced in the preceding sweep. A predictor
chooses among that spatial atlas, the previous storage-specific anchor and
the basis carried longitudinally from hour $t-1$; the dual simplex then
certifies or repairs the choice before retracing the complete curve. This
reuse changes only the starting active set, never the curve being computed.
On the annual 8-node profile, the traced curves average $2.35$ affine pieces
per hour.

\paragraph{Where native simplex time goes.}
A pass-by-pass profile of the complete 30-sweep 8-node run assigns $58.3\%$
of native simplex work to the trace itself and $30.0\%$ to anchor
reoptimisation (inclusive). Maintaining the basic primal vector
incrementally reduces the complete run by $4.5\%$ with one thread and
$2.7\%$ with eight, with identical gap trajectories; basis refactorisation
accounts for only $6.0\%$ of native work. A dense reduced-cost cache was
slower ($+1.9$--$5.6\%$), while incrementally maintaining only the dual
vector was neutral ($+0.04\%$ at eight threads); both variants were removed.
The retained end-to-end lever instead fuses trace buffers directly into the
flat DP, reducing the annual 8-node eight-thread wall by $17.9\%$.

\section{Campaigns, versions and measurement conventions}
\label{app:campaigns}

\paragraph{Campaigns.}
Every table in this paper is emitted by a generator that reads one archived
result file, and each caption names the release and the machine that produced
it; this appendix records only what those captions share. The three-solver,
kernel, two-stage, catalogue-law and application rows come from unified
campaigns against the installed \pkg{} \pkgCampaignVersion{} release,
measured on an 8-thread macOS laptop; the parametric-reuse, joint-sizing and
ramp tables were re-measured at the same release on 2026-08-17 (the
joint-sizing rows moving from single runs to medians of three), and the
two-store block keeps its $0.23.0$ harness output together with a
reproduction control at the current release stated in its caption --- each
caption names its release, and any residual provenance mixing is visible
rather than implied. A small number of narrative figures in
\cref{sec:injection,sec:experiments} and \cref{app:simplex} quote one-shot
research-harness runs that predate the campaign contract (a profiling
breakdown, two small cross-group cases, a monitoring A/B); they carry no
headline claim, and they are recognisable by the absence of a generated
macro. The parallel-axes campaign ran on a different server
(Calcul-Planeterr, Xeon Gold 6354) and is recorded in the supplementary
material rather than in this paper's tables, precisely because cross-machine
rows compare to nothing else here.

Two qualifications, both stated in the captions concerned rather than inherited
from this paragraph. The joint-sizing measurements of \cref{subsec:joint-bundle}
are research-harness runs --- single runs rather than repeated medians --- so their
sub-second wall times carry a few percent, and the ranges derived from them are
rounded outward.
Re-running the suite therefore relabels its own tables, and a measurement that
no archived campaign supports is printed as a visible gap rather than a
plausible number.

\paragraph{Frontal references and timing conventions.}
The frontal references use HiGHS 1.14.0 (1.15.1 on the server) and Gurobi
13.0.2 on the natural sparse state-of-charge formulations. A common driver
(\texttt{benchmark\_common.py}) measures all three methods with identical
end-to-end accounting --- model build plus solve, excluding only the shared case
load, median of three repetitions, one fresh process per measurement for a clean
peak-memory reading. This removes the earlier asymmetry (Gurobi timed
solve-only, HiGHS build+solve, the decomposition end-to-end) that had favoured
Gurobi in the time rows; the Gurobi solver-only time is retained in parentheses
on the Gurobi row of \cref{tab:annual-comparison} for continuity.

\paragraph{Objective monitoring.}
The decomposition evaluates each sweep's objective from the committed exact
trace (\cref{sec:package}); optional direct checks use a separate spatial
solver so that monitoring cannot perturb the traced bases that seed the next
sweep. This avoids a subtle measurement failure mode, in which a direct
monitor sharing the seeding basis could change the subsequent simplex path.
The audit of \cref{subsec:multienergy-bench} compares the two evaluations at
every identical committed state and bounds their relative difference by
$1.3\,10^{-15}$.

\end{document}